\newtheorem{thm}{Theorem}[section]
\newtheorem{prop}[thm]{Proposition}
\newtheorem{lem}[thm]{Lemma}
\newtheorem{cor}[thm]{Corollary}
\newtheorem*{thmS}{Straightening Theorem}
\theoremstyle{definition}
\newtheorem{dfn}[thm]{Definition}
\def\0{\emptyset}
  \def\Cc{\mathcal{C}}
\def\Fc{\mathcal{F}}   \def\Mc{\mathcal{M}}
\def\Pc{\mathcal{P}}   
\def\Uc{\mathcal{U}} \def\Vc{\mathcal{V}}
\def\Uf{\mathfrak{U}} \def\Vf{\mathfrak{V}} 
  \def\Nf{\mathfrak{N}}
\def\Z{\mathbb{Z}}
\def\D{\mathbb{D}}
\def\C{\mathbb{C}}
\def\R{\mathbb{R}}
\renewcommand\emptyset{\varnothing}
\newcommand{\sm}{\setminus}
\def\eps{\varepsilon}
\def\ol{\overline}
\def\si{\sigma}  \def\ta{\theta}  
\def\al{\alpha}  \def\be{\beta}  \def\la{\lambda} \def\ga{\gamma}
\def\om{\omega}
\def\vp{\varphi}
\def\le{\leqslant}
\def\ge{\geqslant}
\def\wt{\widetilde}
\def\wh{\widehat}
\def\arg{\mathrm{arg}}
\def\en{\mathrm{end}}
\def\dia{\mathrm{diam}}
\def\imr{\mathrm{ImR}}
\def\inn{\mathrm{Int}}
\def\cuc{\mathcal{CU}}
\def\ch{\mathrm{CH}}
\def\uc{\mathbb{S}^1}
\def\bd{\mathrm{Bd}}
\def\<{\langle}
\def\>{\rangle}
\def\phd{\mathrm{PHD}}
\def\cdisk{\overline{\mathbb{D}}}
\def\thu{\mathrm{Th}}
\def\cont{\mathrm{Shad}}
\begin{document}
\date{\today}

\title[Immediate renormalization]
{Immediate renormalization of cubic complex
polynomials with empty rational lamination}

\dedicatory{Dedicated to Yulij Sergeevich Ilyashenko's 80th birthday}

\author[A.~Blokh]{Alexander~Blokh}

\author[L.~Oversteegen]{Lex Oversteegen}

\thanks{The second named author was partially  supported
by NSF grant DMS-1807558}

\author[V.~Timorin]{Vladlen~Timorin}

\thanks{The study has been funded within the framework of the HSE University Basic Research Program.}

\address[Alexander~Blokh and Lex~Oversteegen]
{Department of Mathematics\\ University of Alabama at Birmingham\\
Birmingham, AL 35294-1170}

\address[Vladlen~Timorin]
{Faculty of Mathematics\\
HSE University\\
6 Usacheva St., 119048 Moscow, Russia
}

\address[Vladlen~Timorin]
{Independent University of Moscow\\
Bolshoy Vlasyevskiy Pereulok 11, 119002 Moscow, Russia}

\email[Alexander~Blokh]{ablokh@math.uab.edu}
\email[Lex~Oversteegen]{overstee@math.uab.edu}
\email[Vladlen~Timorin]{vtimorin@hse.ru}

\subjclass[2010]{Primary 37F20; Secondary 37C25, 37F10, 37F50}

\keywords{Complex dynamics; Julia set; Mandelbrot set}

\begin{abstract}
A cubic polynomial $P$ with a non-repelling fixed point $b$
is said to be \emph{immediately renormalizable} if there exists
a (connected) QL invariant filled Julia set $K^*$ such that $b\in K^*$.
In that case, exactly one critical point of $P$ does not
belong to $K^*$. We show that if, in addition, the Julia set of $P$ has no
(pre)periodic cutpoints, then this critical point is recurrent.

\end{abstract}

\maketitle

\section{Introduction}

In the introduction, we assume knowledge of basics of complex dynamics.
Let $P$ be a monic non-linear polynomial with connected Julia sets $J(P)$.
An (external) ray of $P$ with a rational argument always lands at a point that is eventually
mapped to a repelling or parabolic periodic point. If two
external rays like that land at a point $x\in J(P)$, then
such rays are said to form a \emph{rational cut (at $x$)}. The family
of all rational cuts of a polynomial $P$ may be empty (then one says
that the \emph{rational lamination} of $P$ is empty); if it is
non-empty it provides a combinatorial tool allowing one to study
properties of $P$ even in the presence of Cremer or Siegel periodic points.

Consider quadratic polynomials with connected Julia set. It is known
 that any quadratic polynomial not in the closure of the quadratic \emph{Principal Hyperbolic Domain}
 (represented in the $c$-plane of polynomials $z^2+c$ by the interior and the boundary
 of the filled \emph{Main Cardioid})
 has rational cuts, which allows for powerful \emph{Yoccoz puzzles} techniques.
The purpose of this paper is to investigate a similar phenomenon in the
cubic case continuing a series of interconnected articles \cite{bopt14, bopt14b, bopt16a, bot16, bot22b}
in which parameter spaces of cubic polynomials and related topics are studied.

Consider a cubic polynomial $f$ without rational cuts.
Conjecturally, such $f$ must belong to the closure of the \emph{Principal Hyperbolic Domain}.
If not, then a number of pathological properties of $f$ are known:
in particular, by \cite[Theorem 1.10]{bot22b} the Julia set of $f$ is \emph{not} locally connected,
has positive measure and carries an invariant line field.
The main result of this paper is that if such a map $f$ is immediately renormalizable
(this concept is defined in the abstract and discussed thoroughly below)
and $\omega_2$ is the critical point of $f$ that does not participate in the renormalization,
 then $\omega_2$ is recurrent.

Our research is motivated as follows.
Firstly, it is a step towards proving the above conjecture:
we discover specific properties of the polynomials without rational cuts hoping that eventually this will lead to a contradiction
 (apart from Theorem \ref{t:recur} stated below,
 these properties include Theorem \ref{t:who-land} and some lemmas given in the text).
Secondly,
we develop tools for studying polynomials without rational cuts. In our view, this is interesting
because there are few approaches to studying such polynomials. Indeed, there are no Yoccoz puzzles to study the dynamics of $f$
(in particular, one cannot use a \emph{``divide and conquer''}
approach to recurrence of individual critical points). Moreover, Kiwi's rational lamination is empty and hence laminational tools
are not available either. Thus, what we consider is perhaps one of the simplest special cases when standard methods are not applicable.
We employ usual notation; e.g., $\bd(X)$ stand for the boundary of a subset $X\subset\C$, etc.

\begin{dfn}[\cite{DH-pl}]\label{d:ql1}
A \emph{polynomial-like (PL)} map is a proper ho\-lomorphic map $f: U\to f(U)$
of degree $k>1$, where $U\Subset f(U)$ are open Jordan disks.
The \emph{filled Julia set} $K(f)$ of $f$ is the
set of points in $U$ that never leave $U$ under iteration of $f$.
Let $\bd(K(f))=J(f)$ be the \emph{Julia set} of $f$.
Call $U$ a \emph{PL neighborhood} of $K(f)$ and assume that if $f$ is given,
then its basic neighborhood is fixed.
If $k=2$, then the corresponding maps (neighborhoods) are said to be
\emph{quadratic-like (QL)} maps (neighborhoods).
\end{dfn}

We can now state our main result.

\begin{thm} \label{t:recur}
Let $f$ be a cubic polynomi\-al with empty rational lamination that has
a QL restriction with a connected QL filled Julia set $K^*(f)=K^*$. Then the critical point of
$f$ that does not belong to $K^*$ is recurrent.
\end{thm}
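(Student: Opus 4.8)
The plan is to argue by contradiction: suppose $\omega_2$, the critical point of $f$ outside $K^*$, is not recurrent, i.e.\ $\omega_2\notin\omega(\omega_2)$. First I would set up the renormalization picture carefully. Since $f$ has empty rational lamination, it lies outside the closure of the Principal Hyperbolic Domain, so by \cite[Theorem 1.10]{bot22b} $J(f)$ is not locally connected, has positive measure, and carries an invariant line field; in particular $f$ has no (pre)periodic cutpoints, $J(f)=\bd K(f)$ is connected, and $f$ has a non-repelling (Cremer or Siegel) fixed point $b\in K^*$. The other two critical points lie in $K^*$ (this is the ``exactly one critical point does not belong to $K^*$'' statement from the abstract, which I would first re-derive or cite). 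Call the QL map $g=f^{?}$ restricted to its PL neighborhood; here, since the QL filled Julia set $K^*$ has degree $2$, the restriction is of a single iterate or the map itself, and $K^*$ contains $b$ and both critical points $\omega_0,\omega_1$ of $f$ that do not equal $\omega_2$.

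Next I would exploit non-recurrence of $\omega_2$ to build a ``shrinking'' combinatorial/analytic structure around $K^*$ that the positive-measure line field must respect, deriving a contradiction. Concretely: if $\omega_2$ is non-recurrent, then $\omega(\omega_2)$ is a compact forward-invariant set not containing $\omega_2$, so there is a neighborhood $V$ of $\omega(\omega_2)$ with $\omega_2\notin\overline{\bigcup_{n\ge1}f^n(V)}$ avoiding a neighborhood of $\omega_2$; this is the standard mechanism (as in Mañé's theorem) forcing hyperbolic-type expansion along the orbit of $\omega_2$ away from $K^*$. The idea is then to thicken $K^*$ to a sequence of nested PL neighborhoods and pull back the Böttcher/Green structure on the complement: because $\omega_2$ never returns near $K^*$ in a recurrent way, the external structure of $K^*$ inside $J(f)$ can be controlled, and one should be able to produce either a (pre)periodic cutpoint of $J(f)$ — contradicting the hypothesis via the earlier Theorem \ref{t:who-land} and the empty rational lamination — or an honest polynomial-like restriction on a larger domain whose filled Julia set is strictly bigger than $K^*$ yet still misses $\omega_2$, contradicting maximality or the degree count. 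The auxiliary results quoted in the introduction (Theorem \ref{t:who-land} and the lemmas alluded to there) are presumably exactly the combinatorial inputs that convert ``a cutpoint appears'' into ``the rational lamination is non-empty.''

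A cleaner route, which I would try to make the backbone of the write-up, is to use the invariant line field directly. The line field $\mu$ is supported on a positive-measure subset of $J(f)$; by ergodic-type considerations its support is essentially all of $J(f)$, hence meets $\bd K^*$. Pulling $\mu$ back along the non-recurrent orbit of $\omega_2$ and using that near a non-recurrent critical point one has bounded distortion on deep pullbacks (no critical orbit accumulates there), one gets that $\mu$ is, on a positive-measure set, invariant under a holomorphic map with a critical point, which is impossible unless the line field is trivial there — a Lattès-type obstruction. Making this precise requires controlling the geometry of the pullbacks of a neighborhood of $K^*$ along the $\omega_2$-orbit, and showing these pullbacks are ``round enough'' (bounded eccentricity) because $\omega_2$ is non-recurrent; this is where Mañé's hyperbolicity-off-the-critical-set lemma enters.

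The main obstacle, I expect, is precisely this geometric control: in the absence of Yoccoz puzzles and with an empty rational lamination there is no ready-made combinatorial partition to organize the pullbacks of a neighborhood of $K^*$, so one must build the shrinking neighborhoods by hand and prove bounded eccentricity/distortion along the $\omega_2$-orbit using only non-recurrence plus the structure of the QL map on $K^*$ (whose own critical points are inside $K^*$ and hence harmless for the pullbacks we care about). The other delicate point is ruling out the degenerate possibility that $\omega_2$'s orbit accumulates on $\partial K^*$ itself in a non-recurrent-but-still-problematic way; here I would use that $b$ is non-repelling and that $K^*$ has no (pre)periodic cutpoints to show the only accumulation on $\partial K^*$ would force a rational cut of $f$, contradicting empty rational lamination. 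Once the bounded-distortion estimate is in place, the contradiction with either the invariant line field or with the non-existence of a larger QL restriction should follow fairly formally.
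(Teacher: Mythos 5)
There is a genuine gap, and in fact your preferred ``cleaner route'' runs in exactly the wrong direction. You propose to get a contradiction by showing the invariant line field on $J(f)$ cannot survive pullbacks along the non-recurrent orbit of $\omega_2$ (a ``Lattès-type obstruction''). But the reference you cite, \cite[Theorem 1.10]{bot22b}, asserts precisely that these Julia sets \emph{do} carry an invariant line field; that is one of the pathologies, not a contradiction-in-waiting. There is no theorem forbidding an invariant line field near a non-recurrent critical point, and the No Invariant Line Fields conjecture is open in this setting. So the ``backbone'' you want to build does not support weight. (A smaller but telling slip: you speak of three critical points $\omega_0,\omega_1,\omega_2$ of a cubic polynomial and put two of them in $K^*$; a cubic has only two critical points, one inside $K^*$ and one outside.)

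Your first, sketchier route is closer in spirit to the paper's argument, but it stops exactly where the real work begins. You correctly anticipate invoking Ma\~n\'e's theorem and aiming to produce a (pre)periodic cutpoint contradicting empty rational lamination, and you correctly guess that Theorem~\ref{t:who-land} is an input. What is missing is any mechanism for actually manufacturing that cut. The paper's mechanism is combinatorial and quite specific: collapse $K^*$ and its grand orbit (Moore's theorem) to analyze \emph{decorations} of $K$ rel.~$K^*$ and their \emph{quadratic arguments}; bring in the invariant quadratic gap $\Uf$ and the semiconjugating map $\tau$; use Ma\~n\'e control of backward pullback orbits of bounded sectors (Lemma~\ref{l:sn}, Theorem~\ref{t:del-bnd}) to force the critical decoration to have a \emph{periodic} quadratic argument, hence $\Uf$ is of periodic type (Lemma~\ref{l:perio-type}); then a hyperbolic-metric contraction argument in a sector (Lemma~\ref{l:al-land}, Theorem~\ref{t:sameland}) shows the two rays $R(\al_\Uf)$, $R(\be_\Uf)$ on the major of $\Uf$ land at the \emph{same} point of $K^*$. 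That common landing point is the periodic cut contradicting empty rational lamination. None of these steps -- decorations, the gap $\Uf$, the dichotomy for pullback sectors, the periodicity of the critical argument, the two-rays-same-landing-point hyperbolic argument -- appears in your sketch, and without them the phrase ``one should be able to produce a cutpoint'' is not an argument. The hardest part of the paper is exactly the part your plan waves at.
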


In the situation of Theorem \ref{t:recur} we will always denote a connected QL filled Julia set
by $K^*$; also, we will fix its neighborhood $U^*$ on which $f$ is QL and denote $f|_{U^*}$ by $f^*$.

\noindent\textbf{Acknowledgements.} We are grateful to the referee for useful comments.

\section{Preliminaries}
\label{s:prelim}
By \emph{classes} of polynomials, we mean affine conjugacy classes. For
a polynomial $f$, let $[f]$ be its class, let $K(f)$ be its filled Julia set,
and let $J(f)$ be its Julia set. The \emph{connectedness locus $\Mc_d$ of degree $d$} is the
set of classes of degree $d$ polynomials whose critical points \emph{do
not escape} (i.e., have bounded orbits). Equivalently, $\Mc_d$ is the
set of classes of degree $d$ polynomials $f$ whose Julia set $J(f)$ is connected.
The classical \emph{Mandelbrot set} is identified with $\Mc_2$.
We study the \emph{cubic connectedness locus} $\Mc_3$.
The \emph{principal hyperbolic domain} $\phd_3$ of $\Mc_3$ is defined as the set of classes
of hyperbolic cubic polynomials whose Julia sets are Jordan curves.
Equivalently, $[f]\in\phd_3$ if both critical points of $f$ are
in the immediate basin attraction of the same (super-)attracting fixed point.
A polynomial is \emph{hyperbolic} if the orbits of all critical points converge
to (super-)attracting cycles.

\subsection{PL maps}\label{ss:pl}
Below is a brief overview of some results and concepts from \cite{DH-pl}.

\begin{dfn}
\label{d:ql2}
Two PL maps $f:U\to f(U)$ and $g:V\to g(V)$ of degree $k$ are said to be
\emph{hybrid conjugate} if there is a quasi-conformal map $\vp$, \emph{hybrid conjugacy}, from a
neighborhood of $K(f)$ to a neighborhood of $K(g)$ conjugating $f$ to
$g$ in the sense that $g\circ\vp=\vp\circ f$ wherever both sides are
defined and such that $\ol\partial\vp=0$ almost everywhere on $K(f)$.
\end{dfn}

Note that hybrid equivalent PL maps are, in particular, topologically conjugate on their filled Julia set.
Any polynomial $P$ can also be viewed as a PL map if one restricts $P$ to
 a suitable PL neighborhood of $K(P)$.

\begin{thmS}[\cite{DH-pl}]
Let $f:U\to f(U)$ be a PL map. Then $f$ is hybrid
conjugate to a polynomial $P$ of the same degree. Moreover, if $K(f)$
is connected, then $P$ is unique up to $($a global$)$ conjugation by an
affine map.
\end{thmS}

External rays of $P$ can be (partially) transferred to a neighborhood of $K(f)$ by a hybrid conjugacy.

\begin{dfn}\label{d:plrays}
Let $f$ be a polynomial, and for some Jordan disk $U^*$, the map $f^*=f|_{U^*}$ be PL.
Consider a monic polynomial $P$ hybrid equivalent to $f^*$.
The set $K(f^*)=K^*$ of $f^*$ is called the \emph{PL invariant filled Julia set}
 (or simply a \emph{PL set}).
Fix a hybrid conjugacy between $f^*$ and $P$.
The curves in $U^*$ corresponding (through the hybrid conjugacy) to dynamic rays of $P$ are called
 \emph{PL rays} of $f^*$. If the degree of $f^*$ is two, then we will talk about \emph{QL rays}.
Denote PL rays $R^*(\be)$, where $\be$ is the argument of the
external ray of $g$ corresponding to $R^*(\be)$.
We will also call them \emph{$K^*$-rays} to distinguish them from rays external to $K(f)=K$
called \emph{$K$-rays} and denoted by $R(\al)$ where $\al\in\R/\Z$ is the argument of the ray.
\end{dfn}


The $K^*$-rays are defined in a bounded neighborhood of $K^*$ while $K$-rays are unbounded.
Let $\vp:U^*\to\C$ be a hybrid conjugacy between $f^*$ and $P|_{\vp(U^*)}$.
We assume that $\vp$ is defined on $U^*$;
 this can be arranged by replacing $U^*$ with a smaller QL neighborhood of $K^*$ if necessary.
Composing $\vp|_{U^*\sm K^*}$ with a B\"ottcher parametrization of $\C\sm K(P)$
 gives a topological conjugacy $\psi^*$ between $f^*$ and the map $z\mapsto z^{\deg P}$ on $U^*\sm K^*$.
Note that $\psi^*$ is uniquely determined by $\vp$ only if $P$ is quadratic;
 otherwise there is a freedom in choosing a B\"ottcher parameterization of $\C\sm K(P)$.
The map $\psi^*$ conjugates $f$ with $z\mapsto z^{\deg(P)}$ near $K^*$.
In this paper, $P$ will be quadratic, and hence $\psi^*$ will depend only on the choice of
 the hybrid conjugacy $\vp$.
Using the map $\psi^*$, assign \emph{arguments} to all $K^*$-rays.
These are called \emph{quadratic arguments}.

Evidently, if $f$ is a polynomial of degree $d$ and $T\subsetneqq J(f)$
 is a proper PL invariant Julia set, then the degree of
$f|_T$ is less than $d$. In particular, if $f$ is a \emph{cubic}
polynomial and $K^*\subsetneqq K(f)$ is a PL invariant filled
Julia set, then the PL map $f|_{K^*}$ is QL. The
following lemma is proven in \cite{bot16} (it is based upon Theorem
5.11 from McMullen's book \cite{mcm94}).

\begin{lem}[Lemma 6.1 \cite{bot16}]\label{l:7.2}
Let $f$ be a complex cubic polynomial with a non-repelling fixed point $a$.
Then the QL invariant filled Julia set $K^*$ with $a\in K^*$ (if any) is unique.
\end{lem}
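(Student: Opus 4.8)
The plan is to prove uniqueness of the QL invariant filled Julia set $K^*$ containing a given non-repelling fixed point $a$ by a connectivity/separation argument that ultimately rests on the cited Theorem 5.11 of McMullen. First I would observe that, $f$ being cubic, any QL invariant filled Julia set is the filled Julia set of a degree-$2$ restriction; in particular, such a $K^*$ contains exactly one critical point of $f$ (counted with multiplicity as two, since a quadratic-like map has a single critical point). So suppose for contradiction that there are two distinct QL invariant filled Julia sets $K_1^*\ni a$ and $K_2^*\ni a$, with associated QL neighborhoods $U_1, U_2$ and restrictions $f_1=f|_{U_1}$, $f_2=f|_{U_2}$. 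Each $K_i^*$ is connected, full (its complement is connected), forward invariant, and contains the critical point of the corresponding restriction.

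The key step is to compare $K_1^*$ and $K_2^*$ directly. Both contain $a$, hence $K_1^*\cap K_2^*\ne\emptyset$. I would first rule out the case where one is contained in the other: if, say, $K_1^*\subsetneqq K_2^*$, then $K_1^*$ is a proper QL sub-Julia-set inside the quadratic-like dynamics $f_2$, which would force $f_2|_{K_1^*}$ to have degree $1$ — impossible for an invariant filled Julia set of a non-repelling-type fixed point, or more cleanly, a quadratic-like map has no proper invariant quadratic-like sub-Julia-set through its non-repelling fixed point unless they coincide (this is exactly the quadratic case of the lemma, which follows from McMullen's rigidity result, Theorem 5.11 in \cite{mcm94}). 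Then I would handle the remaining case, where neither contains the other, by taking the union: one checks that $K_1^*\cup K_2^*$ is connected (as a union of two connected sets with common point $a$), forward invariant, and admits a polynomial-like neighborhood (a suitable union/thickening of $U_1$ and $U_2$ restricted near the filled Julia sets), so by the Straightening Theorem $f$ restricted there is polynomial-like of some degree $\le 3$. Since the union strictly contains each $K_i^*$, that degree must be $3$, i.e. $K_1^*\cup K_2^*$ would have to be all of $K(f)$ — but then $K_i^*=K(f)$ for at least one $i$, contradicting that $K_i^*\subsetneqq K(f)$ unless both critical points lie in $K_i^*$, which cannot happen for a degree-$2$ restriction. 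Keeping track of the critical points: $f$ has two critical points $\omega_1,\omega_2$; each $K_i^*$ contains exactly one of them; if $K_1^*$ and $K_2^*$ contained the same critical point they would, by the uniqueness of the quadratic-like structure around a non-repelling fixed point through a single critical point, have to coincide; if they contained different critical points, then $K_1^*\cup K_2^*$ contains both, forcing the polynomial-like degree to be $3$ and hence $K_1^*\cup K_2^*=K(f)$, again contradicting properness of each $K_i^*$.

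I expect the main obstacle to be the second case — verifying that the union $K_1^*\cup K_2^*$ really does carry a polynomial-like structure. The naive union $U_1\cup U_2$ need not be a Jordan disk, and $f$ on it need not be proper onto its image. The fix is to work with a neighborhood basis: shrink $U_1,U_2$ so that their closures are small and then take the filled-in union of the two (that is, the union together with the bounded complementary components), smoothing the boundary; invariance of $K_1^*\cup K_2^*$ and the fact that both are full should guarantee that for a suitable such neighborhood $V$ one has $V\Subset f(V)$ and $f|_V$ proper, so the Straightening Theorem applies. This is precisely where McMullen's Theorem 5.11 does the real work in \cite{bot16}, so in writing the proof I would cite that argument rather than reprove it; the contribution here is the bookkeeping with the critical points and the reduction to the quadratic uniqueness statement. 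The non-repelling hypothesis on $a$ enters to guarantee that $a$ is not a cut point that could let two genuinely different renormalizations attach at $a$ — it pins $a$ inside the filled Julia set in a way that makes the "same critical point $\Rightarrow$ same $K^*$" step valid.
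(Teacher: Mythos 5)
The paper gives no proof of Lemma \ref{l:7.2}; it cites Lemma~6.1 of \cite{bot16} and notes only that the cited proof ``is based upon Theorem 5.11 from McMullen's book \cite{mcm94}.'' Your nested case is sound: if $K_1^*\subsetneqq K_2^*$, both sets contain the same critical point of $f$ (a connected QL filled Julia set contains exactly one), and transporting $K_1^*$ through a hybrid conjugacy between $f|_{U_2}$ and a quadratic polynomial $P$ produces a proper PL invariant filled Julia set of $P$ of degree $2$, contradicting the paper's remark that a proper PL invariant Julia set $T\subsetneqq J(P)$ must have $\deg(P|_T)<\deg P$. That is an elementary degree count, though, and not really what McMullen's theorem supplies, so your attribution of this step to ``McMullen's rigidity'' is off.

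The genuine gap is in the ``neither contains the other'' branch. In the subcase where both $K_i^*$ contain the same critical point you invoke ``the uniqueness of the quadratic-like structure around a non-repelling fixed point through a single critical point'' --- which is exactly the lemma being proved, so the step is circular. In the other subcase you pass from $K_1^*\cup K_2^*=K(f)$ to ``$K_i^*=K(f)$ for at least one $i$''; this does not follow, since two proper full subcontinua sharing the point $a$ can cover $K(f)$ without either being the whole thing. And even before that, you yourself flag that it is unclear the union carries a PL structure: moreover, even if some Jordan disk $W\supset K_1^*\cup K_2^*$ with $W\Subset f(W)$ exists, nothing forces $K(f|_W)$ to equal $K_1^*\cup K_2^*$ rather than something strictly larger, so the degree-$3$ conclusion does not reliably attach to the union. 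What the cited argument uses McMullen's theorem for is to eliminate the non-nested case at the outset: filled Julia sets of polynomial-like restrictions of a single map are either nested, disjoint, or meet only in a repelling periodic point; since $K_1^*$ and $K_2^*$ share the non-repelling fixed point $a$, they must be nested, and then your degree count finishes. That non-crossing step is what your outline is missing, and the union construction is not a workable substitute for it.
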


\subsection{Polynomials with empty rational lamination}
\label{ss:emp-lam}

As was said in the Introduction, we want to study cubic polynomials
$f\in \Mc_3$ without rational cuts (equivalently, with empty rational lamination).

\begin{lem}\label{l:gray-1}
Suppose that a cubic polynomial $f$ has empty rational lamination. Then
$f$ has exactly one fixed non-repelling point; all other
periodic points of $f$ are repelling. Moreover, there are no periodic
repelling/parabolic points of $f$ that are cutpoints of an invariant continuum $Q\subset J(f)$.
\end{lem}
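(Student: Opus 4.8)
The plan is to establish the three assertions in sequence, using the fact that an empty rational lamination forbids any rational cuts. First I would address the count of non-repelling fixed points. A cubic polynomial has three fixed points (counted with multiplicity). By the classical Fatou–Shishikura-type bound (or, more elementarily, by counting attracting/parabolic basins and the critical points they must absorb), a cubic polynomial has at most two non-repelling cycles; and at least one fixed point must be non-repelling unless all three are repelling, in which case one argues via the Fatou index formula $\sum 1/(1-\rho_i)$ over fixed points that a cubic cannot have all fixed points repelling with the remaining data consistent — but the cleanest route is: the three fixed point multipliers $\rho_1,\rho_2,\rho_3$ satisfy the holomorphic fixed point formula $\sum_i \frac{1}{1-\rho_i}=1$ whenever no $\rho_i=1$, and a direct case analysis shows one cannot have two of them in $\cdisk$ simultaneously without forcing a parabolic/attracting cycle whose basin contains a critical point orbit that (via the existence of a non-repelling periodic point of period $>1$ or a second non-repelling fixed point) creates a periodic point which is the landing point of rational rays — contradicting emptiness of the rational lamination. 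So I would argue that two non-repelling fixed points, or a non-repelling fixed point plus a non-repelling cycle of higher period, would produce a (pre)periodic ray landing point and hence a rational cut, which is excluded; and that all periodic points other than the unique non-repelling fixed point must then be repelling (a parabolic or Cremer/Siegel periodic cycle of period $>1$ would again be accompanied, for parabolic points by their landing rays, and for the repelling periodic points bounding the relevant Fatou components, by rational cuts).

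The heart of the argument — and the part I expect to be the main obstacle — is the second assertion: no periodic repelling or parabolic point $x$ can be a cutpoint of an invariant continuum $Q\subset J(f)$. Here I would invoke the structure theory for landing of periodic rays: a repelling or parabolic periodic point $x$ has at least one periodic external ray landing at it (Douady–Hubbard / Eremenko–Levin; this requires local connectivity only locally near $x$, which holds at repelling and parabolic points). If $x$ is a cutpoint of an invariant continuum $Q$, then $Q\setminus\{x\}$ has at least two components, and one uses invariance of $Q$ together with the fact that $f$ permutes the finitely many external rays landing at $x$ to show that among the rays landing at $x$ there are at least two that are themselves periodic (not just preperiodic): the cyclic order of the components of $Q\setminus\{x\}$ at $x$ is respected by $f^{\mathrm{per}(x)}$, and a combinatorial rotation-number argument on this finite cyclic set forces a periodic ray to separate $Q$, i.e. forces $x$ to be the landing point of two distinct periodic rays. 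Those two periodic rays constitute a rational cut, contradicting the emptiness of the rational lamination.

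To make the middle step rigorous I would carefully set up the local separation picture: shrink to a small neighborhood $V$ of $x$ in which $J(f)$ near $x$ looks like finitely many arcs emanating from $x$ (valence of $x$ is finite since $x$ is repelling/parabolic and $K(f)$ is connected), note that the external rays landing at $x$ fall into the gaps between consecutive local branches of $Q$, and that invariance of $Q$ plus the expanding/parabolic local model means $f^{\mathrm{per}(x)}$ acts on the set of local branches of $Q$ at $x$ as a cyclic permutation; then a ray landing at $x$ that separates two branches belonging to different cycles-of-branches must have periodic argument. The delicate points are (i) justifying finite valence and the existence of at least one landing periodic ray in the parabolic case (standard, via the Leau–Fatou flower and the fact that parabolic periodic points are landing points of finitely many periodic rays), and (ii) ensuring that a separating periodic ray genuinely exists rather than merely a preperiodic one — this is where the cyclic-order/rotation-number bookkeeping is essential. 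Once two periodic rays landing at a common point are produced, emptiness of the rational lamination is directly violated, completing the proof.
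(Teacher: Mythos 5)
The paper's proof is considerably shorter and leans on three citations, each of which does the heavy lifting for one of the three assertions; your proposal instead tries to reprove the deepest one essentially from scratch and doesn't close the gap.

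For the statement that not all three fixed points are repelling, the paper uses a clean pigeonhole: a repelling fixed point of a polynomial with connected Julia set is the landing point of a finite $\si_3$-invariant set of rays, and if exactly one ray lands it must be a \emph{fixed} ray (argument $0$ or $1/2$); but there are only two fixed arguments for $\si_3$ and three distinct fixed points, so some fixed point must receive at least two rays, a rational cut. Your route through the holomorphic fixed-point formula $\sum 1/(1-\rho_i)=1$ is not wrong in spirit, but you never actually complete the ``direct case analysis'' and it is noticeably harder than what is needed. For the statement that there is no second non-repelling cycle, the paper simply invokes Goldberg--Milnor (\cite[Theorem 3.3]{GM93}); your sketch (``a parabolic or Cremer/Siegel periodic cycle of period $>1$ would be accompanied by rational cuts'') gestures at the right conclusion but does not actually argue it.

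The real gap is in the cutpoint assertion. The paper's proof is a two-step citation chain: the Main Theorem of \cite{bot21} upgrades ``$x$ is a periodic repelling/parabolic cutpoint of an \emph{invariant continuum} $Q\subset J(f)$'' to ``$x$ is a cutpoint of $K(f)$,'' and then \cite[Theorem 6.6]{mcm94} converts a periodic repelling/parabolic cutpoint of $K(f)$ into a rational cut. Your proposal skips the first step entirely. You pass from ``$x$ separates $Q$'' to a picture where ``external rays landing at $x$ fall into the gaps between consecutive local branches of $Q$,'' but this is exactly the claim that needs proof: a point can disconnect a subcontinuum $Q\subset J(f)$ locally without that separation being \emph{visible from the basin of infinity} --- a priori the branches of $Q$ at $x$ could all lie on the same side of every ray landing at $x$, or $x$ could even fail to be a cutpoint of $K(f)$ at all. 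Establishing that the separation of $Q$ forces $x$ to be a cutpoint of $K(f)$ (equivalently, forces at least two rays to land at $x$) is precisely the content of the theorem the paper cites from \cite{bot21}, and it is nontrivial. Once you \emph{have} two rays landing at a periodic repelling/parabolic point, your rotation-number bookkeeping to extract a periodic pair is standard and fine (and is essentially what McMullen's Theorem 6.6 packages); but as written your argument assumes the hard part.
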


\begin{proof}
If all fixed points of $f$ are repelling, then one of them
 is the landing point of more than one ray
 (indeed, there are 3 fixed points and only 2 invariant rays), a contradiction.
Also, if $f$ has a fixed non-repelling point and a distinct periodic non-repelling point,
 then the rational lamination of $f$ is non-empty by \cite[Theorem 3.3]{GM93}, a contradiction.
Finally, if $x$ is a repelling/parabolic cutpoint
 of an invariant continuum $Q\subset J(f)$, then, by the Main Theorem of \cite{bot21}, the point
$x$ is a cutpoint of $K(f)$, which implies (e.g., by Theorem 6.6 of \cite{mcm94})
that the rational lamination of $J(f)$ is non-empty, again a contradiction.
\end{proof}

\begin{cor}\label{c:7.2}
A cubic polynomial $f$ with empty rational lamination contains, in its filled Julia set
 $K(f)$, at most one forward invariant set $K^*$ that is the
 Julia set of a quadratic-like restriction of $f$;
 this set must contain the unique non-repelling fixed point of $f$.
\end{cor}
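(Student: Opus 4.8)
The plan is to deduce Corollary~\ref{c:7.2} from Lemmas~\ref{l:gray-1} and~\ref{l:7.2}. Let $K^*=K(f^*)$ be a forward invariant set that is the filled Julia set of a quadratic-like restriction $f^*=f|_{U^*}$ of $f$; since $f$ is cubic, $K^*\subsetneqq K(f)$ (while $f^*$ has degree $2$), and by the Straightening Theorem $f^*$ is hybrid conjugate to a quadratic polynomial $P$ via a conjugacy $\vp$ whose restriction is a homeomorphism $K^*\to K(P)$ conjugating $f^*|_{K^*}$ to $P|_{K(P)}$ (I take $K^*$ connected, as throughout the paper). By Lemma~\ref{l:7.2} it suffices to show that every such $K^*$ contains the unique non-repelling fixed point $a$ of $f$ (which exists by Lemma~\ref{l:gray-1}): uniqueness then follows at once. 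So I will argue by contradiction, assuming $a\notin K^*$. Since every periodic point of $f^*$ is a periodic point of $f$ lying in $K^*$, Lemma~\ref{l:gray-1} forces every periodic point of $f^*$ to be repelling.

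Next I would show $K^*$ has empty interior, so that $K^*=J(f^*)$. A routine equicontinuity argument --- using that $K(f^*)\Subset U^*$ together with normality of $\{f^n\}$ on the Fatou set of $f$ --- gives $J(f^*)\subseteq J(f)$. Consequently, any bounded Fatou component $F$ of $f$ that meets $\inn(K^*)$ is contained in $K^*$ (it is connected and disjoint from $J(f^*)\subseteq J(f)$); by Sullivan's no wandering domains theorem $F$ is preperiodic, and since $f(K^*)=K^*$ some periodic Fatou component $W$ of $f$ lies in $K^*$. But by Lemma~\ref{l:gray-1} the only non-repelling cycle of $f$ is $\{a\}$, so the only cycle of periodic Fatou components of $f$ is the one associated with $a$ --- an attracting or parabolic basin, or a Siegel disk --- whence $a\in\ol{W}\subseteq K^*$, contradicting $a\notin K^*$. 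Hence $\inn(K^*)=\emptyset$, and $K^*=J(f^*)\subseteq J(f)$ is a forward invariant subcontinuum of $J(f)$; likewise $K(P)=\vp(K^*)$ has empty interior, so $P$ has no bounded Fatou component.

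It remains to produce a repelling periodic cutpoint of $K^*$, which by Lemma~\ref{l:gray-1} is impossible. Let $\al_P$ be the fixed point of $P$ other than the landing point $\be_P$ of the fixed ray $R(0)$ of $P$ (they are distinct, since a double fixed point would be parabolic and give $K(P)$ a parabolic basin). If $\al_P$ is repelling, at least one external ray of $P$ lands at it; it cannot be a single ray (that ray would be fixed, hence $R(0)$, landing at $\be_P$), so at least two rays land at $\al_P$, and by standard arguments $\al_P$ is then a cutpoint of $K(P)$. Transporting by $\vp$, the point $\vp^{-1}(\al_P)\in K^*$ is a cutpoint of $K^*$ and a fixed point of $f$; since it lies in $K^*$ and $a\notin K^*$, it is repelling --- so $K^*\subseteq J(f)$ is a forward invariant continuum with a repelling periodic cutpoint, contradicting Lemma~\ref{l:gray-1}. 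If instead $\al_P$ is non-repelling, then, as $K(P)$ has no bounded Fatou component, $\al_P$ is a Cremer point; by P\'erez-Marco's hedgehog theorem $P$ has arbitrarily small invariant continua $\Hf\ni\al_P$ with $\Hf\neq\{\al_P\}$ and $\Hf\subseteq K(P)$, so $\vp^{-1}(\Hf)$ is a nontrivial $f$-invariant continuum through $\vp^{-1}(\al_P)\in K^*$ contained in an arbitrarily small neighborhood of that point --- impossible, since $\vp^{-1}(\al_P)$ is a repelling fixed point of $f$ (again $a\notin K^*$) and a repelling fixed point admits no nontrivial invariant continuum in its linearizing neighborhood. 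Either way we reach a contradiction, so $a\in K^*$, and Lemma~\ref{l:7.2} finishes the proof.

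The delicate point I anticipate is the proof that $\inn(K^*)=\emptyset$ --- this is exactly what makes Lemma~\ref{l:gray-1} (which speaks of continua inside $J(f)$) applicable --- together with carefully transferring a cutpoint, or a small invariant continuum, of the quadratic model $P$ back to $K^*$ through the hybrid conjugacy; everything else is classical quadratic dynamics and the two lemmas already established.
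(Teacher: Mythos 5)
Your proof is correct and follows the same overall strategy as the paper's: assume $a\notin K^*$, conclude by Lemma~\ref{l:gray-1} that all periodic points of $f^*$ are repelling, produce a fixed cutpoint of $K^*$, derive a contradiction with the final statement of Lemma~\ref{l:gray-1}, and finish by invoking Lemma~\ref{l:7.2} for uniqueness. In fact you have spelled out the alternative the paper parenthetically attributes to ``the Straightening Theorem and simple counting.'' The paper's primary citation is a fixed point theorem for plane continua (Theorem~7.5.2 of [BFMOT12]), applied directly to $f^*|_{K^*}$, which bypasses the straightening altogether.

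Where you add value is in filling two points the paper leaves tacit. First, Lemma~\ref{l:gray-1} speaks of invariant continua $Q\subset J(f)$, so one really does need $K^*\subset J(f)$; your argument that $\inn(K^*)=\0$ (periodic Fatou cycles are tied to non-repelling cycles, and the only such cycle is $\{a\}\not\subset K^*$) supplies exactly this. Second, the ``simple counting'' only produces a cutpoint when the $\al$-fixed point of the straightening $P$ is repelling, and you handle the remaining Cremer possibility via hedgehogs. That case is actually vacuous: a topological (a fortiori hybrid) conjugacy cannot identify a Cremer fixed point with a repelling one, since their local topological dynamics differ (small cycles accumulate at a Cremer point, whereas orbits near a repelling point escape every compact neighborhood). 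Your hedgehog argument is one valid way of making this incompatibility concrete, but a one-line appeal to invariance of local topological type under conjugacy would do. In sum: same route, carried out in more detail, with no gaps.
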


\begin{proof}
By Lemma \ref{l:gray-1}, the map $f$ has a unique non-repelling fixed point,
say, $a$, and all other periodic points of $f$ are repelling. If $K^*$ does not
contain $a$, then all its periodic points are repelling, and,
 by Theorem 7.5.2 of \cite{bfmot12} (or by the Straightening Theorem and simple counting), the map $f^*$
 has a fixed cutpoint $b$, a contradiction with Lemma \ref{l:gray-1}.
Thus, $K^*$ contains $a$; by Lemma \ref{l:7.2} it is unique.
\end{proof}

\subsection{Full continua and their decorations}\label{s:fucode}

In this section we consider \emph{pairs} $X\subset Y$ of full continua in the plane (a compact set $X\subset\C$ is
 \emph{full} if $\C\sm X$ is connected). This is a natural situation occurring in
complex dynamics, both when studying polynomials and their parameter spaces.
Indeed, let a cubic polynomial $f$ have a connected filled Julia set $K(f)=K$.
In this case if $K^*$ is a PL set of $f$, then $K^*\subset K$ is a pair of full continua.
Another example is when one takes the filled Main Cardioid of the Mandlebrot
set $\Mc_2$. It is easy to give other dynamical or parametric examples.

Let $X\subset Y$ be two full planar continua. We would like to
represent $Y$ as the union of $X$ and \emph{decorations (of $Y$ rel. $X$)}.

\begin{dfn}
\label{d:dec} Components of $Y\sm X$ are called \emph{decorations (of
$Y$ relative to $X$)}, or just \emph{decorations} (if $X$ and $Y$ are fixed).
\end{dfn}

Decorations are connected but not closed; thus, decorations may behave
differently from what common intuition suggests. In Lemma \ref{l:triv}
we discuss topological properties of decorations. Given sets $A$ and
$B$, say that $A$ \emph{accumulates in} $B$ if $\ol{A}\sm A\subset B$.

\begin{lem}\label{l:triv}
Any decoration $D$ of $Y$ rel. $X$ accumulates in $X$.
The set $\ol{D}\sm D=\ol{D}\cap X$ is a continuum.
The sets $\ol{D}$ and $D\cup X=\ol{D}\cup X$ are full continua.
\end{lem}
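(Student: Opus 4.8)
The claim has three parts, and the natural order is: (1) every decoration $D$ accumulates in $X$; (2) $\overline D\setminus D=\overline D\cap X$ is a continuum; (3) $\overline D$ and $D\cup X=\overline D\cup X$ are full continua. I would extract all three from a single topological fact about full continua: if $Y$ is a full continuum and $X\subsetneqq Y$ is a full subcontinuum, then each component $D$ of the open set $Y\setminus X$ has frontier $\partial D\subset X\cup\partial Y$, and in fact the part of $\partial D$ on $\partial Y$ is controlled because $Y$ is full. Let me expand this.

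For part (1): $D$ is a component of the open set $U:=Y\setminus X$ (open in $Y$), so $D$ is open in $Y$ and $\overline D\setminus D\subset \partial_Y D\subset \partial_Y U=\partial_Y X\subset X$ — here I use that $X$ is closed, and that a point of $\overline D\setminus D$ lying in $Y\setminus X=U$ would have a neighborhood in $U$, forcing it into $D$ by connectedness, contradiction. So $\overline D\setminus D\subset X$; in particular $\overline D\setminus D\neq\emptyset$ unless $D$ is clopen in $Y$, which is impossible since $Y$ is connected and $D\neq Y$ (as $X\neq\emptyset$ — and if $X=\emptyset$ there are no decorations to speak of; I would note $X$ is assumed a continuum, hence nonempty). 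Thus $D$ accumulates in $X$, which also gives $\overline D\setminus D=\overline D\cap X$ (the inclusion $\overline D\cap X\subset\overline D\setminus D$ is immediate since $D\cap X=\emptyset$).

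For part (2), that $\overline D\cap X$ is a continuum: it is compact (closed subset of the compact set $Y$); the issue is connectedness. Here is where I would invoke fullness in an essential way. Suppose $\overline D\cap X=A_1\sqcup A_2$ is a separation into disjoint nonempty compacta. The standard \emph{boundary bumping} lemma says that $\overline D$ is a continuum (closure of a connected set) and $\overline D\cap X=\overline D\setminus D$ is nonempty; moreover the boundary bumping lemma gives that each component of $\overline D\setminus D$ limits... but that is not quite enough. The cleaner route: $\overline D$ is a continuum, and $D=\overline D\setminus X$ is connected. If $\overline D\cap X$ were disconnected, write it as $A_1\sqcup A_2$; then I would argue that $D\cup A_1$ and $D\cup A_2$ give a way to disconnect a set that should be connected, OR, more robustly, I would pass to the plane and use that $\mathbb C\setminus X$ is connected (fullness of $X$). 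Concretely: $\overline D$ is a full continuum? That is part (3), so I should prove (3) first or simultaneously. Let me reorganize: prove $\overline D$ is full, then connectedness of $\overline D\cap X$ follows because $\overline D$ full and $X$ full force their intersection to be connected via the fact that the complement of $\overline D\cup X$ is connected and a standard Mayer–Vietoris / unicoherence-of-the-plane argument ($S^2$ is unicoherent: if $S^2=C_1\cup C_2$ with $C_i$ continua then $C_1\cap C_2$ is connected). So: $\overline D$ and $X$ are continua in $S^2=\mathbb C\cup\{\infty\}$ whose union $\overline D\cup X$ is a continuum, and whose complements (each connected, since both are full, and containing $\infty$) — then $S^2\setminus(\overline D\cap X)$... hmm, I want $\overline D\cap X$ connected. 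Unicoherence of $S^2$: if $S^2 = F_1 \cup F_2$ with $F_1, F_2$ closed connected, then $F_1 \cap F_2$ is connected. Take $F_1=\overline D\cup(\text{complement stuff})$... Actually the clean statement I want is: in a unicoherent space, the intersection of two continua whose union is a continuum need not be connected, but the intersection of two continua whose \emph{complements} are connected IS connected. Equivalently, take $F_1 = S^2\setminus (\text{int of }\overline D)$-type closures — I would just cite that the plane is unicoherent and that for full continua $\overline D, X$ with $\overline D\cup X$ also having connected complement, $\overline D\cap X$ is connected; this is a known lemma (e.g. in the theory of full continua / it appears in the cited \cite{bfmot12}-style references).

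For part (3), fullness of $\overline D$ and of $D\cup X$: First, $D\cup X=\overline D\cup X$ because $\overline D\setminus D\subset X$. It is closed (union of closed $X$ and... well, $\overline D\cup X$ is visibly closed) and connected ($\overline D$ is connected, $X$ is connected, and they meet by part (1)), hence a continuum. Its complement $\mathbb C\setminus(\overline D\cup X)=(\mathbb C\setminus \overline D)\cap(\mathbb C\setminus X)$: I claim this is connected. Since $Y$ is full, $\mathbb C\setminus Y$ is connected; and $\mathbb C\setminus(\overline D\cup X)$ is the union of $\mathbb C\setminus Y$ with the other components of $Y\setminus X$ (all decorations $D'\neq D$) — each such $D'$ is connected and, by part (1) applied to $D'$, accumulates on $X\subset \overline D\cup X$, but I need it to be reachable from $\mathbb C\setminus Y$. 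The cleanest argument: $\mathbb C\setminus \overline D$ is connected iff $\overline D$ is full; suppose not, then $\mathbb C\setminus\overline D$ has a bounded component $V$, and $V\subset Y$ (since $\mathbb C\setminus Y\subset \mathbb C\setminus \overline D$ is connected and unbounded, $\mathbb C\setminus Y$ lies in the unbounded component, so any bounded component $V$ of $\mathbb C\setminus\overline D$ misses $\mathbb C\setminus Y$, i.e. $V\subset Y$); but then $\partial V\subset \overline D$, and $V\cap X\neq\emptyset$ would contradict... actually $V\subset Y$ and $V\cap \overline D=\emptyset$ so $V\subset Y\setminus D$; since $V$ is connected and disjoint from $D$ but $\partial V\subset\partial_{\mathbb C}\overline D\subset \overline D$, and $\overline D\cap (Y\setminus D)\subset X$, we get $\partial V\subset X$, so $V$ is a bounded complementary component of $X$ — contradicting that $X$ is full. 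Hence $\overline D$ is full. The same bounded-component argument applies verbatim to $\overline D\cup X$: a bounded component $V$ of its complement lies in $Y$, has $\partial V\subset \overline D\cup X$; if $V$ met some decoration $D'$ then $V\supset D'$ (as $D'$ is a component of $Y\setminus X$ and $V\cap X=\emptyset$... need $D'$ connected and $V\cap\overline{D'}$ issues) — rather than chase this, I would package it as: $\mathbb C\setminus(\overline D\cup X)$ has no bounded component because any such would be a bounded component of $\mathbb C\setminus X$, using that decorations other than $D$ together with $\mathbb C\setminus Y$ only add to the unbounded component.

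The main obstacle is the connectedness of $\overline D\cap X$ in part (2): unlike parts (1) and (3), which are routine point-set topology (open-component frontier facts plus the "bounded complementary component lies inside $Y$" trick), part (2) genuinely needs a unicoherence-type input for the plane/sphere. I would handle it by first establishing part (3) (that $\overline D$ and $\overline D\cup X$ are full continua), and then applying the unicoherence of $S^2$ to the two continua $\overline D$ and $X$: since $S^2\setminus\overline D$ and $S^2\setminus X$ are both connected (fullness) and $S^2\setminus(\overline D\cup X)$ is connected, it follows by a standard argument that $\overline D\cap X$ is connected. If a direct appeal to unicoherence feels too heavy, the alternative is a hands-on argument: assume $\overline D\cap X=A_1\sqcup A_2$, use boundary bumping inside $\overline D$ to find, for each $i$, that $A_i$ cannot "absorb" all the limiting of $D$, producing a separation of the connected set $\overline D$ — but this is essentially reproving unicoherence, so citing it (it is classical, and is used in this circle of papers) is preferable.
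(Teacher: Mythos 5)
Your overall plan is sound and lands in essentially the same place as the paper's proof, with the same decomposition (accumulation first, fullness of $\ol D$ next, connectedness of $\ol D\cap X$ last), but there are a few slips worth flagging and one genuine difference in packaging.

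For part (1), the assertion that ``$D$ is open in $Y$'' is false in general: components of open subsets are open only in locally connected spaces, and $Y$ need not be locally connected. Fortunately you do not actually use it; the part of your argument that does the work (a point $x\in\ol D\sm(D\cup X)$ yields $D\cup\{x\}\subset \ol D\subset Y$, a connected subset of $Y\sm X$ strictly larger than $D$) is exactly the paper's argument and is correct. You should simply delete the openness claim.

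For part (3), your chain ``$V\subset Y\sm D$, $\partial V\subset\ol D$, and $\ol D\cap(Y\sm D)\subset X$, hence $\partial V\subset X$'' has a gap: $\partial V$ is not contained in $V$, so $\partial V\cap D$ may well be nonempty, and then $\partial V\not\subset X$. What is needed is the case split that both you and the paper leave implicit: if $\partial V$ meets $D$, then $V\cup D$ is a connected subset of $Y\sm X$ strictly larger than $D$, contradiction; otherwise $\partial V\subset X$ and $V$ lies inside a bounded complementary component of $X$, contradicting fullness of $X$. With that split, both your and the paper's fullness arguments close correctly.

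For part (2), you invoke unicoherence of $S^2$ (equivalently, a Čech Mayer--Vietoris argument), after first establishing $\ol D$ and $\ol D\cup X$ full. The paper instead asserts directly that if $\ol D\cap X$ is disconnected then there is a bounded complementary component of $X\cup D$ accumulating on both $X$ and $D$, and derives a contradiction by placing that component inside a decoration $D'\ne D$ (whose closure would then meet $D$, contradicting part (1)). These are the same underlying planar topology: the existence of the bounded component the paper asserts \emph{is} the unicoherence fact. Citing unicoherence is a legitimate, perhaps cleaner, way to organize this, at the cost of needing $\ol D\cup X$ full beforehand; the paper's route gets away without that by reasoning directly about the would-be bounded component. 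Either is fine; your version is a bit more hand-wavy as written (the argument that $\C\sm(\ol D\cup X)$ has no bounded component is stated loosely -- the precise point is that such a component would coincide with some decoration $D'\ne D$, hence be open with $\partial V=\ol{D'}\sm D'\subset X$, contradicting $X$ full), but the ideas are all present.
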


\begin{proof}
Suppose, by way of contradiction, that there exists $x\in \ol{D}\sm
(D\cup X)$. Then we have $D\subset A=D\cup \{x\}\subset \ol{D}$ while
$A\cap X=\0$. Since $D$ is connected, and since $D\subset A\subset \ol{D}$,
then $A$ is connected too. Hence $D$ is not a
component of $Y\sm X$, a contradiction.

The continuum $\ol{D}$ is full.
Indeed, otherwise there is a bounded complementary domain $U$ of $\ol D$.
If $U\sm Y\ne\0$, then all components of this set are bounded complementary domains of $Y$,
 a contradiction with $Y$ being full.
If $U\sm Y=\0$, then $U\cup D$ is a connected subset of $Y$,
 hence $U\subset D$, a contradiction with $U\cap\ol D=\0$.

Now, by the first paragraph, $\ol{D}\sm D=\ol{D}\cap X$ is compact.
Suppose that $\ol{D}\cap X$ is disconnected. Then there exists a
bounded component $U$ of $\C\sm (X\cup D)$ that at least partially
accumulates in $X$ and partially in $D$. Since $Y$ is full, then
$U\subset Y$; hence $U$ is a subset of a decoration that accumulates
(partially) to points of $D$. By the first paragraph this implies that
$U\subset D$, a contradiction. Thus, $\ol{D}\cap X$ is connected; then
$\ol{D}\cap X$ is a full continuum as both $X$ and $\ol{D}$ are full.
\end{proof}

Given a full continuum $X\subset \C$,
 we will use the inverse Riemann map $\psi: \C\sm X\to \C\sm \cdisk$
with real derivative at infinity.
Loosely, one can say that under the map $\psi$ the
continuum $X$ is replaced by the closed unit disk $\cdisk$ while the
rest of the plane is conformally deformed. Thus, under $\psi$ the
decorations become subsets of $\C\sm \cdisk$.

\begin{cor}\label{l:triv2}
Let $D$ be a decoration of $Y$ rel. $X$. Then $\ol{\psi(D)}\sm \psi(D)$ is a
 continuum $I_D\subset \uc$ (a circle arc, possibly degenerate, or the circle).
\end{cor}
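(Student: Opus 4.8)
The plan is to identify the set $I_D=\ol{\psi(D)}\sm\psi(D)$ with $\ol{\psi(D)}\cap\uc$, where $\psi\colon\C\sm X\to\C\sm\cdisk$ is the inverse Riemann map, and then to extract its connectedness from Lemma~\ref{l:triv}. It is convenient to regard $\psi$ as extended to a homeomorphism of the sphere $\wh{\C}=\C\cup\{\infty\}$ with $\psi(\infty)=\infty$. Throughout, recall from Lemma~\ref{l:triv} that $\ol D\sm D=\ol D\cap X$ is a nonempty continuum and $\ol D\cup X=X\cup D$ is a full continuum; equivalently, $\ol D\cap(\C\sm X)=D$.

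Since $D\subseteq Y\sm X$ is bounded and $\psi^{-1}(z)\to\infty$ as $z\to\infty$, the image $\psi(D)$ is bounded, so $E:=\ol{\psi(D)}$ is a continuum, lying in $\C\sm\D$ because $\psi(D)\subseteq\C\sm\cdisk$. The first step is to check that
\[
I_D\ :=\ \ol{\psi(D)}\sm\psi(D)\ =\ \ol{\psi(D)}\cap\uc .
\]
The inclusion ``$\supseteq$'' holds since $\psi(D)$ avoids $\uc$; for ``$\subseteq$'', let $p\in E\sm\psi(D)$ and suppose $p\notin\uc$. Then $p$ lies in $\C\sm\cdisk$, the domain of the homeomorphism $\psi^{-1}$, and writing $p=\lim\psi(d_n)$ with $d_n\in D$ we get $\psi^{-1}(p)=\lim d_n\in\ol D\cap(\C\sm X)=D$, so $p\in\psi(D)$, a contradiction. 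Hence $I_D$ is a compact subset of $\uc$; it is nonempty, for otherwise $E$ would be a compact subset of $\C\sm\cdisk$, making $\psi^{-1}(E)$ a compact subset of $\C\sm X$ containing $D$ and hence $\ol D$ --- impossible since $\ol D\cap X\ne\0$.

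The heart of the argument is the connectedness of $I_D$; everything else is routine point-set topology. Put $C:=\cdisk\cup E$. Since $I_D\subseteq\uc\subseteq\cdisk$ we have $C=\cdisk\cup\psi(D)$, and therefore $\C\sm C=(\C\sm\cdisk)\sm\psi(D)=\psi\bigl(\C\sm(X\cup D)\bigr)$, which is connected because $X\cup D$ is a full continuum and $\psi$ maps $\C\sm X$ homeomorphically onto $\C\sm\cdisk$; being unbounded, this set remains connected after adjoining $\infty$, so $\wh{\C}\sm C$ is a connected open subset of the topological disk $\wh{\C}\sm\cdisk$. Now assume, for contradiction, that $I_D$ is disconnected. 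Then $I_D$ is a disconnected compact proper subset of $\uc$, so one may pick $x,y\in\uc\sm I_D$ such that each of the two open arcs of $\uc\sm\{x,y\}$ meets $I_D$. Since $x,y\notin E=\ol{\psi(D)}$, both points are accessible from $\wh{\C}\sm C$, so they can be joined by a crosscut $\gamma$ of the disk $\wh{\C}\sm\cdisk$ with $\gamma\sm\{x,y\}\subseteq\wh{\C}\sm C$. Such a crosscut separates $\wh{\C}\sm\cdisk$ into two Jordan regions, the closure of each meeting $\uc$ in one of the two closed arcs cut off by $\{x,y\}$. As $\psi(D)$ is connected and disjoint from $\gamma$, it lies in one of these regions, say the one whose closure meets $\uc$ in the closed arc $A$; then $I_D=\ol{\psi(D)}\cap\uc\subseteq A$, and since $x,y\notin I_D$ this confines $I_D$ to a single open arc of $\uc\sm\{x,y\}$, contradicting the choice of $x,y$. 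Hence $I_D$ is connected.

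Finally, a nonempty compact connected subset of $\uc$ is a point, a proper closed arc, or all of $\uc$, as asserted. The only delicate step is the connectedness of $I_D$, where the hypothesis that $X$ and $Y$ are \emph{full} is indispensable: without it $\psi(D)$ could be a crosscut of $\C\sm\cdisk$ and $I_D$ a two-point set. (The connectedness of $I_D$ can also be obtained from the fact that $\cdisk\cup E$ is a full continuum, via the general principle that the intersection of two full continua whose union is a full continuum is connected --- e.g.\ through Mayer--Vietoris in \v{C}ech cohomology together with Alexander duality.)
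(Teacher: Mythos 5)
Your argument is correct and, in substance, simply supplies the details behind the paper's one-line proof (``Follows from Lemma \ref{l:triv}''): you use the conclusions of Lemma \ref{l:triv} --- that $D$ accumulates in $X$ and that $X\cup D$ is a full continuum --- to identify $I_D=\ol{\psi(D)}\cap\uc$, to see that $\wh\C\sm(\cdisk\cup\psi(D))$ is connected, and then to run a crosscut-separation argument forcing $I_D$ to be connected; this is exactly the natural path from Lemma \ref{l:triv} to the corollary. One small caveat concerning the parenthetical alternative: as phrased, the principle ``the intersection of two full continua whose union is a full continuum is connected'' requires $\ol{\psi(D)}$ itself to be full, which you did not verify and which can genuinely fail (if $\psi(D)$ spirals onto $\uc$ then $\D$ is a bounded complementary component of $\ol{\psi(D)}$, so $\ol{\psi(D)}$ is not full even though $I_D=\uc$ is connected); the Mayer--Vietoris/Alexander-duality argument does still work, but one should state the principle for two \emph{continua} $A$, $B$ (not necessarily full) with $A\cup B$ a full continuum and $A\cap B\ne\0$.
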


\begin{proof}
Follows from Lemma \ref{l:triv}.
\end{proof}

Observe that the set $I_D$ can, indeed, coincide with the entire unit circle
(e.g., $D$ can spiral onto $\cdisk$). The arcs $I_D\ne \uc$ are
also possible as $\psi(D)$ may approach an arc $I_D$ by imitating the
behavior of the function $\sin(1/x)$ as $x\to 0^+$. Moreover, two
distinct decorations $D$ and $T$ may well have equal arcs $I_D$ and
$I_T$, or it may be so that, say, $I_D\subsetneqq I_T$, or $I_D$ and
$I_T$ can have a non-trivial intersection not coinciding with either
arc (all these examples can be constructed by varying the behavior of
components similar to the behavior function $\sin(1/x)$ as $x\to 0^+$).
However, there are some cases in which one can guarantee that each
decoration has a degenerate arc $I_D$.

\medskip

\noindent\textbf{Ray Assumption on $X$ and $Y$.}
\emph{
 There is a dense set $\mathcal A\subset \uc$ and a family of curves in $\C\sm\ol\D$
 each of which accumulates on a point of $\mathcal A$ and disjoint from $\psi(Y)$.}

\medskip

If the Ray Assumption holds for $X$ and $Y$, and there is a
neighborhood $U$ of $Y$ and a homeomorphism $\varphi:U\to W\subset \C$, then
the Ray Assumption holds for $\varphi(X)\subset \varphi(Y)$ too.

\begin{lem}\label{l:triv3}
Suppose that the Ray Assumption holds for $X\subset Y$.
Then, for every decoration $D$, the arc $I_D$ is degenerate.
\end{lem}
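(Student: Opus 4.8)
The plan is to argue by contradiction: assuming $I_D$ is non-degenerate, I will use the density in the Ray Assumption to produce two of the given ``rays'' landing inside the interior of $I_D$, and then trap the connected set $\psi(D)$ on one side of a Jordan curve built from those two rays, contradicting the fact that $\psi(D)$ must accumulate on \emph{all} of $I_D$.

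So suppose $I_D$ is non-degenerate. By Corollary \ref{l:triv2} it is then either a non-degenerate sub-arc of $\uc$ or all of $\uc$; in either case its interior in $\uc$ is a non-empty open arc. Since $\mathcal A$ is dense in $\uc$, pick two distinct $\xi_1,\xi_2\in\mathcal A$ lying in the interior of $I_D$, and let $R_1,R_2\subset\C\sm\cdisk$ be the corresponding curves of the Ray Assumption, so $R_i$ accumulates on $\xi_i$ and $R_i\cap\psi(Y)=\emptyset$. As $D\subset Y\sm X$ we have $\psi(D)\subset\psi(Y)$, hence $R_i\cap\psi(D)=\emptyset$; and since $R_i$ misses $\uc$ it also misses $\ol{\psi(D)}$, $I_D$ and $\cdisk$. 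Let $A\subset I_D$ be one of the two arcs of $\uc$ bounded by $\xi_1,\xi_2$, put $A'=\uc\sm\inn(A)$, and fix a point $\xi'\in I_D$ interior to $A'$ (this exists because $\xi_1,\xi_2$ are interior to $I_D$, so $A'\cap I_D$ is a non-degenerate closed set and loses only the two points $\xi_1,\xi_2$ when passing to $\inn(A')$). Also fix any point $\zeta$ in the interior of $A$, so $\zeta\in I_D$.

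Taking $R_1,R_2$ to be disjoint simple arcs — which is the case in the applications, where they are (images of) external rays — form a Jordan curve $J_0$ from $R_1$, the arc $A$, $R_2$, the points $\xi_1,\xi_2$, and, if necessary, an arc joining the outer ends of $R_1,R_2$ routed through $\C\sm(\cdisk\cup\ol{\psi(D)})$ far from the origin (possible since $\ol{\psi(D)}$ is bounded; if the $R_i$ run out to $\infty$ one closes up there). Let $\Omega,\Omega'$ be the two components of $\hat\C\sm J_0$. The connected set $\cdisk\sm A=\disk\cup\inn(A')$ is disjoint from $J_0$, so lies in one of them, say $\Omega'$. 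Now $\psi(D)$ is connected and disjoint from $J_0$, hence contained in $\Omega$ or $\Omega'$. Since $\zeta$ is interior to $A$ and $\zeta\notin R_1\cup R_2$, a small disk about $\zeta$ meets $J_0$ only in the sub-arc $A\subset\uc$, so locally the component on the $\cdisk$-side is $\Omega'$ while the one on the $\C\sm\cdisk$-side is $\Omega$; as $\psi(D)\subset\C\sm\cdisk$ accumulates at $\zeta$ (because $\zeta\in I_D=\ol{\psi(D)}\sm\psi(D)$ is a limit point of $\psi(D)$), it meets that outer side, so $\psi(D)\subset\Omega$. On the other hand $\xi'\in\inn(A')\cap I_D$ lies off $J_0$, and a small disk about it avoids $J_0$ and meets $\disk\subset\cdisk\sm A\subset\Omega'$, so that disk lies in $\Omega'$ and is disjoint from $\psi(D)\subset\Omega$. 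Hence $\psi(D)$ does not accumulate at $\xi'$, contradicting $\xi'\in I_D=\ol{\psi(D)}\sm\psi(D)$. Therefore $I_D$ is degenerate.

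I expect the only genuinely delicate point to be the construction of $J_0$: one must know that two of the curves in the Ray Assumption can be chosen to be disjoint simple arcs and closed up into an honest Jordan curve (at a common far endpoint or near $\infty$). This is immediate when the curves are pairwise disjoint external rays, which is the relevant situation. Everything else is routine plane topology — a connected set disjoint from a Jordan curve lies in a single complementary domain, and a local inspection at the interior point $\zeta$ of $A$ identifies which domain — so I would not belabour those steps.
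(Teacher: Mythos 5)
Your argument is correct and is essentially the paper's proof: choose two points of $\mathcal A$ in the interior of $I_D$, use the corresponding curves to separate $\C\sm\cdisk$ into two regions, and observe that the connected set $\psi(D)$ must lie entirely in one of them, preventing it from accumulating on both arcs of $I_D$. The subtlety you flag---that the curves of the Ray Assumption must be taken as disjoint simple arcs reaching far enough out for the separating Jordan curve $J_0$ to exist---is also silently used in the paper's phrase ``the two disjoint open regions formed by the curves $R_x$ and $R_y$,'' so you have, if anything, been more explicit than the original.
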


\begin{proof}
Suppose that $I_D$ is a non-degenerate arc.
Choose two points $x$, $y$ so that $I_D$ intersects both components of $\uc\sm\{x,y\}$.
The set $\psi(D)$ is contained in one of the two disjoint open regions formed by the curves $R_x$ and $R_y$
 in the complement of the closed unit disk.
It follows that $\psi(D)$ can accumulate to only one of the circle arcs formed by the points $x$, $y$, a contradiction.
\end{proof}

Since we study decorations in the complex dynamical setting,
making the Ray Assumption is not overly restrictive.

\begin{lem}\label{l:raya1}
Suppose that $K^*$ is a connected invariant filled PL Julia set contained
in a connected filled Julia set $K$ of a polynomial $P$. Then
$K^*\subset K$ satisfy the Ray Assumption.
\end{lem}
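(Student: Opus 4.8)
The plan is to take for the required curves the $\psi$-images of those external rays of $P$ (the $K$-rays $R(\al)$) that land at points of $\d K^*$. Recall that $\psi=\psi_{K^*}\colon\C\sm K^*\to\C\sm\cdisk$ is the inverse Riemann map, so $\psi(Y)$ in the Ray Assumption means $\psi(K\sm K^*)$, the union of the images of the decorations. A $K$-ray lies in $\C\sm K$, hence misses $K\sm K^*$; so its $\psi$-image is a curve in $\C\sm\cdisk$ disjoint from $\psi(Y)$, running from $\psi(\infty)=\infty$ toward $\d K^*$, and distinct $K$-rays give curves meeting only at $\infty$. Three things then need checking: (i) such a $K$-ray lands at a single point of $\uc$; (ii) $K$-rays landing at $\d K^*$ exist; (iii) their landing points are dense in $\uc$.

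Write $f^*=P|_{U^*}$, so $K^*=K(f^*)$ and $\d K^*=J(f^*)$. By the Straightening Theorem, repelling periodic points are dense in $J(f^*)$; each such point is a repelling periodic point of $P$ with the same multiplier, hence lies in $J(P)=\d K$ and is the landing point of a periodic $K$-ray by Douady--Hubbard. Thus $\d K^*\subseteq\d K$ and (ii) holds. For (i), if $R(\al)\subset\C\sm K\subseteq\C\sm K^*$ lands at $z\in\d K^*=\d(\C\sm K^*)$, then by Lindel\"of's theorem (basic prime-end theory applied to $\psi$) the curve $\psi(R(\al))$ converges to a single point of $\uc$. So it remains to prove (iii) for $\mathcal A$, the set of landing points of the curves $\psi(R(\al))$ over $K$-rays $R(\al)$ landing at $\d K^*$.

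This is the main obstacle, and it genuinely uses that $K,K^*$ are Julia sets: for abstract full continua $X\subset Y$ the Ray Assumption can fail (take $X=[0,1]$ and $Y$ a half-disk glued along $X$; the one decoration shields an entire arc of $\uc$). I would argue via the dynamics on the prime ends of $\C\sm K^*$. This prime-end circle is coordinatized by $\uc$ in two ways: by $\psi$, and by the QL parametrization $\psi^*$ of $U^*\sm K^*$ from the straightening. Since $U^*\sm K^*$ contains a collar of $\d K^*$, each of these extends to a homeomorphism of the prime-end circle onto $\uc$, and since $\psi$ is conformal while $\psi^*$ is quasiconformal the two coordinates differ by a homeomorphism $g$ of $\uc$. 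Now $\psi^*$ conjugates $f^*$ to $w\mapsto w^{\deg f^*}$, so in the $\psi^*$-coordinate the map induced by $f^*$ on prime ends is $\theta\mapsto(\deg f^*)\theta$; hence in the $\psi$-coordinate it is a circle map $T$ conjugate via $g$ to angle multiplication by $\deg f^*$. In particular $\bigcup_{k\ge0}T^{-k}(\zeta)$ is dense in $\uc$ for every $\zeta\in\uc$.

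To finish, fix a repelling periodic point $z_0$ of $f^*$ and, by (i)--(ii), a periodic $K$-ray landing at $z_0$; let $\zeta_0\in\uc$ be the prime end it realizes. For each $k$, pulling this ray back by $P^k$ produces $K$-rays landing at the $P^k$-preimages of $z_0$; those landing in $(f^*)^{-k}(z_0)\subseteq\d K^*$ number $(\deg f^*)^k$ (with multiplicity) and realize prime ends $\eta$ with $T^k(\eta)=\zeta_0$, because near $\d K^*$ the map $P$ equals $f^*$ and preserves $\C\sm K$, so the action of $P$ on accesses to $\d K^*$ through $\C\sm K$ agrees with the prime-end map $T$. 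Since $T^{-k}(\zeta_0)$ has exactly $(\deg f^*)^k$ elements, this recovers all of $T^{-k}(\zeta_0)$ except for prime ends whose $T$-orbit meets a critical point of $P$, of which there are countably many. Hence $\mathcal A$ contains $\bigl(\bigcup_{k\ge 0}T^{-k}(\zeta_0)\bigr)$ minus a countable set and is therefore dense in $\uc$, verifying the Ray Assumption. I expect the real work to be in (a) making precise the comparison of the two prime-end coordinates and the extension of $\psi^*$ across the prime-end circle, and (b) justifying how $P^k$ transports the relevant pullback $K$-rays and their accesses, with the bookkeeping around critical points and multiplicities.
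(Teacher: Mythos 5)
Your approach is essentially the paper's. Both proofs take the needed curves to be (images of) $K$-rays landing at repelling periodic points of $J(f^*)$, and both derive density from the fact that the straightening coordinate $\psi^*$ conjugates $f$ near $K^*$ with $z\mapsto z^{\deg f^*}$, whose backward orbits are dense in $\uc$. The paper is very terse: it works with $\psi^*$ directly and invokes the remark following the Ray Assumption (which amounts to the observation that the conformal parametrization $\psi$ and the quasiconformal $\psi^*$ differ near $\uc$ by a circle homeomorphism, so the Ray Assumption may be tested in either coordinate). You rederive exactly that comparison as your homeomorphism $g$, and you re-express the induced prime-end dynamics as the circle map $T$ conjugate to multiplication by $\deg f^*$. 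So the route is the same; yours is just spelled out at greater length, and in fact supplies details the paper elides (the paper simply asserts ``these points are dense'').

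There is, however, a genuine slip at the end of your density step (iii). You conclude that $\mathcal A$ contains $\bigcup_{k\ge 0}T^{-k}(\zeta_0)$ minus a countable set ``and is therefore dense''; but $\bigcup_{k}T^{-k}(\zeta_0)$ is itself countable, so deleting a countable set from it can destroy density outright. The good news is that the exception you are carving out is not actually there. Since $K$ is connected, both critical points of $P$ lie in $K$, so $P\colon\C\sm K\to\C\sm K$ is an unbranched covering; every pullback of a $K$-ray is again a $K$-ray. In the $\psi^*$-coordinate this pullback near $\uc$ is literally pulling back under $z\mapsto z^{\deg f^*}$, so \emph{every} $\eta\in T^{-k}(\zeta_0)$ is realized by some pullback $K$-ray, with no critical-orbit exception (the branching of $f^*$ at $\omega_1$ can identify preimage \emph{points} of $z_0$, but the $\deg f^*$ distinct accesses in $\uc$-coordinate remain distinct). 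Also, be careful not to conflate the count of preimage points $(f^*)^{-k}(z_0)$ with the count of pullback rays/prime ends; it is the latter that matches $|T^{-k}(\zeta_0)|=(\deg f^*)^k$. With the spurious exception removed, your argument closes correctly and matches the paper's.
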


\begin{proof}
Choose a periodic repelling point $x\in K^*$ and a $K$-ray $R$ landing at $x$.
Consider the map $\psi^*:U^*\sm K^*\to\C\sm\cdisk$ introduced above
 (it conjugates $f$ with $z\mapsto z^2$).
The curve $\psi^*(R)$ lands on a point of $\uc$, and these points are dense in $\uc$.
The remark after we define the Ray Assumption now shows, that
$K^*\subset K$ satisfy it.
\end{proof}

\section{Cubic parameter slices}
\label{s:para-sli}
Let $\Fc$ be the space of monic polynomials
$$
f_{\lambda,b}(z)=\lambda z+b z^2+z^3,\quad \lambda\in \C,\quad b\in \C.
$$
An affine change of variables reduces any cubic polynomial to the form
$f_{\lambda,b}$. Clearly, $0$ is a fixed point of every polynomial in
$\Fc$. Define the \emph{$\la$-slice} $\Fc_\lambda$ of $\Fc$ as the
space of all polynomials $g\in\Fc$ with $g'(0)=\lambda$, i.e.,
polynomials $f(z)=\lambda z+b z^2+z^3$ with fixed $\lambda\in \C$.
Also, denote by $\Fc_{nr}$
the space of polynomials $f_{\lambda,b}$ with $|\la|\le 1$ (``nr''
from ``\textbf{n}on-\textbf{r}epelling'').
For a fixed $\la$ with $|\la|\le 1$ the \emph{$\la$-connectedness locus
$\Cc_\lambda$}, of the \emph{$\la$-slice} of the cubic connectedness
locus is defined  as the set of all $f\in\Fc_\la$ such that $K(f)$ is connected.
This is a full continuum \cite{BrHu, Z}. We study sets
$\Cc_\la\subset \Fc_\la$ as we want to investigate to what extent
results concerning the quadratic Mandelbrot set $\Mc_2$ hold for $\Cc_\la$.

Fig. \ref{fig:slices} shows slices $\Fc_\la$ for some values of $\la$,
 together with the corresponding connectedness loci $\Cc_\la$. 

\begin{figure}
  \centering
  \includegraphics[width=\textwidth]{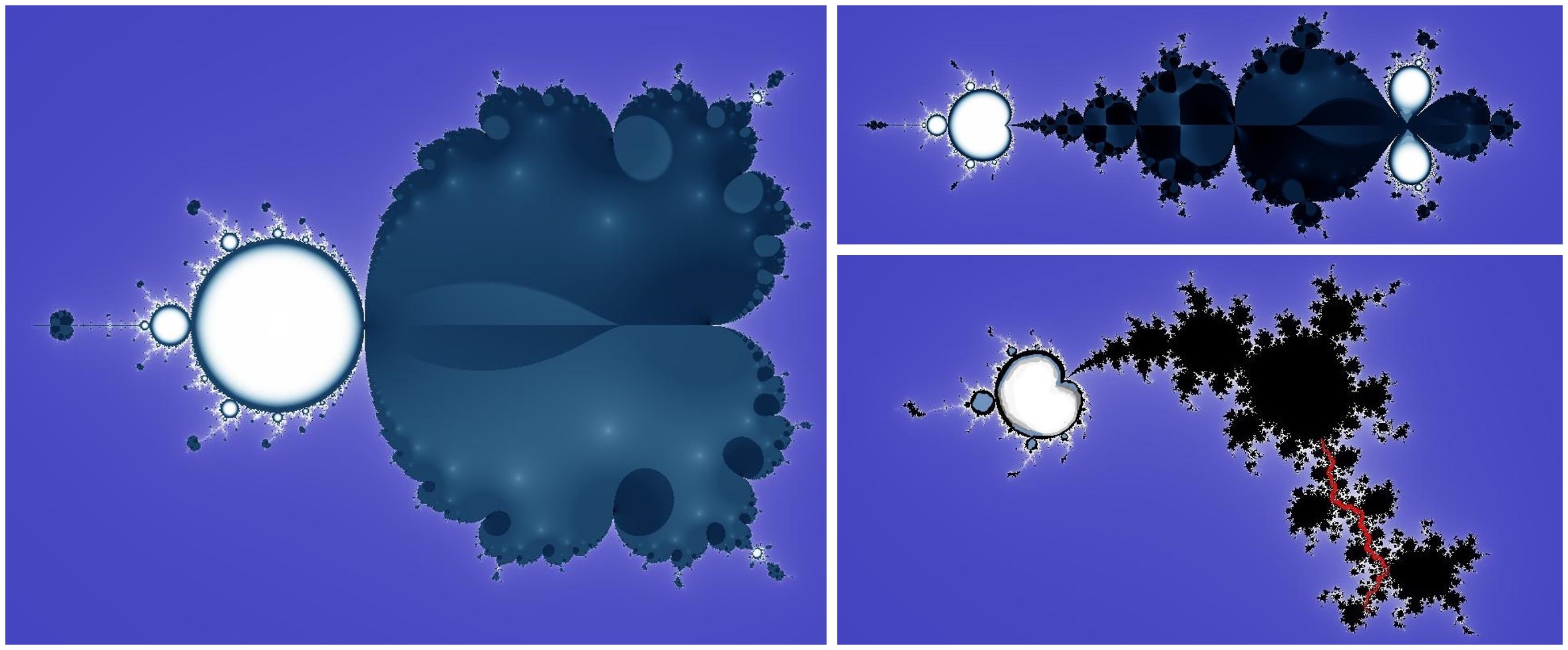}
  \caption{\small Some parameter slices $\Fc_\la$.
  The figures show the planes of parameter $a=b^2$, rather than $b$, to get rid of trivial symmetry.
  ``Background'' points represent the complement of $\Cc_\la$.
  Left: $\la=1$; the distorted ``cauliflower'' on the right is $\Pc_\la$; whereas the white ``bulb'' on the left
  is where a renormalization copy of the Mandelbrot set starts; the ``bulb'' itself representing the (baby) main cardioid.
  Top right: $\la=-1$; the distorted ``fat basilica'' represents $\Pc_\la$.
  Bottom right: $\la=e^{\pi i\sqrt{2}}$; the set $\Pc_\la$ looks like $K(\la z+z^2)$,
  in which the central Fatou component is replaced with a simple arc (\emph{Zakeri curve}, cf. \cite{Z} and \cite{bost22}).}
  \label{fig:slices}
\end{figure}

\subsection{Immediately renormalizable polynomials}
\label{ss:imre-vs-hyp}
Let us describe small perturbations of QL invariant filled Julia
sets $K^*\ni 0$ of $f\in \Fc_{nr}$ that contain $0$ (assuming $K^*$ exists for a given $f$).
The \emph{quadratic representative} of $f^*$ is
 the polynomial $z^2+c$ hybrid conjugate to $f^*$.

\begin{lem}\label{l:cnct} Let $f\in \Fc_{nr}$ be a polynomial, $K^*$
be a QL invariant filled Julia set containing $0$. Then
$K^*$ is connected. Every cubic polynomial $g\in \Fc_{nr}$ sufficiently
close to $f$ has a QL Julia set $K^*(g)$ containing $0$; the
set $K^*(g)$ is also connected. Moreover, if $0$ is an attracting
fixed point for $g$, then $g$ has a QL Julia set, which is a
Jordan curve; in particular, $[g]\notin \phd_3$.
\end{lem}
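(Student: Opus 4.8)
The plan is to dispose of the four assertions in turn; the two connectedness statements are one argument, which I would run first. As $f\in\Fc_{nr}$, the point $0$ is a non-repelling fixed point of $f$, hence of $f^*=f|_{U^*}$, and $0\in K^*=K(f^*)$. If $K^*$ were disconnected, then the (unique) critical point of the degree-two map $f^*$ would eventually leave $U^*$, and the basic theory of quadratic-like maps (Douady--Hubbard \cite{DH-pl}; see also \cite{mcm94}) would make $K^*$ a Cantor set on which $f^*$ is uniformly expanding; in particular every periodic point of $f^*$ in $K^*$, and $0$ among them, would be repelling --- a contradiction. Hence $K^*$ is connected; and since $0$ is non-repelling for every $g\in\Fc_{nr}$, the identical argument will give connectedness of $K^*(g)$ once the latter has been produced.

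To produce the quadratic-like restriction of a nearby $g$, I would first normalize $f^*$: replacing $U^*$ by $(f^*)^{-n}(U^*)$ for large $n$ and smoothing its boundary, I may assume $\ol{U^*}\Subset W^*:=f(U^*)$, that $\bd U^*$ is a real-analytic Jordan curve, and --- as $f^*$ has degree two, so that $U^*$ contains only the critical point lying in $K^*$ --- that the other critical point $\om_2$ of $f$ satisfies $\om_2\notin\ol{U^*}$; thus $U^*$ is the component of $f^{-1}(W^*)$ containing $K^*$. Then I would invoke the standard persistence of polynomial-like maps under perturbation (\cite{DH-pl}): for $g\in\Fc_{nr}$ sufficiently close to $f$ one obtains, by tracking the component $V=V(g)$ of $g^{-1}(W^*)$ that sits near $U^*$, a quadratic-like restriction $g^*=g|_V$ of degree two with $\ol V\Subset W^*=g(V)$ and with $K^*(g):=K(g^*)$ close to $K^*$. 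Since $g(0)=0$ and $0\in K^*\subset V$ we get $0\in K^*(g)$; since $\om_2(g)$ is close to $\om_2\notin\ol{U^*}$ it does not lie in $\ol V$, so $\om_2(g)$ is the critical point of $g$ outside $K^*(g)$; and $K^*(g)$ is connected by the first paragraph.

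Finally, assume $0$ is attracting for $g$. By the Straightening Theorem $g^*$ is hybrid conjugate to a quadratic polynomial $z^2+c$; the conjugacy --- a homeomorphism near the filled Julia sets --- carries the attracting fixed point $0$ to an attracting fixed point of $z^2+c$, so $c$ lies in the open main cardioid and $K(z^2+c)$ is a closed Jordan disk. Hence $K^*(g)$ is a closed Jordan disk and its Julia set $J(g^*)=\bd K^*(g)$ is a Jordan curve --- the QL Julia set promised by the statement. To conclude $[g]\notin\phd_3$, suppose the contrary. Then $J(g)$ is a Jordan curve and $K(g)$ a closed Jordan disk containing both critical points of $g$. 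As $\bd K^*(g)=J(g^*)\subset J(g)$ (a standard property of QL restrictions, \cite{DH-pl}) and both are Jordan curves, they coincide, forcing $K^*(g)=K(g)$; but $\om_2(g)\in K(g)\sm K^*(g)$ --- it lies in $K(g)$ yet outside $\ol V\supseteq K^*(g)$ --- a contradiction.

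The only step requiring genuine care is the construction of $V(g)$: one must be certain that the quadratic-like structure --- the degree, the properness, and the nesting $\ol V\Subset g(V)$ --- actually survives the perturbation. I would secure this by fixing the target $W^*=f(U^*)$ once and for all and following the component of $g^{-1}(W^*)$ near $U^*$; with that in hand, the remainder of the proof uses only elementary plane topology, the basic theory of quadratic-like maps, and the Straightening Theorem.
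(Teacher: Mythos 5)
Your proof is correct and follows essentially the same route as the paper: connectedness from the non-repelling fixed point $0$, persistence of the quadratic-like restriction by tracking a component of $g^{-1}$ of a fixed target disk, and the final conclusion via the Straightening Theorem. The only real variations are local: for connectedness the paper invokes the Fatou--Shishikura inequality for the quadratic representative of $f^*$, where you argue that a Cantor filled Julia set of a quadratic-like map admits no non-repelling cycle; and for the last assertion the paper identifies $K^*(g)$ with the closure of the immediate basin of $0$ (hence $\om_2\notin K^*(g)$ rules out $\phd_3$), whereas you derive the contradiction from the containment of Jordan curves $J(g^*)\subset J(g)$. Both substitutions are sound and of comparable length, so this is the same proof in substance.
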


\begin{proof}
Since $0$ is non-repelling, then, by the Fatou-Shishikura inequality,
the critical point of the quadratic representative of $f^*$ cannot escape.
Hence $K^*$ is connected. Let $f^*:U^*\to V^*, f^*=f|_{U^*}$ be the
associated QL map.
Replacing $U^*$ with a smaller PL neighborhood if necessary, assume that
 there are no critical points of $f$ on the boundary of $V^*$.
If $g$ is very close to $f$, then $0\in U^*$ and, moreover,
 there is a new Jordan disk $W^*$ such that $g:W^*\to V^*$ is a 2-1 branched covering.
By the above, the associated QL Julia set $K^*(g)$ is connected.
Finally, if $f_i\to f$ are polynomials with $0$
as an  attracting fixed point, then, by the above, for large $i$, the
polynomial $f_i$ has a QL filled Julia set coinciding with
the closure of the basin of immediate attraction of $0$.
Therefore, $[f_i]\notin \phd_3$ for large $i$, as desired.
\end{proof}

Call a cubic polynomial $f\in\Fc_{nr}$ \emph{immediately renormalizable}
 if there are Jordan domains $U^*$ and $V^*$ such that
$0\in U^*$, and $f:U^*\to V^*$ is a QL map; denote by
$K^*$ the filled QL Julia set of $f^*=f|_{U^*}$ (in
 what follows we \emph{always} use the notation $U^*,$ $V^*,$ $f^*$ and $K^*$ when talking about
immediately renormalizable polynomials).
Denote the set of all \textbf{im}mediately
\textbf{r}enormalizable polynomials by $\imr$, and let $\imr_\la=\Fc_\la\cap
\imr$. Let $\Pc$ be the set of polynomials $f\in\Fc_{nr}$ such that there are polynomials $g\in\Fc$ arbitrarily
close to $f$ with $|g'(0)|<1$ and $[g]\in\phd_3$. Then clearly
$[f]\in\ol\phd_3$ (observe, that there may be polynomials outside of
$\Pc$ whose classes are also in $\ol\phd_3$). Also, set
$\Pc_\la=\Pc\cap \Fc_\la$.
In Fig. \ref{fig:slices}, subsects $\Pc_\la\subset\Cc_\la$ can be seen, as explained in the caption.

Corollary \ref{c:cnct} follows from Lemma \ref{l:cnct}.

\begin{cor}\label{c:cnct}
If $f\in \imr$, then $K^*$ is connected.
The set $\imr$ is open in $\Fc_{nr}$.
The set $\imr_\la$ is open in $\Fc_\la$ for any $\la$, $|\la|\le 1$.
The sets $\imr$ and $\Pc$ are disjoint.
\end{cor}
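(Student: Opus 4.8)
The plan is to read off all four assertions directly from Lemma \ref{l:cnct} together with the definitions of $\imr$ and $\Pc$; there is essentially nothing to prove beyond bookkeeping.

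First I would note that if $f\in\imr$ then, by definition, there are Jordan domains $U^*\ni 0$ and $V^*$ with $f\colon U^*\to V^*$ a QL map whose filled Julia set is $K^*$. Since every polynomial in $\Fc$ fixes $0$ and $0\in U^*$, the orbit of $0$ never leaves $U^*$, so $0\in K^*$; thus $K^*$ is a QL invariant filled Julia set containing $0$, and the first sentence of Lemma \ref{l:cnct} gives that $K^*$ is connected. For the openness of $\imr$ in $\Fc_{nr}$, fix $f\in\imr$ and apply Lemma \ref{l:cnct} to $f$ and its $K^*$: this yields an $\Fc_{nr}$-neighborhood of $f$ each of whose members $g$ carries a QL Julia set $K^*(g)\ni 0$, hence lies in $\imr$ by definition. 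Openness of $\imr_\la$ in $\Fc_\la$ is then automatic: since $|\la|\le 1$ forces $\Fc_\la\subset\Fc_{nr}$, the set $\imr_\la=\imr\cap\Fc_\la$ is the trace on $\Fc_\la$ of an open subset of $\Fc_{nr}$, hence open in the subspace $\Fc_\la$.

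Finally, for the disjointness of $\imr$ and $\Pc$ I would argue by contradiction: suppose $f\in\imr\cap\Pc$. The last clause of Lemma \ref{l:cnct}, applied to $f$, produces a neighborhood $N$ of $f$ in $\Fc_{nr}$ such that any $g\in N$ for which $0$ is an attracting fixed point satisfies $[g]\notin\phd_3$. On the other hand, $f\in\Pc$ means there exist polynomials $g\in\Fc$ arbitrarily close to $f$ with $|g'(0)|<1$ — so $g\in\Fc_{nr}$ and $0$ is an attracting fixed point of $g$ — and with $[g]\in\phd_3$; choosing such a $g$ inside $N$ contradicts the previous sentence. Hence $\imr\cap\Pc=\emptyset$. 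I do not anticipate a genuine obstacle; the only point requiring a little care is verifying that the polynomials furnished by $f\in\Pc$ can be taken inside the neighborhood $N$ produced by Lemma \ref{l:cnct}, which is immediate because membership in $\Pc$ guarantees such polynomials arbitrarily near $f$ while $N$ is a full neighborhood of $f$.
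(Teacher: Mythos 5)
Your proposal is correct and matches the paper's intent exactly: the paper simply states that the corollary ``follows from Lemma \ref{l:cnct}'', and your write-up spells out the straightforward bookkeeping that makes each of the four claims a direct consequence of that lemma and the definitions of $\imr$ and $\Pc$. The only slightly nontrivial observations — that $0\in K^*$ because $0$ is a fixed point lying in $U^*$, that openness of $\imr_\la$ in $\Fc_\la$ is the trace of openness of $\imr$ in $\Fc_{nr}$, and that the polynomials furnished by $f\in\Pc$ already lie in $\Fc_{nr}$ and can be taken inside the neighborhood produced by Lemma \ref{l:cnct} — are all handled correctly.
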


We want to study the sets $\imr$ and $\Pc$; Corollary \ref{c:cnct} shows
that they are disjoint, so this investigation may be done in parallel.
The sets $\imr_\la$ and $\Pc_\la$ are well understood
 in the case $|\la|<1$.
D. Faught in his 1992 thesis \cite{fau92} sketched a proof of the fact that $\Pc_0$ is a Jordan disk.
A formal and complete argument along the same lines was given later by P. Roesch \cite{roe06}.
Results of L. Tan and C. Petersen \cite{tp06} (based on a holomorphic motion
 between parameter slices) allow one to justify that the description of $\Pc_0$
 carries over to $\Pc_\la$ for all $\la$ in the open unit disk.

\begin{thm}[\cite{fau92,roe06,tp06}]\label{t:roesch}
The set $\Pc_\la$ is a Jordan disk for any $\la$ with $|\la|<1$.
\end{thm}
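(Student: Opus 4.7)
The plan is to first establish the result for $\la=0$ and then to transfer it to all $\la$ with $|\la|<1$ via a holomorphic motion between parameter slices.

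For $\la=0$ every polynomial $f\in\Fc_0$ has the form $f(z)=bz^2+z^3$; the point $0$ is a superattracting fixed point of local degree two (for $b\ne 0$), and the other critical point is $\omega=-2b/3$. By definition, $[f]\in\phd_3$ iff $\omega$ belongs to the immediate basin $B_0^*(f)$ of $0$. Since $|\la|=0<1$, one checks directly from the definition of $\Pc$ that $\Pc_0$ coincides with the closure in $\Fc_0$ of the open set $\phd_3\cap\Fc_0$. So the strategy is to uniformize $\phd_3\cap\Fc_0$ explicitly as a Jordan disk, and then to show that its closure remains a Jordan disk.

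The uniformization is built from the B\"ottcher coordinate at $0$. For each $f\in\phd_3\cap\Fc_0$, let $\phi_f:B_0^*(f)\to\D$ be the B\"ottcher coordinate normalized so that $\phi_f\circ f=\phi_f^2$; it depends holomorphically on $f$. Define the \emph{critical value map} $\Phi:\phd_3\cap\Fc_0\to\D$ by $\Phi(f)=\phi_f(f(\omega))$. Then $\Phi$ is holomorphic and proper, and a standard counting argument (along the lines of Douady--Hubbard's uniformization of the main cardioid of $\Mc_2$) shows that $\Phi$ has degree one; hence it is a biholomorphism onto $\D$. The deep step, and the main obstacle of the entire argument, is to extend $\Phi^{-1}$ continuously to $\cdisk$ with homeomorphic image $\ol{\Pc_0}$. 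For this one runs a parameter version of the Yoccoz puzzle, transferred from the dynamical plane via a careful analysis of the rays accumulating on $\d B_0^*(f)$; the argument must treat parabolic, Cremer, Siegel, and renormalizable boundary parameters case by case, and in each case invoke a rigidity statement pinning down the boundary parameter from the combinatorics. This is the content of Faught's thesis \cite{fau92} and its published elaboration by Roesch \cite{roe06}.

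Once $\Pc_0$ is known to be a Jordan disk, I would invoke the work of Tan and Petersen \cite{tp06}: they construct a holomorphic motion over $\la\in\D$ of the dynamical structures (external and internal rays, repelling periodic cycles, and the attracting basin of the distinguished fixed point) that underlie both the open set $\phd_3\cap\Fc_\la$ and the boundary dynamics on $\d\Pc_\la$. The $\la$-lemma, together with the transversality properties of this motion at the parameter level, then propagates the Jordan disk structure from $\la=0$ to every $\la$ with $|\la|<1$, completing the proof.
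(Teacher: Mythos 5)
This is an external result that the paper does not prove: it simply cites \cite{fau92,roe06,tp06}, attributing the $\la=0$ case to Faught/Roesch and the extension to all $|\la|<1$ to Tan--Petersen's holomorphic motion between parameter slices. Your outline reproduces exactly that two-step route (uniformization of the central hyperbolic component plus parameter-puzzle local connectivity for $\Pc_0$, then the holomorphic motion), deferring the hard steps to the same references, so it matches the paper's treatment.
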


Let us combine this with \cite{bopt16a} where sufficient
conditions on polynomials for being immediately renormalizable are given.

\begin{thm}[\cite{bopt16a}]\label{t:when-imr}
If $f\in \Fc_\la$, $|\la|\le 1$, belongs to the unbounded complementary
 component of $\Pc_\la$ in $\Fc_\la$, then $f$ is immediately renormalizable.
\end{thm}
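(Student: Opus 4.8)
The plan is to fix $f\in\Fc_\la$ with $|\la|\le 1$ lying in the unbounded complementary component of $\Pc_\la$ in $\Fc_\la$ and to produce explicitly the Jordan domains $U^*$, $V^*$ realizing the QL structure. First I would recall from \cite{bopt16a} the precise sufficient condition proved there (which the theorem is simply citing): it describes, via a combinatorial/geometric criterion on how the critical orbit and the co-critical data of $f$ sit relative to the fixed point $0$, exactly when one can find a QL restriction around $0$. So the first step is to verify that membership in the unbounded complementary component of $\Pc_\la$ forces that criterion to hold. Since $f\notin\Pc_\la$, the class $[f]$ is \emph{not} approximable by hyperbolic maps with attracting fixed point of multiplier $<1$ whose classes lie in $\phd_3$; combined with $|\la|\le1$ this rules out the ``Jordan curve'' QL picture described in Lemma \ref{l:cnct} and pushes $f$ into the regime where the renormalization locus $\imr_\la$ lives.

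The key geometric input is that $\Pc_\la$, together with the hyperbolic components attached to it, occupies a specific region of $\Fc_\la$, and its unbounded complement is where the two critical points genuinely split: one critical point $0$-side feeds a quadratic-like dynamics, while the other escapes the would-be QL neighborhood. Concretely I would argue as follows. Take $f$ in the unbounded complementary component $E$ of $\Pc_\la$. By the openness statements in Corollary \ref{c:cnct} and by connectivity of $E$, it suffices to exhibit a single map in $E$ (or in each connected piece of $E$, but $E$ is connected being the unbounded complementary component) that is immediately renormalizable and then propagate renormalizability along $E$ using a holomorphic-motion / tube argument: as long as we stay in $E$ the QL neighborhoods $U^*$, $V^*$ can be continued without the outer critical point ever hitting $\partial V^*$, because hitting $\partial V^*$ is a non-generic codimension-one event whose closure would have to meet $\ol\Pc_\la$ or the escape locus, neither of which touches $E$.

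More precisely, the steps in order: (1) cite the exact result of \cite{bopt16a} giving a checkable condition $(\ast)$ implying $f\in\imr$; (2) show every $f\in E$ satisfies $(\ast)$ — this is where I would use that $f\notin\ol\Pc_\la$ and $|\la|\le1$ to locate the non-escaping critical point near the parabolic/attracting fixed point's basin and the escaping one outside, matching the hypotheses of $(\ast)$; (3) invoke the Straightening Theorem to obtain the quadratic representative $z^2+c$ and Lemma \ref{l:cnct}/Corollary \ref{c:cnct} to know $K^*$ is connected; (4) conclude $f\in\imr_\la\subset\imr$. Alternatively, if $(\ast)$ is phrased purely in terms of $f$ lying outside $\Pc_\la$ in the appropriate component, step (2) is immediate and the theorem is essentially a restatement packaged with Theorem \ref{t:roesch} to know $\Pc_\la$ is a nice Jordan disk (for $|\la|<1$) so that ``the unbounded complementary component'' is unambiguous; for $|\la|=1$ one uses the boundary behavior.

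The hard part will be step (2): translating the parameter-space condition ``$f$ is in the unbounded complementary component of $\Pc_\la$'' into the dynamical condition that \cite{bopt16a} actually needs. This requires understanding the boundary $\partial\Pc_\la$ well enough to know that crossing out of $\Pc_\la$ (into the unbounded side, not into an attached bulb) is precisely the transition at which a QL restriction around $0$ becomes available — i.e., that the escaping critical point clears the relevant quadratic-like neighborhood exactly on $E$. If \cite{bopt16a} already states its criterion in these parameter-slice terms, then there is essentially nothing to prove beyond bookkeeping with Theorem \ref{t:roesch}; if it states it dynamically, one must supply the dictionary, and that dictionary — controlling how $V^*$ and the outer critical value move as $f$ varies over $E$ without any collision — is the real content.
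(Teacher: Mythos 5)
This theorem is stated in the paper purely as a citation of \cite{bopt16a}; the paper offers no proof of it, so there is nothing internal to compare your argument against. Your proposal is also not really a proof: by your own admission, steps (1) and (2) are conditional on knowing the precise criterion established in \cite{bopt16a}, and you explicitly defer the substantive content (``the hard part will be step (2)'') to whatever that reference says. As written, this is a plan for locating a proof, not a proof.

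The one piece of genuine mathematical content you add --- the ``propagate renormalizability along $E$'' scheme --- has a gap. You observe (correctly, via Corollary~\ref{c:cnct}) that $\imr_\la$ is open, exhibit a base point in $E$, and then claim one can continue the QL structure over all of $E$ because a collision of $\om_2$ with $\partial V^*$ is a ``non-generic codimension-one event whose closure would have to meet $\ol\Pc_\la$ or the escape locus.'' That last assertion is precisely what needs proving and is not a soft genericity statement: the obstruction to continuing a quadratic-like restriction is degeneration of the annulus modulus $\mathrm{mod}(V^*\sm\overline{U^*})$, and nothing you have said rules out such degeneration occurring at an interior point of $E$. In effect you need $\imr_\la\cap E$ to be closed in $E$ as well as open, and openness plus a heuristic about codimension does not give that. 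The actual argument in \cite{bopt16a} does not run this way; it constructs the QL neighborhood directly from combinatorial data (pairs of external rays landing at a common repelling periodic point cutting off the relevant region), and the dichotomy it proves is that either such a cut exists, producing the QL restriction, or $f$ lies in the closed set corresponding to $\Pc_\la$ (cf.\ Theorem~\ref{t:cubio}). So your route is genuinely different from the cited one and, in the form presented, incomplete.
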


These theorems and Lemma \ref{l:cnct} imply Corollary \ref{c:<1}.

\begin{cor}\label{c:<1}
If $|\la|<1$, then $\imr_\la=\C\sm \Pc_\la$.
\end{cor}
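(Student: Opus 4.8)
The plan is to establish the two inclusions $\imr_\la\subset\C\sm\Pc_\la$ and $\C\sm\Pc_\la\subset\imr_\la$ separately, using the already-available tools. The first inclusion is immediate: by Corollary \ref{c:cnct} the sets $\imr$ and $\Pc$ are disjoint, hence $\imr_\la=\imr\cap\Fc_\la$ and $\Pc_\la=\Pc\cap\Fc_\la$ are disjoint as well, so $\imr_\la\subset\Fc_\la\sm\Pc_\la$. Since we are in the case $|\la|<1$, every polynomial in $\Fc_\la$ lies in $\Fc_{nr}$ and there is no distinction between $\Fc_\la\sm\Pc_\la$ and $\C\sm\Pc_\la$ (identifying $\Fc_\la$ with the $a$- or $b$-parameter plane $\C$). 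This gives $\imr_\la\subset\C\sm\Pc_\la$ with no real work.

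For the reverse inclusion, take $f\in\Fc_\la\sm\Pc_\la=\C\sm\Pc_\la$. By Theorem \ref{t:roesch}, since $|\la|<1$, the set $\Pc_\la$ is a Jordan disk in $\Fc_\la\cong\C$; in particular its complement in $\Fc_\la$ is connected and coincides with the \emph{unbounded} complementary component of $\Pc_\la$ (a Jordan curve has exactly one bounded and one unbounded complementary component, and $\Pc_\la$ being a closed Jordan disk, $\C\sm\Pc_\la$ is precisely that unbounded component). Hence $f$ belongs to the unbounded complementary component of $\Pc_\la$ in $\Fc_\la$, and Theorem \ref{t:when-imr} applies directly (its hypothesis $|\la|\le 1$ covers our $|\la|<1$) to conclude that $f$ is immediately renormalizable, i.e.\ $f\in\imr$, and therefore $f\in\imr\cap\Fc_\la=\imr_\la$. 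Combining the two inclusions yields $\imr_\la=\C\sm\Pc_\la$.

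The only point requiring a modicum of care — and the closest thing to an obstacle — is the identification of $\C\sm\Pc_\la$ with the unbounded complementary component of $\Pc_\la$ in $\Fc_\la$, which is exactly where Theorem \ref{t:roesch} (the Faught--Roesch--Tan--Petersen result that $\Pc_\la$ is a Jordan disk, not merely a full continuum) is indispensable; without the Jordan disk structure one could not rule out that $\C\sm\Pc_\la$ has several components or fails to be ``unbounded complementary component'' in the sense of Theorem \ref{t:when-imr}. One should also note in passing that Lemma \ref{l:cnct} guarantees $K^*$ is automatically connected for any $f\in\imr$, so the filled QL Julia set appearing in the definition of $\imr$ is a genuine connected continuum throughout; this is what makes the equality $\imr_\la=\C\sm\Pc_\la$ a clean dichotomy rather than a statement about a possibly disconnected object.
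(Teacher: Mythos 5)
Your proof is correct and follows the same route the paper intends: Corollary \ref{c:cnct} (hence Lemma \ref{l:cnct}) for the disjointness $\imr_\la\cap\Pc_\la=\emptyset$, Theorem \ref{t:roesch} to identify $\C\sm\Pc_\la$ with the unbounded complementary component, and Theorem \ref{t:when-imr} for the converse inclusion. This matches the paper's stated derivation, which cites exactly these three results.
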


Some polynomials are guaranteed to belong to $\phd_3$.

\begin{lem}[cf. \cite{bostv23}]\label{l:lambda<0}
If $|\la|<1$, then $[z^3+\la z]\in \phd_3$.
\end{lem}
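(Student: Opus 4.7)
The plan is to exploit the oddness of $f(z)=z^3+\la z$ together with the classical Fatou theorem that each immediate attracting basin contains a critical point. Since $f'(0)=\la$ and $|\la|<1$, the fixed point $0$ is (super-)attracting; the two critical points are $\pm c$, where $c=\sqrt{-\la/3}$ (with the degenerate case $c=0$ when $\la=0$, where $0$ becomes a double critical point).

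First I would record the symmetry: $f(-z)=-z^3-\la z=-f(z)$, so $f$ commutes with the involution $\iota(z)=-z$. Consequently $\iota$ preserves the Fatou set of $f$ and permutes its connected components. Let $B_0$ denote the Fatou component containing $0$, i.e., the immediate basin of the attracting fixed point $0$. Since $\iota(0)=0$, the Fatou component $\iota(B_0)$ also contains $0$, so by maximality $\iota(B_0)=B_0$; in other words, $B_0$ is invariant under negation.

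Next I would apply Fatou's theorem to the attracting fixed point $0$: its immediate basin $B_0$ must contain at least one critical point of $f$. Whichever of $\pm c$ that critical point turns out to be, the symmetry $\iota(B_0)=B_0$ established in the previous step forces the other one into $B_0$ as well. Therefore both critical points of $f$ lie in the immediate basin of the same attracting fixed point, which by the equivalent description of $\phd_3$ recalled in Section \ref{s:prelim} means $[z^3+\la z]\in\phd_3$.

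There is no serious obstacle here: the argument is a direct combination of the odd symmetry of the family, the trivial observation $|f'(0)|=|\la|<1$, and Fatou's theorem. The only step that deserves care is phrasing the symmetry statement at the level of Fatou components rather than on the Julia set, which is why I would go through $\iota(B_0)$ being a Fatou component containing $0$ and invoke maximality explicitly.
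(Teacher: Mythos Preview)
Your proposal is correct and follows essentially the same approach as the paper's proof: both exploit the odd symmetry $f(-z)=-f(z)$ to show the immediate basin of $0$ is invariant under $z\mapsto -z$, then use Fatou's theorem to place one critical point in that basin, and conclude by symmetry that both lie there. Your treatment of the symmetry at the level of Fatou components (via the involution permuting components and fixing the one containing $0$) is slightly more explicit than the paper's, which simply asserts central symmetry of the basin from $f^n(-z)=-f^n(z)$.
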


\begin{proof}
Consider $f(z)=\la z+z^3$.
We claim that $J(f)$ is a Jordan curve. Let $U$ be the basin
of immediate attraction of $0$ (which is an attracting fixed point of $f$).
Since $f^n(-z)=-f^n(z)$ for every $n$, then $U$ is centrally
symmetric with respect to $0$. Since there exists a
critical point $c\in U$ and $f'(-z)=f'(z)$ for any $z$, then
$-c$ is critical. The central symmetry of $U$ with respect to $0$ now implies
that $-c\in U$. Since both critical points of $f$
belong to $U$, the claim follows.
\end{proof}

\begin{thm}\label{l:0in}
For any $\la$ with $|\la|<1$, we have $0\in \inn(\Pc_\la)$.
For any $\la$ with $|\la|\le 1$, we have $0\in \Pc_\la$, and $\Pc_\la$ is a continuum.
\end{thm}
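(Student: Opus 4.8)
The plan is to prove the two assertions separately, using the results that have already been assembled. For the first assertion, fix $\la$ with $|\la|<1$. By Lemma \ref{l:lambda<0} we have $[z^3+\la z]\in\phd_3$, i.e. the polynomial $f_{\la,0}$ (the member of $\Fc_\la$ with $b=0$, equivalently $a=b^2=0$) has both critical points in the immediate basin of the attracting fixed point $0$. Since $|\la|<1$, $0$ is attracting, and $\phd_3$ is open (a hyperbolicity condition that is stable under perturbation), so a whole neighborhood of $f_{\la,0}$ in $\Fc$ consists of classes in $\phd_3$ with $|g'(0)|<1$. By the very definition of $\Pc$, this forces $f_{\la,0}\in\Pc$, hence $f_{\la,0}\in\Pc_\la$; but more is true: every $g\in\Fc$ near $f_{\la,0}$ already has $[g]\in\phd_3$ and $|g'(0)|<1$, so in particular every point of $\Fc_\la$ near $f_{\la,0}$ lies in $\Pc_\la$ (one may perturb within $\Fc_\la$ and then, if needed, slightly off $\la$ to witness the defining condition, but staying in $\phd_3$). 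Thus $f_{\la,0}=0$ (in the $a$-coordinate) is an interior point of $\Pc_\la$. The only mild point to check is that the defining ``$|g'(0)|<1$'' clause in the definition of $\Pc$ is automatically met here, which it is because $|\la|<1$ already.

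For the second assertion, fix $\la$ with $|\la|\le 1$. To see $0\in\Pc_\la$: if $|\la|<1$ this follows from the first part. If $|\la|=1$, approximate $\la$ by $\la_n$ with $|\la_n|<1$; then $f_{\la_n,0}\in\phd_3$ with $|\la_n|<1$ by Lemma \ref{l:lambda<0}, and these polynomials converge to $f_{\la,0}\in\Fc_{nr}$, so $f_{\la,0}\in\Pc$ by definition of $\Pc$ (it is, tautologically, a limit of classes in $\phd_3$ with derivative of modulus $<1$). Hence $0=f_{\la,0}\in\Pc_\la$, which in particular shows $\Pc_\la\ne\emptyset$. It remains to prove $\Pc_\la$ is a continuum. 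For $|\la|<1$ this is immediate from Theorem \ref{t:roesch}, which says $\Pc_\la$ is a Jordan disk. For $|\la|=1$ one argues by a Hausdorff-limit / upper-semicontinuity argument: $\Pc_\la$ should be the limit, in the Hausdorff metric on compact subsets of a fixed large disk in $\Fc_\la$, of the Jordan disks $\Pc_{\la_n}$ as $\la_n\to\la$, $|\la_n|<1$. Each $\Pc_{\la_n}$ is connected (indeed a disk), the holomorphic-motion machinery of \cite{tp06} used in Theorem \ref{t:roesch} gives uniform control so that these sets do not degenerate or escape to infinity, and a Hausdorff limit of a convergent sequence of continua is a continuum. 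Combined with $0\in\Pc_\la$ for every such $\la$, which pins the limit down and prevents it from being empty, this yields that $\Pc_\la$ is a (nonempty, compact, connected) continuum.

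The main obstacle is the last step: passing the connectedness of $\Pc_\la$ from $|\la|<1$ to the boundary case $|\la|=1$. One must verify that $\Pc_\la$ (defined intrinsically as $\Pc\cap\Fc_\la$, where $\Pc$ is itself defined by an approximation property inside $\Fc_{nr}$) genuinely coincides with the Hausdorff limit of the $\Pc_{\la_n}$, rather than merely containing it or being contained in it. Upper semicontinuity of $\la\mapsto\Pc_\la$ (any limit of points $g_n\in\Pc_{\la_n}$ lies in $\Pc_\la$) follows directly from the definition of $\Pc$ as a closure-type condition; the reverse inclusion — that every point of $\Pc_\la$ is a limit of points of nearby slices — is the delicate part and is where one needs the holomorphic motion between parameter slices (cf. \cite{tp06,bopt16a}) to transport the $\Pc_{\la_n}$'s consistently. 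A secondary, more routine point is to confirm that the slices $\Fc_\la$ are identified compatibly (via the coordinate $a=b^2$, as in Fig. \ref{fig:slices}) so that ``Hausdorff convergence'' makes sense uniformly in $\la$; this is bookkeeping rather than a real difficulty.
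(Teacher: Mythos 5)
Your handling of the first two claims is correct and matches the paper's intent: $\phd_3$ and $\{|g'(0)|<1\}$ are both open, so Lemma~\ref{l:lambda<0} gives a full neighborhood of $z^3+\la z$ in $\Fc_\la$ inside $\Pc_\la$ when $|\la|<1$, and approximating $\la$ on the unit circle by $\la_n$ in the open disk shows $0\in\Pc_\la$ for $|\la|=1$ as well.

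The gap is in the connectedness argument for $|\la|=1$. You try to show that $\Pc_\la$ is the \emph{Hausdorff limit} of the Jordan disks $\Pc_{\la_n}$ and then invoke ``a Hausdorff limit of a convergent sequence of continua is a continuum.'' That is both more than what is needed and not justified: you would have to prove that the sequence $\Pc_{\la_n}$ actually converges in the Hausdorff metric (i.e.\ $\liminf=\limsup$), and the appeal to the holomorphic-motion machinery of \cite{tp06} is left as a hand-wave. Moreover you misidentify the ``delicate part.'' The inclusion $\Pc_\la\subset\limsup_{\tau\to\la}\Pc_\tau$ is in fact immediate from the very definition of $\Pc$: if $f\in\Pc_\la$ then there are $g_n\to f$ with $|g_n'(0)|<1$ and $[g_n]\in\phd_3$; since $\phd_3$ and $\{|g'(0)|<1\}$ are open, each $g_n$ tautologically lies in $\Pc$ and hence in $\Pc_{\tau_n}$ with $\tau_n=g_n'(0)\to\la$. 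No holomorphic motion is needed. The paper's route is then: observe $\Pc_\la=\limsup_{\tau\to\la,\,|\tau|<1}\Pc_\tau$, note that each $\Pc_\tau$ is a continuum containing the common point $0$ (and all lie in a bounded region), and invoke the classical Kuratowski-type fact that the upper limit of a family of continua sharing a common point is itself a continuum. This sidesteps any question of Hausdorff convergence, which is precisely what your proposal fails to settle.
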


\begin{proof}
The first claim is proven in Lemma \ref{l:lambda<0}.
To prove the rest, observe that if $|\la|=1$, then $\Pc_\la=\limsup
\Pc_\tau$ where $\tau\to \la, |\tau|<1$. Since $0\in
\inn(\Pc_\tau)$ for all such numbers $\tau$, we see that $0\in
\Pc_\la$, and $\Pc_\la$, being the lim sup
of the continua $\Pc_\tau$, which all share a common point $0$, is also
a continuum as claimed.
\end{proof}

\subsection{The structure of the slice $\Fc_\la$}
\label{ss:stru-fla}

Following \cite{bot16}, define the set $\cuc_\la, |\la|\le 1$ as the
set  of all polynomials $f\in \Fc_\la$ with connected Julia sets and
such that the following holds:

\begin{enumerate}

\item $f$ has no repelling periodic cutpoints in $J(f)$;

\item $f$ has at most one non-repelling cycle different from $0$, and,
 if such a cycle exists, its multiplier is $1$.

\end{enumerate}

The set $\cuc_\la$ is a centerpiece, literally and figuratively, of
the $\la$-slice $\Cc_\la$ of the cubic connectedness locus.
Conjecturally, $\cuc_\la=\Pc_\la$; in particular, there is no way of distinguishing these sets in Fig. \ref{fig:slices}.
A big role in studying polynomials from $\Cc_\la$ is played by studying properties
of the quadratic polynomial $z^2+\la z$ whose fixed point $0$
has multiplier $\la$.
We will assume that $|\la|=1$ but $\la$ is not a root of unity
 (no extra arithmetic conditions are imposed on $\la$).

For a closed subset $A\subset \uc$ of at least 3 points,
 call its convex hull $\ch(A)$ (taken with respect to the standard real affine structure of the plane) a \emph{gap}.
Given a chord $\ell=\ol{ab}$ of the unit circle with
endpoints $a$ and $b$, set
$\si_3(\ell)=\ol{\si_3(a)\si_3(b)}$ (we abuse the notation and identify the angle-tripling
map $\si_3:\R/\Z\to \R/\Z$ with the map $z^3:\uc\to \uc$; we treat the map $\si_2$ similarly). For a closed set $A\subset \uc$, call
each complementary arc of $A$ a \emph{hole} of $A$. Given a compactum $A\subset \C$ let the
\emph{topological hull} $\thu(A)$ be the complement to the unbounded complementary domain of $A$.

\subsection{Invariant quadratic gaps}\label{sss:family}
Let us discuss properties of \emph{quadratic} $\si_3$-invariant gaps \cite{BOPT}.
For our purposes it suffices to consider gaps $G$ such that $G\cap \uc$ has no isolated points.
\emph{Edges} of $G$ are chords on the boundary of $G$, and \emph{holes} of $G$
 are components of $\uc\sm G$.
The gap $G$ being ($\si_3$)-\emph{invariant}
 means that an edge of a gap $G$ maps to an edge of $G$, or to a point in $G\cap \uc$;
 \emph{quadratic} means that after collapsing holes of $G$
the map $\si_3|_{\bd(G)}$ induces a locally strictly monotone two-to-one map of the unit
circle to itself that preserves orientation and has no critical points.

For convenience, normalize the length of the circle so that it equals
$1$. Let $\Vf$ be a quadratic $\si_3$-invariant gap with no isolated
points. Then there is a unique open arc $I_{\Vf}$ of $\uc$ (called the \emph{major hole} of $\Vf$)
 complementary to $\Vf\cap \uc$ whose length is greater
than or equal to $1/3$; the length of this arc is at most $1/2$.
The edge $M_{\Vf}$ of $\Vf$ connecting the endpoints of $I_{\Vf}$ is called the \emph{major} of $\Vf$.
If $M_{\Vf}$ is \emph{critical} (that is, the endpoints of $M_{\Vf}$ have the same $\si_3$-image),
 then $M_{\Vf}$ and $\Vf$ are said to be of \emph{regular critical type};
 if $M_{\Vf}$ is periodic under $\si_3$, then $M_{\Vf}$ and $\Vf$ are said to be of \emph{periodic type}.
It follows from \cite[Lemma 3.10]{BOPT} that $M_{\Vf}$ is of one of these two types.
Collapsing edges of $\Vf$ to points, we construct
a monotone map $\tau:\Vf\to \uc$ that semiconjugates
$\si_3|_{\bd(\Vf)}$ and $\si_2:\uc\to \uc$.

The map $\tau$ is uniquely defined by the fact that it is monotone and
semiconjugates $\si_3|_{\bd(\Vf)}$ and $\si_2:\uc\to \uc$. Indeed, if
there had been another map $\tau'$ like that, then there would have
existed a non-trivial orientation preserving homeomorphism of the
circle to itself conjugating $\si_2$ with itself.
However, it is easy to see that the only such map is the identity
(recall that $\si_2$ is an expanding covering that has a unique fixed point).

\begin{thm}[\cite{bot16, bot22b}]\label{t:cubio}
If $|\la|\le 1$, then $\thu(\Pc_\la)=\cuc_\la$.
\end{thm}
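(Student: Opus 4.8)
The plan is to prove the two inclusions $\Pc_\la\subseteq\cuc_\la$ and $\cuc_\la\subseteq\thu(\Pc_\la)$, together with the structural fact that $\cuc_\la$ is a full continuum (closed, connected, with connected complement). Since the topological hull operator is monotone and idempotent, these facts give the chain $\thu(\Pc_\la)\subseteq\thu(\cuc_\la)=\cuc_\la\subseteq\thu(\Pc_\la)$, whence all are equal and in particular $\cuc_\la=\thu(\Pc_\la)$. I would establish that $\cuc_\la$ is a full continuum first, using $0\in\Pc_\la\subseteq\cuc_\la$ (Theorem~\ref{l:0in}), the stability of repelling periodic ray portraits under perturbation (so that the defining conditions of $\cuc_\la$ pass to limits, making $\cuc_\la$ closed), and a separation argument ruling out a bounded complementary component of $\cuc_\la$ containing a parameter with a repelling periodic cutpoint or an extra non-central non-repelling cycle.

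For $\Pc_\la\subseteq\cuc_\la$, take $f\in\Pc_\la$ and a sequence $g_n\to f$ with $|g_n'(0)|<1$ and $[g_n]\in\phd_3$. Each $g_n$ is hyperbolic with $J(g_n)$ a Jordan curve and with $\{0\}$ as its unique non-repelling cycle, which is attracting. Connectedness of $J(f)$ is immediate since $\Mc_3$ is closed. For condition (1) of $\cuc_\la$: a repelling periodic cutpoint of $J(f)$, together with the finite portrait of $K(f)$-rays landing at it, would persist for all large $n$, contradicting that $J(g_n)$ is a Jordan curve. For condition (2): by Fatou--Shishikura $f$ has at most two non-repelling cycles, one being $\{0\}$; a second such cycle $C$, if present, is a limit of repelling cycles of the $g_n$, so its multiplier lies on the unit circle, and a Siegel or Cremer $C$ is excluded by a continuity argument for the Fatou components (it would force both critical points of $f$ into rotation/hedgehog domains, incompatible with $f$ being a limit of maps whose bounded Fatou set is a single basin carrying both critical points), leaving $C$ parabolic of multiplier $1$. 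Hence $f\in\cuc_\la$, so $\thu(\Pc_\la)\subseteq\thu(\cuc_\la)=\cuc_\la$.

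For $\cuc_\la\subseteq\thu(\Pc_\la)$, it suffices that the unbounded complementary component $\mathcal U$ of $\Pc_\la$ in $\Fc_\la$ be disjoint from $\cuc_\la$. A map $f\in\mathcal U$ with disconnected Julia set is not in $\cuc_\la$ by definition, so assume $K(f)$ is connected; by Theorem~\ref{t:when-imr}, $f$ is immediately renormalizable. Since $f|_{U^*}$ has degree $2$, exactly one critical point lies in $K^*\ni 0$, while the other critical point $\om_2$ lies outside $U^*\supseteq K^*$ and hence, as $K(f)$ is connected, in some decoration $D$ of $K(f)$ relative to $K^*$. Following the forward orbit of $D$ and using Lemma~\ref{l:triv} (each decoration meets $K^*$ in a continuum and so is attached to $K^*$ at a (pre)periodic ``root''), one shows that the orbit of $\om_2$ either is eventually trapped in an attracting or parabolic basin of a cycle different from $\{0\}$, or accumulates on the periodic repelling point at which its decoration is attached. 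In the former case a cycle with multiplier $\ne 1$ violates condition (2), while a parabolic cycle of multiplier $1$ carries repelling periodic cutpoints on its basin boundary, violating condition (1); in the latter case one obtains a repelling periodic cutpoint of $J(f)$ directly. Either way $f\notin\cuc_\la$, so $\mathcal U\cap\cuc_\la=\emptyset$ and $\cuc_\la\subseteq\thu(\Pc_\la)$.

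The step I expect to be the main obstacle is $\cuc_\la\subseteq\thu(\Pc_\la)$: showing that an immediately renormalizable cubic in $\Fc_\la$ with connected Julia set never lies in $\cuc_\la$. Tracking the orbit of the free critical point $\om_2$ through the decorations of $K(f)$ relative to $K^*$, and in particular closing the ``parabolic of multiplier $1$'' loophole in conditions (1)--(2), is where the combinatorial machinery is needed: the family of quadratic $\si_3$-invariant gaps $\Vf$ and the monotone semiconjugacy $\tau\colon\Vf\to\uc$ between the cubic combinatorics carried by $\Vf$ and the quadratic polynomial $z^2+\la z$, as developed in \cite{bot16,bot22b}.
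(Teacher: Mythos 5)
The paper does not prove Theorem~\ref{t:cubio}; it is quoted from \cite{bot16,bot22b} without proof, so there is no in-paper argument to compare your attempt against. Your high-level decomposition (show $\Pc_\la\subseteq\cuc_\la$, show $\cuc_\la\subseteq\thu(\Pc_\la)$, show $\cuc_\la$ is full, then sandwich via the monotone idempotent operator $\thu$) is a reasonable skeleton, but several load-bearing steps are asserted rather than proved, and at least two of them are the actual substance of the cited theorem.

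The most serious gap is in $\cuc_\la\subseteq\thu(\Pc_\la)$. You reduce to showing that an immediately renormalizable $f\in\Fc_\la$ with connected $J(f)$ fails either condition (1) or condition (2) defining $\cuc_\la$. Your sketch invokes Lemma~\ref{l:triv} to say that ``each decoration meets $K^*$ in a continuum and so is attached to $K^*$ at a (pre)periodic root.'' Lemma~\ref{l:triv} gives only that $\ol D\cap K^*$ is a continuum; it gives no point of attachment, let alone a (pre)periodic one, and in fact the entire machinery of prime ends, quadratic arguments, and the gap $\Uf$ in Sections 4--5 of the paper exists precisely because decorations need not hang off a single well-defined landing point. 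The subsequent dichotomy for the orbit of $\om_2$ (trapped in a basin versus accumulating on the attachment point) is also unsupported, and even granting it, ``one obtains a repelling periodic cutpoint directly'' is not automatic: you would need to show that the root is a cutpoint of $J(f)$, which is exactly the sort of statement (two rays $R(\al_\Uf)$, $R(\be_\Uf)$ landing together) that the paper labours over in Lemmas~\ref{l:al-land} and \ref{t:sameland} under extra hypotheses. In short, this step is where the content of \cite{bot16,bot22b} lives, and your outline does not supply it.

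Two further gaps. First, in $\Pc_\la\subseteq\cuc_\la$ you dispose of a hypothetical second non-repelling cycle $C$ by treating only the Siegel/Cremer cases via a vague continuity-of-Fatou-components argument; but condition (2) also forbids a parabolic cycle of multiplier $e^{2\pi i p/q}$ with $q>1$, and you do not address that case at all (note that such a point is a cutpoint of $J(f)$ yet is not repelling, so condition (1) does not exclude it either). Second, the claim that $\cuc_\la$ is closed because ``the defining conditions pass to limits'' is not correct as stated: the absence of repelling periodic cutpoints and condition (2) are not manifestly closed conditions (rays can start landing together in the limit, multipliers can move onto the unit circle), and establishing closedness of $\cuc_\la$ is itself part of what \cite{bot16} proves.
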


\section{Decorations and their quadratic arguments}
The following are standard notions of the Carath\'eodori theory,
 applicable to any full continuum $K^*\subset\C$.
A \emph{crosscut} (of $K^*$) is a closed arc $I$ with endpoints $x, y\in K^*$ such that $I\sm \{a, b\}\subset \C\sm K^*$.
If $a_n$ is a crosscut, then the \emph{shadow} $\cont(a_n)$ of the
crosscut $a_n$ is the bounded complementary component of $a_n\cup K^*$.
A sequence of crosscuts $a_n, n=1, 2, \dots$ is \emph{fundamental} if
$a_{n+1}\subset \cont(a_n)$ for every $n$, and the diameter of $a_n$
converges to $0$ as $n\to\infty$. Two fundamental sequences of crosscuts
are \emph{equivalent} if crosscuts of one sequence are eventually
contained in the shadows of crosscuts of the other one, and vice versa.
This is an equivalence relation on the set of fundamental sequences of crosscuts whose classes are called \emph{prime ends}
 of $\C\sm K^*$.
In what follows the set of endpoints of a closed arc $I$ is denoted by $\en(I)$.

\subsection{Quadratic arguments}
\label{ss:quadarg}
Let $f$ be an immediately renormalizable cubic polynomial with filled QL Julia set $K^*$.
 Fix a choice of a QL neighborhood of $K^*$, which defines $K^*$-rays.
Consider $K^*$-rays $R^*(\al)$. Clearly, $f(R^*(\al))\supset R^*(2\al)$
(the curve $f(R^*(\al))$ extends the ray $R^*(2\al)$ into the annulus
between the QL neighborhood of $K^*$ and its image).

By the Carath\'eodory theory, every $K^*$-ray $R^*(\al)$
corresponds to a certain prime end $E^*(\al)$ of $\C\sm K^*$ represented by a fundamental sequence
of crosscuts $\{a_n\}$. For every $a_n$, a \emph{tail} of $R^*(\al)$ is contained in $\cont(a_n)$ (a
\emph{tail} of $R^*(\al)$ is defined by a point $x\in R^*(\al)$ and
is the component of $R^*(\al)\sm \{x\}$ that accumulates
in $K^*$). The (useful) associated picture in
$\C\sm \cdisk$ is obtained by transferring the picture from the $K^*$-plane to
$\C\sm \cdisk$ by means of the map $\psi^*$ introduced right after Definition \ref{d:plrays}.

Namely, for every sufficiently large $n$ (so that $a_n\subset U^*$), the set $\psi^*(a_n\sm \en(a_n))$ is an arc
$I_n\subset \C\sm \cdisk$ without endpoints such that $\ol{I_n}$ is a closed arc
with endpoints $x_n, y_n\in \uc$. One can choose
the circle arc $I'_n$ positively oriented from $x_n$ to $y_n$ such that
$z_\al\in I'_n$ where $z_\al\in \uc$ is the point of the circle with argument
$\al$. Consider the Jordan curve $Q_n=I_n\cup
I'_n$; then the radial ray $R_\al$ with  initial point
at $z_\al\in \uc$, intersected with the simply connected domain $U_{a_n}$ with boundary
$Q_n$, contains a small subsegment of $R_\al$ with  endpoint $z_\al$.
Observe that $(\psi^*)^{-1}(U_{a_n})$ is the shadow of the crosscut $a_n$.
The \emph{impression} of $E^*(\al)$ is the intersection of the closures of $\cont(a_n)$. We
say that a prime end $E^*(\al)$
 is \emph{disjoint} from a set $S\subset\C\sm
K^*$ if $\cont(a_n)\cap S=\0$ for all sufficiently large $n$.
In what follows, when talking about crosscuts, we will use this notation.

Recall that $\psi^*$ is defined only on $U^*\sm K^*$.
If $X\subset\C\sm K^*$ is any subset, then $\psi^*(X)$ means $\psi^*(X\cap U^*)$.

\begin{lem}\label{l:cross-primends}
Suppose that $X\subset \C\sm K^*$ is a connected set, and
$\psi^*(X)$ accumulates on exactly one point $z_\al\in \uc$ with
argument $\al$. Then $X$ is non-disjoint from $E^*(\al)$ and
disjoint from any other prime end.
\end{lem}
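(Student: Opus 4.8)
The plan is to transfer the whole question to the model annulus via the homeomorphism $\psi^*\colon U^*\sm K^*\to\mathcal C$, where $\mathcal C=\psi^*(U^*\sm K^*)$ is a collar of $\uc$ inside $\C\sm\cdisk$. Put $S=\psi^*(X)$ (which by convention means $\psi^*(X\cap U^*)$); the hypothesis is that the closure $\ol S$ in $\C$ satisfies $\ol S\cap\uc=\{z_\al\}$. Since $X\cap K^*=\emptyset$, for every crosscut $a$ of $K^*$ contained in $U^*$ we have $\cont(a)=(\psi^*)^{-1}(U_a)\subset U^*\sm K^*$, hence
\[
\cont(a)\cap X=(\psi^*)^{-1}\bigl(U_a\cap S\bigr),
\]
so $\cont(a)\cap X=\emptyset$ if and only if $U_a\cap S=\emptyset$. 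Thus the lemma becomes a statement about the planar set $S$ and the shadow-regions $U_a$.

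The second step is to build, for any prescribed argument $\gamma$, a fundamental sequence representing $E^*(\gamma)$ whose shadows are transparent in the $\C\sm\cdisk$ picture. Choose periodic angles $\gamma_n^-<\gamma<\gamma_n^+$ with $\gamma_n^\pm\to\gamma$; the external rays of the quadratic representative of $f^*$ at periodic angles land, so, transporting by the hybrid conjugacy and the B\"ottcher map, each $K^*$-ray $R^*(\gamma_n^\pm)$ lands at a periodic point of $K^*$, and we may pick the $\gamma_n^\pm$ so that these two landing points are distinct. Choose $\delta_n\downarrow0$ so small that $U_n:=\{\,z:1<|z|<1+\delta_n,\ \arg z\in(\gamma_n^-,\gamma_n^+)\,\}\subset\mathcal C$, let $I'_n$ be the arc $\ol{U_n}\cap\uc$ and let $I_n$ be the union of the remaining three sides of $\partial U_n$. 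Then $(\psi^*)^{-1}$ takes $I_n\sm\en(I_n)$ to an arc that closes up, along the two radial sides which run into the landing rays $R^*(\gamma_n^\pm)$, to a crosscut $a_n$ of $K^*$ with $\psi^*(\cont(a_n))=U_{a_n}=U_n$; the $U_n$ are nested, each contains a tail of the radial ray $R_\gamma$, and $U_n$ shrinks to $z_\gamma$. Hence $\{a_n\}$ is a fundamental sequence of exactly the sort used in Subsection \ref{ss:quadarg} to assign the quadratic argument $\gamma$, and it represents $E^*(\gamma)$.

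Now the conclusion is immediate. Take $\gamma=\be\neq\al$: since $z_\be\notin\ol S$ and $\ol S$ is closed, $S\cap\{\,|z-z_\be|<\rho\,\}=\emptyset$ for some $\rho>0$; as $U_n$ shrinks to $z_\be$ we get $U_n\subset\{\,|z-z_\be|<\rho\,\}$, hence $U_{a_n}\cap S=\emptyset$ and $\cont(a_n)\cap X=\emptyset$ for all large $n$, i.e.\ $X$ is disjoint from $E^*(\be)$. Take $\gamma=\al$: the nested regions $U_n$ form a neighbourhood basis of $z_\al$ in $(\C\sm\cdisk)\cup\{z_\al\}$ (each $U_n$ contains $\{\,|z-z_\al|<r_n\,\}\cap(\C\sm\cdisk)$ for a suitable $r_n>0$, and $U_n\to z_\al$); since $z_\al\in\ol S$, every $U_n$ meets $S$, so $U_{a_n}\cap S\neq\emptyset$ and $\cont(a_n)\cap X\neq\emptyset$ for every $n$, i.e.\ $X$ is non-disjoint from $E^*(\al)$.

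The one point that needs genuine care is the claim, at the end of the second paragraph, that the hand-made sequence $\{a_n\}$ really represents the prime end $E^*(\gamma)$ and not merely some other prime end whose shadows happen to contain tails of $R^*(\gamma)$; this is precisely why the regions $U_n$ are made to shrink to the single model point $z_\gamma$, matching the way $E^*(\gamma)$ was pinned down in Subsection \ref{ss:quadarg}, and why the endpoints of the $a_n$ are placed at landing points of rays (so that $a_n$ is a bona fide crosscut). Everything else is routine transcription through $\psi^*$, using only the density of periodic angles in $\uc$ and the landing of periodic external rays of the quadratic representative.
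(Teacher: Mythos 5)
Your argument takes essentially the same route as the paper: transfer everything to the model annulus via $\psi^*$, identify shadows $\cont(a)$ with the regions $U_a$, and then observe that a model shadow of a crosscut for $E^*(\gamma)$ always contains a relative neighbourhood of $z_\gamma$ (so it must meet $S=\psi^*(X)$ when $\gamma=\al$) while for $\gamma=\be\neq\al$ one can take such shadows inside a small ball around $z_\be$ missing $\ol S$; the paper simply invokes the existence of such crosscuts, whereas you build them by hand from landing periodic $K^*$-rays. One small imprecision: you call your rectangle-boundary pullbacks a ``fundamental sequence,'' but you never verify that $\dia(a_n)\to0$ in $\C$ (which the paper's definition requires); this is harmless, since the model shadows of your $a_n$ shrink to $\{z_\gamma\}$ and hence interleave with the shadows of any genuine fundamental sequence for $E^*(\gamma)$, which is all the disjoint/non-disjoint dichotomy needs.
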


\begin{proof}
Let $a$ be a crosscut associated with $E^*(\al)$. Consider the set $U_a$. Since
$\psi^*(X)$ accumulates on $z_\al$, then $\psi^*(X)$ is non-disjoint
from $U$, and hence $X$ is non-disjoint from $\cont(a)$. By definition, $X$ is
non-disjoint from  $E^*(\al)$.
Also, for any point $t=e^{2\pi \beta i}\ne z_\al$, we can find a crosscut
$b$ associated with $\beta$ and so small that $\ol{U_b}$ is
disjoint from $\psi^*(X)$. Then $X$ is disjoint from
$\cont(b)$ and hence $X$ is disjoint from $E^*(\beta)$.
\end{proof}

Recall that a set $A$ \emph{accumulates in} $B$ if $\ol{A}\sm A\subset B$.

\begin{prop} \label{p:pends}
Every decoration $D$ is disjoint from all prime ends of $K^*$ except
exactly one.
\end{prop}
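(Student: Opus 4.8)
The plan is to combine Lemma~\ref{l:triv3} (via the Ray Assumption) with Corollary~\ref{l:triv2} and Lemma~\ref{l:cross-primends}. First I would recall that by Lemma~\ref{l:raya1} the pair $K^*\subset K$ satisfies the Ray Assumption, where $K=K(f)$. Hence, by Lemma~\ref{l:triv3}, every decoration $D$ of $K$ rel.\ $K^*$ has degenerate arc $I_D$; that is, in the $\C\sm\cdisk$ picture obtained from the inverse Riemann map $\psi$ of $K^*$, the set $\ol{\psi(D)}\sm\psi(D)$ is a single point of $\uc$. The subtlety is that Lemma~\ref{l:cross-primends} is phrased in terms of the map $\psi^*$ coming from the hybrid conjugacy, not the Riemann map $\psi$ of $K^*$; but these two uniformizations of $\C\sm K^*$ differ only by composition with a conformal automorphism of $\C\sm\cdisk$ fixing $\infty$ (a rotation), so "accumulates on exactly one point of $\uc$" is a property independent of which one we use. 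So I would first note that $\psi^*(D\cap U^*)$ accumulates on exactly one point $z_\al\in\uc$.

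The second step is to observe that $D$ is connected (decorations are connected by Definition~\ref{d:dec}), and that $D\subset\C\sm K^*$ since decorations of $K$ rel.\ $K^*$ are by construction disjoint from $K^*$. Therefore the hypotheses of Lemma~\ref{l:cross-primends} are met with $X=D$: $D$ is a connected subset of $\C\sm K^*$ whose image $\psi^*(D)$ accumulates on exactly one point $z_\al$. The lemma then yields immediately that $D$ is non-disjoint from the prime end $E^*(\al)$ and disjoint from every other prime end of $K^*$. This is exactly the assertion of the proposition.

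The one point that needs a little care — and which I expect to be the main (though modest) obstacle — is verifying that $\psi^*(D)=\psi^*(D\cap U^*)$ really does accumulate on a \emph{single} point of $\uc$ rather than possibly on none or on an arc. Accumulation on a nonempty set follows from Lemma~\ref{l:triv}: $D$ accumulates in $K^*$, so $D\cap U^*$ is nonempty and its $\psi^*$-image is a nonempty connected set whose closure meets $\uc$; that it meets $\uc$ in a continuum is Corollary~\ref{l:triv2} (transported along the homeomorphism between the $\psi$- and $\psi^*$-pictures, which is legitimate by the remark following the Ray Assumption). That this continuum is a single point is precisely the degeneracy of $I_D$ from Lemma~\ref{l:triv3}. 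Thus the chain Lemma~\ref{l:raya1} $\Rightarrow$ Lemma~\ref{l:triv3} $\Rightarrow$ (single accumulation point) $\Rightarrow$ Lemma~\ref{l:cross-primends} completes the argument, and I would write the proof in just those few lines.
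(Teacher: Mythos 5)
Your approach is essentially the paper's own.  The paper's proof runs the argument of Lemma~\ref{l:triv3} directly in the $\psi^*$-picture (a dense set of points of $\uc$ is hit by $\psi^*$-images of $K$-rays landing in $K^*$, and the connected set $\psi^*(D)$ is disjoint from them, hence accumulates at a single point) rather than citing the lemma, and then concludes via the same observation as Lemma~\ref{l:cross-primends}.  Packaging it as Lemma~\ref{l:raya1} $\Rightarrow$ Lemma~\ref{l:triv3} $\Rightarrow$ Lemma~\ref{l:cross-primends} is perfectly sound.

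The one factual error is your justification for moving between $\psi$ and $\psi^*$.  They do \emph{not} ``differ only by composition with a conformal automorphism of $\C\sm\cdisk$ fixing $\infty$ (a rotation)'': $\psi$ is the conformal inverse Riemann map, while $\psi^*$ is the composition of the hybrid conjugacy $\vp$ — which is quasi-conformal but \emph{not} conformal off $K^*$ — with a B\"ottcher coordinate, and it is defined only on $U^*\sm K^*$.  So $\psi^*\circ\psi^{-1}$ is a quasi-conformal embedding of an annular neighborhood of $\uc$ into $\C\sm\cdisk$, not a rotation.  The conclusion you need (that ``$\psi^*(D)$ accumulates on exactly one point of $\uc$'' is equivalent to the degeneracy of $I_D$) is nevertheless correct, for a different reason: either because such a quasi-conformal map between annuli adjacent to $\uc$ extends to a homeomorphism of $\uc$, or — more in the spirit of this paper — because the Ray Assumption and the degeneracy of $I_D$ are topological properties of a neighborhood of $K^*$ and are thus transferable by any homeomorphism, exactly as stated in the remark following the Ray Assumption and already used in the proof of Lemma~\ref{l:raya1}.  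With that correction your proof is complete.
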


\begin{proof}
Consider the connected set $\psi^*(D)\subset \C\sm \cdisk$. It is disjoint from $\psi^*$-images of $K$-rays
that land on (pre)periodic points of $K^*$. Hence there is a dense in $\uc$ set $A\subset \uc$ such
that, for each $x\in A$, there exists the $\psi^*$-image $\Gamma$ of a $K$-ray that lands on $x$.
Clearly, $\Gamma$ lies in a curve that extends all the way to infinity while being disjoint from $\uc$
 and from the $\psi^*$-images of all decorations.
It now follows easily that
$\psi^*(D)$ accumulates on exactly one point of $\uc$ and the lemma follows.
\end{proof}

We are ready to define \emph{quadratic arguments} of decorations.

\begin{dfn}\label{d:qarg}
Let $D$ be a decoration and $E^*(\al)$ is the only prime end non-disjoint from $D$
(the above is justified by Proposition \ref{p:pends}).
Then $\al$ is called the \emph{quadratic argument} of $D$ and is
denoted by $\arg_2(D)$.
\end{dfn}

By Proposition \ref{p:pends}, quadratic arguments of decorations are well
defined while different decorations may a priori have the same
quadratic arguments.
Using the map $\psi^*:U^*\sm K^*\to \C\sm\cdisk$ we can transfer
all decorations to the set $\C\sm \cdisk$; then, for any decoration $D$,
the set $\psi^*(D)$ accumulates on the point $e^{2\pi i \arg_2(D)}$ of the
unit circle with argument $\arg_2(D)$ (that is, $\psi^*(D)$
accumulates to only one point of the unit circle with argument $\arg_2(D)$).

\subsection{Dynamics on decorations}
\label{s:deco}
We consider $f\in \imr$ (then, by Lemma \ref{l:7.2}, the PL set $K^*$ is unique)
such that the critical point $\om_2$ of $f$ that does not belong to $K^*$ is not recurrent.
Also, a \emph{pullback} under a continuous map $g:\C\to\C$
of a connected set $A\subset\C$ is a component of $g^{-1}(A)$,
and an \emph{$n$-th pullback} (or just \emph{iterated pullback}) of $A$ is a pullback of
$A$ under $g^n$.

\begin{lem}
\label{l:pull}
There exists a pullback $\wt K^*$ of $K^*$ disjoint from
 $K^*$ and such that $f:\wt K^*\to K^*$ is a homeomorphism.
\end{lem}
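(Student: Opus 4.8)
The goal is to produce a pullback $\wt K^*$ of $K^*$ that is disjoint from $K^*$ and on which $f$ acts homeomorphically. The key structural facts available to us are: $K^*$ is the QL filled Julia set of $f^*=f|_{U^*}$, exactly one critical point $\om_1$ of $f$ lies in $K^*$, and by hypothesis the other critical point $\om_2$ does \emph{not} lie in $K^*$ and is not recurrent. First I would examine the full preimage $f^{-1}(K^*)$. Since $f$ has degree $3$ and $f|_{U^*}:U^*\to V^*$ is a degree-$2$ branched cover onto a neighborhood $V^*\supset K^*$, the map $f$ restricted to $f^{-1}(K^*)$ has total degree $3$, of which degree $2$ is accounted for by the ``central'' branch producing $K^*$ itself. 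The remaining degree-$1$ branch produces a component $\wt K^*$ of $f^{-1}(K^*)$ on which $f$ is one-to-one. The point is to show that this component is genuinely disjoint from $K^*$, i.e. that $K^*$ is not ``self-attached'' through $\wt K^*$.

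Here is how I would argue disjointness. The component $\wt K^*$ is a full continuum mapped homeomorphically onto $K^*$, so $\wt K^*$ contains a point mapping to the non-repelling fixed point $a\in K^*$; call it $a'$, and note $a'$ is a preimage of $a$ distinct from $a$ (since $f$ is one-to-one near $\wt K^*$ and $a$ is fixed, if $a'=a$ then $\wt K^*$ would be forced to contain the critical value situation incompatible with the central branch). If $\wt K^*\cap K^*\ne\emptyset$, then since both are connected full continua, $\wt K^*\cup K^*$ is a continuum on which $f$ has degree $3$ with the critical point $\om_1\in K^*$ and no critical point in $\wt K^*$; I would derive a contradiction with Corollary \ref{c:7.2} (at most one QL-invariant $K^*$, and it contains $a$) together with the uniqueness in Lemma \ref{l:7.2}, or more directly with Lemma \ref{l:gray-1}, which forbids periodic repelling/parabolic cutpoints of invariant continua: a point of $\wt K^*\cap K^*$ would be (pre)periodic and typically repelling, forcing a rational cut in $J(f)$, contradicting empty rational lamination. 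The precise mechanism is: if $x\in\wt K^*\cap K^*$ then $f(x)\in K^*$ and also $x\in f^{-1}(K^*)$; iterating, the orbit of $x$ stays in $K^*\cup\wt K^*$, and since $K^*$ is a proper subcontinuum of $K(f)$ with repelling boundary dynamics (all periodic points of $f$ other than $a$ are repelling), one produces a repelling (pre)periodic point that is a cutpoint of an invariant continuum, contradicting the last clause of Lemma \ref{l:gray-1}.

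The subtlety I expect to be the main obstacle is ensuring that the degree-$1$ branch actually exists as a \emph{separate} component rather than $f^{-1}(K^*)$ being connected. This is where the non-recurrence of $\om_2$ is presumably used — or perhaps only the fact that $\om_2\notin K^*$: since $\om_2\notin K^*$ and $\om_2$ is the only critical point outside the central branch, $f$ is a local homeomorphism in a neighborhood of $f^{-1}(K^*)\setminus U^*$, so the ``outer'' preimage of $K^*$ picks up no ramification and is a homeomorphic copy. Making this rigorous requires a careful local analysis near the boundary: one must check that the component of $f^{-1}(K^*)$ other than $K^*$ does not accidentally re-enter $U^*$ and merge with $K^*$. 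I would handle this by choosing $U^*$ small enough (using that $\om_2\notin\ol{U^*}$ after shrinking, as in Lemma \ref{l:cnct}) so that $f^{-1}(V^*)$ has exactly the component $U^*$ (degree $2$) and one further component $W^*$ (degree $1$) mapped biholomorphically onto $V^*$; then $\wt K^*:=(f|_{W^*})^{-1}(K^*)$ is the desired set, disjoint from $K^*$ because $W^*\cap U^*=\emptyset$ by construction. The homeomorphism property is then immediate from $f|_{W^*}$ being biholomorphic. I would close by remarking that this $\wt K^*$ will be the base case for an inductive construction of a whole tree of pullbacks in the subsequent argument, which is why only disjointness from $K^*$ (and not from all its pullbacks) is asserted here.
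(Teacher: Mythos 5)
Your final paragraph is exactly the paper's argument: $U^*$ is a component of $f^{-1}(V^*)$ (this follows already from properness of $f^*:U^*\to V^*$, since properness forces $\bd(U^*)$ to map into $\bd(V^*)$, hence $U^*$ is open and closed in $f^{-1}(V^*)$); the branched covering $f:f^{-1}(V^*)\to V^*$ has total degree $3$, of which $U^*$ accounts for $2$, so the remaining component $\wt U^*$ is unique, maps biholomorphically onto $V^*$, and one sets $\wt K^*=(f|_{\wt U^*})^{-1}(K^*)$. Disjointness is automatic because $\wt U^*$ and $U^*$ are distinct components, so no shrinking of $U^*$ is actually needed. The proposal is correct and, at its core, identical to the paper's proof.

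A few side remarks on the discarded portions of your reasoning. Your middle paragraph tries to rule out $\wt K^*\cap K^*\ne\0$ by invoking Lemma \ref{l:gray-1} and repelling periodic cutpoints; this is not available here, since the paper explicitly emphasizes that in this section no assumption is made about the rational lamination of $f$ (so you cannot appeal to empty rational lamination). Fortunately you never actually need it, because the component-counting argument already settles disjointness. Likewise, your instinct that non-recurrence of $\om_2$ might be required is not borne out: only $\om_2\notin U^*$ matters, which is forced by $f:U^*\to V^*$ having degree $2$. The cleaner way to see the structure is to skip directly to your final degree-counting paragraph and drop the earlier contradiction attempts.
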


\begin{proof}
Let $U^*$ be a PL neighborhood of $K^*$, then $U^*$ is a component of $f^{-1}(V^*)$, where $V^*=f(U^*)$.
The map $f:f^{-1}(V^*)\to V^*$ is proper, hence a branched covering of degree 3.
Since the degree of $f:U^*\to V^*$ is two, the remaining component $\wt U^*$ of $f^{-1}(V^*)$
 maps by $f$ homeomorphically onto $V^*$.
The lemma follows.
\end{proof}

The notation $\wt K^*$ will be used from now on. Also, from now on
by \emph{decorations} we mean those of $K$ rel. $K^*$.

\begin{dfn}
A decoration is said to be \emph{critical} if it contains $\wt K^*$.
Thus there is only one critical decoration denoted $D_c$.
All other decorations are said to be \emph{non-critical}.
\end{dfn}

Let $v_2=f(\om_2)$ be the critical value associated with the point $\om_2$.

\begin{lem}
\label{l:v-nin-k}
Neither $\om_2$ nor $v_2$ belong to $K^*$.
\end{lem}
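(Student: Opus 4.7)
The first assertion is immediate from the setup: $f^* = f|_{U^*}$ has degree $2$ as a QL map, so among the two critical points of the cubic $f$ exactly one, call it $\omega_1$, lies in $U^*$, while $\omega_2$, by definition, denotes the other critical point and hence lies outside $U^* \supset K^*$.

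The substantive content is $v_2 \notin K^*$, which I would establish by contradiction, using the pullback structure from Lemma \ref{l:pull}. Assume $v_2 = f(\omega_2) \in K^*$. Recall that $f^{-1}(V^*) = U^* \sqcup \wt U^*$, with $f|_{U^*}$ a degree-$2$ branched cover onto $V^*$ and $f|_{\wt U^*} : \wt U^* \to V^*$ a homeomorphism. The key step is the identification
\[
    f^{-1}(K^*) = K^* \sqcup \wt K^*.
\]
To see this, any $x \in U^*$ with $f(x) \in K^*$ has its entire forward orbit trapped in $U^*$ (because $K^* \subset U^*$ is $f^*$-invariant), hence $x \in K^*$; and $(f|_{\wt U^*})^{-1}(K^*) = \wt K^*$ because $f|_{\wt U^*}$ is a homeomorphism onto $V^* \supset K^*$.

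Since $\omega_2 \in f^{-1}(v_2) \subset f^{-1}(K^*) = K^* \cup \wt K^*$ and $\omega_2 \notin K^*$, one concludes $\omega_2 \in \wt K^* \subset \wt U^*$. But $f|_{\wt U^*}$ has degree $1$ and hence no critical points, contradicting the fact that $\omega_2$ is a critical point of $f$. This is the whole argument; there is no genuine obstacle, as everything reduces to the explicit description of the two sheets of $f^{-1}(V^*)$ already provided by Lemma \ref{l:pull}, together with the forward invariance of $K^*$.
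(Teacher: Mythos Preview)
Your proof is correct and rests on the same degree count as the paper's, just packaged differently. The paper argues in one line: if $v_2\in K^*$, then $v_2$ has two preimages (with multiplicity) in $K^*$ because $f|_{K^*}$ has degree~$2$, and two more preimages outside $K^*$ coming from the critical point $\om_2$ counted with multiplicity; this gives four preimages for a degree-$3$ map, a contradiction. Your version unwinds this through the explicit decomposition $f^{-1}(K^*)=K^*\sqcup\wt K^*$ from Lemma~\ref{l:pull}, forcing $\om_2\in\wt K^*$ and then invoking that $f|_{\wt U^*}$ is a homeomorphism. Both routes express the same arithmetic $2+2\ne 3$; the paper's direct count is shorter, while yours makes the geometry of the two sheets more explicit.
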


\begin{proof}
Clearly, $\om_2\notin K^*$. If $v_2\in
K^*$, then there are two preimages of $v_2$ in $K^*$ and two preimages
of $v_2$ outside of $K^*$ (both numbers take multiplicities into
account). This contradicts $f$ being three-to-one.
\end{proof}

To study iterated pullbacks of $K^*$ we use an important result of R. Ma\~n\'e.

\begin{thm}[\cite{man93}]
\label{t:mane} If $f:\ol\C\to\ol\C$ is a rational map and $z\in\ol\C$ a
point that does not belong to the limit set of any recurrent critical
point, then, for some $C>0$, some $0<q<1$, and some Jordan disk $W$ around $z$,
the spherical diameter of any component of $f^{-n}(W)$ is less than $Cq^n$.
\end{thm}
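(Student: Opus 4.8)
\smallskip
\noindent\textbf{Idea of the proof.} Theorem~\ref{t:mane} is a classical result of Ma\~n\'e \cite{man93}; we only sketch its proof, since we use it as a black box. In its sharp form it is a dichotomy, and the exponential shrinking is what one obtains once $z$ is known to lie in $J(f)$ and not to be a parabolic periodic point (the only situation we need). The plan is to establish, in order: (i) a uniform bound $N$ on the valence of iterated pullbacks of a small disk around $z$; (ii) uniform shrinking of the diameters of these pullbacks; and (iii) the exponential rate.

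\emph{Step (i): bounded valence.} One shows that there is a Jordan disk $U_0\ni z$ such that for every $n$ and every component $V$ of $f^{-n}(U_0)$ the proper map $f^n\colon V\to U_0$ has degree at most $N$; equivalently, every backward chain $V_0\leftarrow V_1\leftarrow\cdots$ with $f(V_{i+1})=V_i$ meets at most $N-1$ critical points of $f$, counted with multiplicity. This is the heart of the theorem and the only step that uses the hypothesis on $z$. Arguing by contradiction: if the valence is unbounded, then shrinking disks $U_j\downarrow\{z\}$ admit pullback chains picking up arbitrarily many critical points; as $f$ has finitely many critical points, some fixed critical point $c$ is then met at chain-levels with ``distance to the top'' $m_j\to\infty$, so $f^{m_j}(c)\in U_j$ and hence $f^{m_j}(c)\to z$, i.e.\ $z\in\om(c)$. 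It remains to arrange that $c$ be \emph{recurrent}: either some chain meets $c$ at two different levels, which forces a forward image of $c$ to return arbitrarily close to $c$; or, rerunning the pullback construction starting from a small disk about $c$, one extracts from $\om(c)$ a recurrent critical point $c'$ with $z\in\om(c')$. Either way the assumption on $z$ is violated. Making this dichotomy precise, uniformly over the finitely many critical points, is the main obstacle.

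\emph{Steps (ii) and (iii).} Granting (i), these are routine. If uniform shrinking failed, one would have components $V_j$ of $f^{-n_j}(U_j)$, $U_j\downarrow\{z\}$, with $\operatorname{diam}V_j\ge\eps$ and $\deg(f^{n_j}\colon V_j\to U_j)\le N$; normalizing the domains $V_j$ (Carath\'eodory kernel convergence) and using that the maps $f^{n_j}\colon V_j\to U_j$ are conformally bounded-degree Blaschke products, a subsequential limit is a non-constant holomorphic map whose image has diameter $\ge\eps$ and consists of limits of backward orbits staying near $z\in J(f)$ --- forcing $J(f)$ to have nonempty interior, which is impossible when $J(f)\ne\ol\C$ (the case $J(f)=\ol\C$ requiring a separate direct expansion argument). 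Finally, fix $\eps$ small and $U\subseteq U_0$ with all pullbacks of diameter $<\eps$, and take $W\Subset U$: in any backward chain for $W$ all but boundedly many (at most $\log_2 N$) single-step maps are univalent, each univalent step contracts the spherical diameter by a definite factor via the Koebe distortion theorem applied in the ``Koebe space'' supplied by the host pullback of $U$, and composing over the $\ge n-\log_2 N$ univalent steps yields $\operatorname{diam}<Cq^n$ --- this is the Shrinking Lemma for bounded-valence chains. Thus Step (i) is the decisive difficulty, while (ii) and (iii) are standard distortion and compactness arguments.
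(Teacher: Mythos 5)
The paper does not prove this statement; Theorem~\ref{t:mane} is quoted as a black box from Ma\~n\'e's paper \cite{man93}, so there is no ``paper's own proof'' to compare yours against. Your decision to sketch rather than prove is therefore the right call, and your outline does follow the standard architecture of Ma\~n\'e's argument: the heart of the matter is the bounded-valence statement (i), the uniform shrinking (ii) then follows from normal-family/compactness considerations, and (iii) upgrades the shrinking to an exponential rate. You also correctly flag that making the recurrent-critical-point dichotomy in Step~(i) precise is where the real work lies.

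Two cautions, though. First, the version quoted in the paper with the explicit exponential rate $Cq^n$ is stronger than Ma\~n\'e's original statement, which asserts only bounded valence plus uniform shrinking of diameters (and then shrinking below any $\eps_1$ for $n$ large); the exponential form requires a further argument. Second, and more substantively, your Step~(iii) mechanism is not correct as written: Koebe distortion applied to a single univalent pullback step, even with a definite annulus of ``Koebe space'' around the disk, controls \emph{distortion}, not \emph{contraction}. Univalent inverse branches of a rational map near the Julia set need not contract the spherical metric by any definite factor per iterate (there is no a priori hyperbolicity), so the claim ``each univalent step contracts the spherical diameter by a definite factor'' is false in general, and the composition argument that follows it does not go through. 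To get the exponential rate one has to use the non-recurrence hypothesis more seriously --- for example, via a covering by finitely many Ma\~n\'e neighborhoods and a telescoping/second-visit argument, or by extracting uniform contraction over blocks of iterates from the uniform shrinking in part (ii) together with bounded valence --- rather than step-by-step Koebe. Since the paper treats the theorem as cited background, this does not affect anything downstream, but the sketch as written overstates what Koebe gives you.
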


Neighborhoods from  Theorem \ref{t:mane} are called \emph{Ma\~n\'e neighborhoods}.

\begin{thm}\label{t:kcollapse}
The iterated pullbacks of $K^*$ form a null sequence.
\end{thm}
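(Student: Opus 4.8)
The plan is to deduce the claim from Mañé's theorem (Theorem \ref{t:mane}) applied to the polynomial $f$ viewed as a rational map of $\ol\C$. The hypothesis of Section \ref{s:deco} is in force: the critical point $\om_2$ of $f$ that does not belong to $K^*$ is \emph{not} recurrent. We must also control the other critical dynamics. Since $f$ is cubic with empty rational lamination and $K^*$ is a QL invariant filled Julia set, Corollary \ref{c:7.2} tells us $K^*$ contains the unique non-repelling fixed point of $f$; all periodic points of $f^*$ other than that fixed point are repelling, and $f^*$ itself is a quadratic-like map. The critical point inside $K^*$, call it $\om_1$, has its orbit trapped in the compact set $K^*$, so $\om_1$ is not in the $\omega$-limit set of $\om_2$ (which lies off $K^*$, cf. Lemma \ref{l:v-nin-k}); and $\om_2$ is not recurrent by assumption, hence not in its own limit set. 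Therefore \emph{no} point of $K^*$ lies in the limit set of a recurrent critical point of $f$: the only candidate recurrent critical point is $\om_1$, whose limit set is contained in $K^*$, and we need points of $K^*$ to be outside the limit set of recurrent critical points — this requires a small argument that $\om_1$'s limit set, if $\om_1$ is recurrent, is a proper recurrent subset of $J(f^*)$ disjoint from... actually the cleanest route is: cover $K^*$ by finitely many Mañé neighborhoods.

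Concretely, first I would check that every point $z\in K^*$ admits a Mañé neighborhood. By Theorem \ref{t:mane} this holds provided $z$ is not in the limit set of any recurrent critical point of $f$. The recurrent critical points of $f$ are among $\{\om_1,\om_2\}$; $\om_2$ is non-recurrent by hypothesis, so it is excluded. If $\om_1$ is recurrent, its limit set $\omega(\om_1)\subset K^*$, and a point of $K^*$ lying in $\omega(\om_1)$ would fail the hypothesis of Theorem \ref{t:mane}. This is the one genuine obstacle: one must argue that points of $\omega(\om_1)$ still admit shrinking-pullback control. The way around it is that we only need the pullbacks of $K^*$ to shrink, and $K^*$ is forward invariant under $f$: any pullback of $K^*$ that meets $K^*$ equals $K^*$ (using connectedness and that $f^{-1}(K^*)\cap K^* = K^*$ together with the uniqueness in Corollary \ref{c:7.2}). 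So we may restrict attention to pullbacks of $K^*$ \emph{other} than $K^*$ itself, and use instead Mañé neighborhoods of points of a slightly larger compact set — a PL neighborhood $\ol{U^*}$ of $K^*$ whose boundary is disjoint from $\omega(\om_1)\subset K^*$, and of the finitely many points of $\partial U^*$, none of which is in any recurrent critical limit set for suitably chosen $U^*$. Then by compactness finitely many Mañé disks $W_1,\dots,W_m$ cover $\partial U^*$, and there exist $C>0$, $0<q<1$ with every component of $f^{-n}(W_j)$ of diameter $<Cq^n$.

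Next I would run the standard pullback-shrinking argument. Let $\wt K^*_n$ be any $n$-th pullback of $K^*$ distinct from $K^*$ itself; I claim $\wt K^*_n$ lies in a single component of $f^{-n}(W_j)$ for some $j$. Indeed, the component $U_n$ of $f^{-n}(U^*)$ containing $\wt K^*_n$ maps onto $U^*$ as a branched cover; since $\wt K^*_n\ne K^*$, either $U_n\ne U^*$ or $U_n=U^*$ but the relevant pullback branch is the homeomorphic one — in either case the boundary $\partial U_n$ maps into $\partial U^*\subset \bigcup W_j$, and $\partial U_n$ is connected (as $U_n$ is simply connected), so $\partial U_n$ lies in one component $Z$ of some $f^{-n}(W_j)$; then $\dia(\wt K^*_n)\le \dia(\thu(\partial U_n)) \le \dia(Z) < Cq^n$, using that $\wt K^*_n$ is inside the topological hull of $\partial U_n$. (Some care: one must track that there are boundedly many pullback components at each level and organize the bookkeeping so the constant $C$ is uniform; this is routine given the finite cover.) Hence $\dia(\wt K^*_n)\to 0$ uniformly over all choices, i.e. the iterated pullbacks of $K^*$ form a null sequence.

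The main obstacle, as indicated, is legitimizing the application of Theorem \ref{t:mane} near $K^*$ when $\om_1$ may be recurrent inside $K^*$: one cannot take Mañé neighborhoods of interior points of $K^*$ directly. The fix is to work with the \emph{boundary} of a well-chosen PL neighborhood $U^*$ rather than with $K^*$ itself, exploiting forward invariance of $K^*$ to discard the trivial pullback $K^*$, and to choose $\partial U^*$ avoiding the (compact, nowhere dense, contained in $K^*$) limit set of $\om_1$; the existence of such $U^*$ follows by shrinking the QL neighborhood as in Definition \ref{d:ql1} and Lemma \ref{l:cnct}. Everything else is a standard compactness-plus-exponential-shrinkage argument.
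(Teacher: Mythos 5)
Your instinct about the obstacle is right: one cannot put Ma\~n\'e neighborhoods around points of $K^*$ because $\om_1$ may be recurrent with $\omega(\om_1)\subset K^*$, and your remedy of covering a compact set disjoint from $K^*$ by Ma\~n\'e disks is sound (you propose $\partial U^*$; the paper covers $\wt K^*$ from Lemma~\ref{l:pull}, which accomplishes the same thing). However, there is a genuine gap at the crucial step, and it is exactly the step you wave off as routine.

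The assertion ``$\partial U_n$ is connected \dots\ so $\partial U_n$ lies in one component $Z$ of some $f^{-n}(W_j)$'' is false. A connected set covered by the open sets $\{$components of $f^{-n}(W_j)\}$ need not lie in a single one of them; it can chain through many, and the diameter bound you get is (number of components in the chain)$\times Cq^n$, not $Cq^n$. The number of components needed to cover $\partial U_n$ is \emph{not} bounded uniformly in $n$: it roughly doubles each time $\om_2$ enters $U_n$, because the degree of $f^n\colon U_n\to U^*$ jumps there, and there is no a priori reason this happens only finitely often or with any spacing. Your parenthetical claim that ``there are boundedly many pullback components at each level'' is simply not true, and ``routine given the finite cover'' is where the entire difficulty hides. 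The paper's proof spends essentially all of its effort on exactly this: it tracks the count $k_n$ of Ma\~n\'e pieces in the $n$-th pullback of a tight neighborhood $V$ of $\wt K^*$, notes that $k_n$ can double only at the times $n_i$ when $\om_2\in V_{n_i}$, and then defeats the doubling by a spacing estimate $n_{i+1}-n_i\ge N$ combined with choosing $N$ so that $2q^N<1/2$, yielding $\dia(V_n)\le Ck_0(1/\sqrt[N]{2})^n$. The spacing comes from choosing $V$ tight enough that $f^m(V)\cap V=\emptyset$ for $1\le m\le N$, which in turn uses that $f^m(\wt K^*)=K^*$ is \emph{disjoint} from $\wt K^*$ for every $m\ge1$. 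This is the structural feature your choice of $\partial U^*$ lacks: $U^*$ meets its forward images (indeed $f(U^*)\supset U^*$), so no analogous spacing holds for visits of the orbit of $\om_2$ to $U^*$. Thus even after correcting the chaining estimate, your framework is missing the tool that controls the growth of the piece count; passing to a neighborhood of $\wt K^*$ rather than of $\partial U^*$ is not a cosmetic change but the mechanism that makes the bookkeeping close.
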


\begin{proof}
By Theorem \ref{t:mane}, by Lemma \ref{l:pull}, and by our assumptions there exist $0<q<1$, $C>0$, and a finite cover of $\wt K^*$
by open Jordan disks $W_1, \dots, W_N$ such that, for any disk $U$ contained in some disk $W_i$, we have that any $n$-th pullback of $U$
is of spherical diameter less than $Cq^n$. Choose $N$ so that $2q^N<1/2$. Then choose a tight neighborhood $V=V_0\subset \bigcup W_i$
of $\wt K^*$ so that $f^N(V)\cap V=\0$.
We may assume that $V=\bigcup_{j=1}^{k_0} U_j$ where $U_j$ are open Jordan disks that are
Ma\~n\'e neighborhoods. Consider pullbacks $V_1, V_2, \dots$ of $V$.
For each $n$ the set $V_n$ is the union of, say, $k_n$
pullbacks of the sets $U_j$ each of which is of diameter at most $Cq^n$. Hence the diameter of $V_n$ is at most $Ck_nq^n$.
The number $k_n$ stays the same as long as the pullbacks of $V$ do not contain $\om_2$, and almost doubles on the step when the
previous pullback of $V$ contains $f(\om_2)$ and, therefore, the next pullback of $V$ contains $\om_2$.
Denote by $n_i$ the $i$-th moment with $\om_2\in V_{n_i}$. This means that $f^{n_i}(\om_2)\in V$ and
$f^{n_{i+1}}(\om_2)\in V$, which implies that $n_{i+1}-n_i\ge N$.

Let us now estimate the diameter of $V_m$ from above. We have $\dia(V_0)\le Ck_0$. We claim that
$$\dia(V_m)\le Ck_0\cdot (1/\sqrt[N]{2})^m.$$
Indeed, one can argue by induction. Then on each step when the pullback of $V$ does not contain
$\om_2$ the immediately preceding upper bound is (by Theorem \ref{t:mane}) multiplied
by $q<1/\sqrt[N]{2}$. Now, consider $\dia(V_{n_{i+1}})$. By induction
$$\dia(V_{n_i})\le Ck_0\cdot (1/\sqrt[N]{2})^{n_i},$$
and since $2q^N<1/2$, then we have
$$\dia(V_{n_{i+1}})\le
Ck_0 (2q^N) \left(\frac 12\right)^{\frac{n_{i+1}-n_i}N-1}\left(\frac 12\right)^{\frac {n_i}N}\le
Ck_0\cdot (1/\sqrt[N]{2})^{n_{i+1}},
$$
as desired.
\end{proof}

\begin{lem}\label{l:deconto}
Suppose that $D$ is a decoration and $E$ is a pullback of $D$. Then $f(E)=D$.
\end{lem}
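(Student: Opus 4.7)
The plan is to reduce the lemma to a standard statement about polynomial pullbacks of full continua, combined with an accumulation analysis for pullbacks of decorations.

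Let $T=\ol D\cap K^*$. By Lemma \ref{l:triv}, $T$ is a continuum and $\ol D=D\cup T$ is a full continuum with $D\cap T=\0$. By the proof of Lemma \ref{l:pull}, $f^{-1}(K^*)=K^*\cup\wt K^*$, so $f^{-1}(T)\subset K^*\cup\wt K^*$ is disjoint from $f^{-1}(D)$. Since $E$ is a connected component of $f^{-1}(D)$, it is closed in the subspace $f^{-1}(D)$, which means $\ol E\cap f^{-1}(D)=E$ (closures in $\C$). Continuity of $f$ yields $\ol E\subset f^{-1}(\ol D)$, hence $\ol E\sm E\subset f^{-1}(T)$, $f(\ol E)\subset D\cup T=\ol D$, and $f(\ol E)\cap D=f(E)$. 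Thus the lemma reduces to proving $D\subset f(\ol E)$.

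The first ingredient is the standard fact that for any polynomial $g$ of degree $d$ and full continuum $Q\subset\C$, each connected component of $g^{-1}(Q)$ is a full continuum that $g$ maps onto $Q$. The key observation is that $g$ has local degree $d$ at $\infty$, so the component of $\widehat{\C}\sm g^{-1}(Q)$ containing $\infty$ covers the simply connected domain $\widehat{\C}\sm Q$ as a proper holomorphic map of full degree $d$, and therefore must be the unique complementary component of $g^{-1}(Q)$ in $\widehat{\C}$. Surjectivity of each planar component of $g^{-1}(Q)$ onto $Q$ is then obtained by choosing a regular point in the boundary (in $Q$) of the image and lifting via a local inverse of $g$. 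Applied with $g=f$ and $Q=\ol D$, this gives $f(E^\dagger)=\ol D$, where $E^\dagger$ denotes the component of $f^{-1}(\ol D)$ containing $\ol E$.

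The principal obstacle is to upgrade $f(E^\dagger)=\ol D$ to $D\subset f(\ol E)$; equivalently, to show $\ol E=E^\dagger$. A priori, distinct components of $f^{-1}(D)$ might share limit points inside $f^{-1}(T)$ and merge into a single component of $f^{-1}(\ol D)$, leaving $\ol E\subsetneq E^\dagger$. To rule this out I would set up an analog of Proposition \ref{p:pends} for the pair $K^*\cup\wt K^*\subset K$: the required Ray Assumption follows from Lemma \ref{l:raya1}, noting that $f$-pullbacks of $K$-rays landing on periodic repelling points of $K^*$ are themselves $K$-rays, and they land on the $f$-preimages of those periodic points, providing a dense supply of rays landing on (pre)periodic points of both $K^*$ and $\wt K^*$. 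Running the proof of Proposition \ref{p:pends} with this adapted ray supply shows that any pullback of $D$ accumulates on the impression of a single prime end of $K^*\cup\wt K^*$, so that distinct pullbacks of $D$ have pairwise disjoint closures. Consequently $\ol E=E^\dagger$, giving $f(\ol E)=\ol D$ and hence $f(E)=f(\ol E)\cap D=D$.
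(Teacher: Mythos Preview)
Your approach is substantially more complicated than necessary, and the final step contains a real gap. The paper's proof is essentially two lines: since $f^{-1}(K^*)=K^*\cup\wt K^*$, the restriction $f:\C\sm(K^*\cup\wt K^*)\to\C\sm K^*$ is proper holomorphic, hence a branched covering, hence both an open and a closed map. As $E$ is a component of the preimage of the connected set $D$ under this branched covering, $f(E)$ is nonempty, open, and closed in $D$, so $f(E)=D$.

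Your route through closures and full-continuum pullbacks stumbles at the claim $\ol E=E^\dagger$. First, even granting that each pullback of $D$ accumulates on the impression of a single prime end of $K^*\cup\wt K^*$ (which already needs care, since $K^*\cup\wt K^*$ is disconnected and the Carath\'eodory/prime-end setup must be done component-wise), you never argue that \emph{distinct} pullbacks correspond to \emph{distinct} prime ends; Proposition~\ref{p:pends} asserts no such injectivity, and two pullbacks could well share a prime end (or have prime ends with overlapping impressions). Second, and more seriously, pairwise disjoint closures of pullbacks would still not force $\ol E=E^\dagger$. The component $E^\dagger$ of $f^{-1}(\ol D)$ may contain several pullbacks $E,E',\dots$ joined through a connected piece of $f^{-1}(T)\subset K^*\cup\wt K^*$, without any two of $\ol E,\ol{E'},\dots$ meeting each other directly. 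In that situation your argument yields only $f(E)\cup f(E')\cup\dots=D$, not $f(E)=D$. The branched-covering observation sidesteps all of this.
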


\begin{proof}
Observe that $f:\C\sm (K^*\cup\wt K^*)\to\C\sm K^*$ is a branched covering,
 and $E$ is a pullback of $D$ under this branched covering.
It follows that $E$ is mapped under $f$ onto a closed and open subset of $D$
 (since a branched covering is an open and closed map), i.e., onto $D$ itself.
\end{proof}

Let us now consider the map $\Psi$ that collapses $K^*$
 and all iterated pullbacks of $K^*$ to points; the notation introduced below
 will be used in what follows.
Since the grand orbit of $K^*$ is a null sequence of full continua,
by Moore's Theorem, $\Psi(\C)$ can be identified with $\C$, so that
$\Psi: \C\to \C$ is a monotone onto map. It is easy to see that it induces a branched covering map
$f_\Psi:\C\to \C$; moreover, $K^*$ then maps to a fixed critical repelling point $\Psi(K^*)=a^*$
 (the map $g$ is, of course, not smooth).
The set $\wt a^*=\Psi(\wt K^*)$ is also a point since $\wt K^*$ is a pullback of $K^*$.
Abusing notation, we will often identify points $x$ and singletons $\{x\}$ in what follows.

\begin{dfn}
  Let $\Sigma$ be a simple closed curve in $\wh\C:=\C\cup\{\infty\}$ containing points $a^*$ and $\infty$.
Suppose that $\Sigma$ is disjoint from $\Psi(K\sm K^*)$ but separates this set.
Then $\Sigma$ is called a \emph{$\Psi$-cut} (of $\Psi(K\sm K^*)$).
A \emph{$\Psi$-sector} is defined as any complementary component to a $\Psi$-cut; this is an open Jordan disk.
Define a \emph{preimage $\Psi$-sector} as the $\Psi$-preimage of a $\Psi$-sector.
Clearly, a preimage $\Psi$-sector is an open topological disk but not necessarily a Jordan domain.
\end{dfn}

Recall the following topological result playing an important role in polynomial dynamics.

\begin{thm}[Theorem 6.6 \cite{mcm94}]
\label{t:mcm}
For any pair of decorations $D\ne D'$, there is a $\Psi$-sector containing $\Psi(D)$ and disjoint from $\Psi(D')$.
Hence there is a preimage $\Psi$-sector containing $D$ and disjoint from $D'$.
\end{thm}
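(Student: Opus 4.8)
The plan is to construct the separating Jordan curve by hand and then read off the sectors. Since $\Psi$ collapses only $K^*$ and its iterated pullbacks, all of which lie in $K$, the restriction $\Psi\colon\wh\C\sm K\to\wh\C\sm Z$ is a homeomorphism, where $Z:=\Psi(K)$ is a full continuum, $\Psi^{-1}(a^*)=K^*$, and $\Psi(K\sm K^*)=Z\sm\{a^*\}$ is the disjoint union of the decoration images $\Psi(D)$, the closure of each $\Psi(D)$ in $Z$ adding only the point $a^*$ (as in Lemma~\ref{l:triv} applied to $\{a^*\}\subset Z$). Thus it suffices to produce two disjoint $K$--rays $R_+,R_-$ landing at \emph{distinct} points $q_+,q_-\in\partial K^*$ such that the continuum $K^*\cup\ol{R_+}\cup\ol{R_-}$ (with $\ol{R_\pm}$ including $q_\pm$ and $\infty$) separates $D$ from $D'$ in $\wh\C$: indeed $K^*\cup\ol{R_+}\cup\ol{R_-}$ then has exactly two complementary components, their $\Psi$--images are the two complementary components of the simple closed curve $\Sigma:=\Psi(\ol{R_+}\cup\ol{R_-})$ (which passes through $a^*$ and $\infty$ and meets $Z$ only at $a^*$), one such $\Psi$--sector contains $\Psi(D)$ and misses $\Psi(D')$, and its $\Psi$--preimage is the required preimage $\Psi$--sector.

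The raw material comes from Proposition~\ref{p:pends} (together with Lemma~\ref{l:raya1}): there is a dense set $A\subset\uc$ of $\psi^*$--arguments such that for each $\beta\in A$ the point $z_\beta$ is the unique point of $\uc$ on which the $\psi^*$--image of some $K$--ray landing at a (pre)periodic point of $K^*$ accumulates, and every such $\psi^*$--image is disjoint from $\psi^*(K\sm K^*)$, hence from every decoration. By the same proposition $\psi^*(D)$ accumulates on the single point $z_{\arg_2(D)}$ and $\psi^*(D')$ on $z_{\arg_2(D')}$.

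Assume first $\arg_2(D)\ne\arg_2(D')$. Pick $\beta_+,\beta_-\in A$, one in each open arc of $\uc\sm\{z_{\arg_2(D)},z_{\arg_2(D')}\}$ and each bounded away from the two endpoints, and let $R_\pm$ be corresponding $K$--rays, landing at $q_\pm\in\partial K^*$. Rays landing at (pre)periodic points are rational, and $f$ has empty rational lamination, so no two distinct such rays co--land; hence $q_+\ne q_-$ and $\ol{R_+}\cup\ol{R_-}$ is a crosscut of the disk $\wh\C\sm K^*$ joining the prime ends $\beta_+,\beta_-$. It cuts $\wh\C\sm K^*$ into two Jordan domains whose prime--end arcs are the two components of $\uc\sm\{z_{\beta_+},z_{\beta_-}\}$, one containing $z_{\arg_2(D)}$ and the other $z_{\arg_2(D')}$. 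As $D$ (resp.\ $D'$) is connected, disjoint from $K^*\cup\ol{R_+}\cup\ol{R_-}$, and comes (in the $\psi^*$--picture) arbitrarily close to $z_{\arg_2(D)}$ (resp.\ $z_{\arg_2(D')}$), it lies entirely in the corresponding domain; so $D$ and $D'$ lie in distinct complementary components of $K^*\cup\ol{R_+}\cup\ol{R_-}$, and the first paragraph finishes this case.

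The genuine difficulty is the case $\arg_2(D)=\arg_2(D')$, which is precisely the content of Theorem~6.6 of \cite{mcm94}: now $\psi^*(D)$ and $\psi^*(D')$ accumulate at the \emph{same} prime end of $\C\sm K^*$, so no single crosscut built from $K$--rays separates them. To handle it one passes to the induced branched covering $f_\Psi$, for which $a^*$ is a repelling fixed critical point, separates suitably high forward iterates of the two decorations by a crude curve (using expansion near $a^*$), and pulls this curve back step by step; the essential inputs are that the grand orbit of $K^*$ is a null sequence (the analogue of Theorem~\ref{t:kcollapse}), so that the successive pullbacks do not re--merge the two images, together with the absence of periodic cutpoints of invariant subcontinua of $K^*$ (Lemma~\ref{l:gray-1}), which prevents $D$ and $D'$ from remaining inseparable all along the tower. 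I expect this pullback/tower step to be the main obstacle, and would carry it out following \cite{mcm94}.
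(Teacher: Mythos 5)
The paper does not actually prove Theorem~\ref{t:mcm}; it is stated as a quotation of McMullen's Theorem~6.6 from \cite{mcm94} (note the bracket in the theorem header and the preceding sentence ``Recall the following topological result\ldots''), so there is no ``paper's own proof'' to compare against. Your reduction is nevertheless worth assessing on its own.

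Your Case~1 argument ($\arg_2(D)\ne\arg_2(D')$) is sound in outline: two $K$--rays landing at distinct (pre)periodic points of $K^*$, one on each side in the $\psi^*$--picture, plus $K^*$, collapse under $\Psi$ to a $\Psi$--cut separating $\Psi(D)$ from $\Psi(D')$. One caveat: you appeal to empty rational lamination to get $q_+\ne q_-$, but Section~\ref{s:deco} explicitly drops that hypothesis (``We emphasize that in Section~\ref{s:deco} we put no restrictions on the rational lamination of $f$''); the same goes for your appeal to Lemma~\ref{l:gray-1} later. Fortunately this is easy to patch in Case~1, since one only needs to select two distinct non-co-landing (pre)periodic landing points of $K^*$, and these abound regardless of the rational lamination of $f$.

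The genuine gap is Case~2, $\arg_2(D)=\arg_2(D')$, which you correctly identify as the real content of the statement (the paper itself warns that ``different decorations may a priori have the same quadratic arguments''). Here both $\psi^*(D)$ and $\psi^*(D')$ accumulate at the same prime end of $\C\sm K^*$, so no cut built from $K$--rays can separate them, and a purely planar separation is not automatic: two disjoint connected sets accumulating only at a common boundary point of a Jordan domain can interleave in a way that prevents separation by any curve through that point. Your proposed fix --- separate high forward iterates near the repelling fixed point $a^*$ of $f_\Psi$ and pull back --- is only a sketch; you do not address why the pulled-back curves remain disjoint from all of $\Psi(K\sm K^*)$, why they do not re-encircle one of the sets, or why the ``tower'' terminates at the original pair $D,D'$ rather than some forward image. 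You explicitly defer to \cite{mcm94} at this point, which concedes that the argument is incomplete. So as written this is a correct reduction plus an honest statement of where the difficulty lies, not a proof.
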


Let $f\in \imr$; then $K^*$ is unique by Lemma \ref{l:7.2}.
Assume that $V^*$ is very tight around $K^*$.
Let $\om_1$ be the critical point of $f$ belonging to $K^*$; let
$\om_2$ be the other critical point of $f$ (this notation will be used in what follows).
To indicate the dependence on $f$, we may write $K^*(f)$, $\om_2(f)$, etc.
We emphasize that in Section \ref{s:deco} we put \emph{no} restrictions on the rational lamination of $f$.
For $x\notin K^*$, let $D(x)$ be the decoration containing $x$; set
$D_v=D(v_2)$ and call it \emph{critical value decoration}.
Set $L$ to be $\{\om_2\}$ (if $\om_2\in J(f)$), or the closure of the Fatou domain of
$f$ containing $\om_2$ (if any).

Initial dynamical properties of decorations are listed in Theorem \ref{t:decor}.

\begin{thm}\label{t:decor}
The critical decoration $D_c$ maps onto $K(f)$ while any other decoration maps onto some decoration one-to-one.
Any decoration different from $D_v$ has three homeomorphic pullbacks;
 two of them are decorations, and one accumulates in $\wt K^*$.
The decoration $D_v$ has a homeomorphic pullback $D'_v$, which is itself a decoration, and a
 pullback that maps onto $D_v$ two-to-one, contains
 $\om_2$, is contained in $D_c$, and accumulates in both $K^*$ and $\wt K^*$.
\end{thm}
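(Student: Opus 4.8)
The plan is to analyze the branched covering $f:\C\sm(K^*\cup\wt K^*)\to\C\sm K^*$ of degree $3$, together with the global degree-$3$ structure of $f$, and to track where the two critical points $\om_1\in K^*$ and $\om_2\notin K^*$ go. First I would recall from Lemma \ref{l:deconto} that every pullback of a decoration $D$ maps \emph{onto} $D$, and from Lemma \ref{l:pull} that $f:\wt K^*\to K^*$ is a homeomorphism; in particular the grand orbit of $K^*$ is a null sequence by Theorem \ref{t:kcollapse}, so the collapsing map $\Psi$ and the induced branched covering $f_\Psi$ of Section~\ref{s:deco} are available. The point $a^*=\Psi(K^*)$ is a critical fixed point of $f_\Psi$ (local degree $2$, coming from $\om_1$), and $\wt a^*=\Psi(\wt K^*)$ is its preimage of local degree $1$; the remaining critical point of $f_\Psi$ is $\Psi(\om_2)$ (or rather the image of $L$), with local degree $2$.

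For the first sentence of the theorem: the critical decoration $D_c$ is the one containing $\wt K^*$, so a point of $D_c$ near $\wt K^*$ maps near $K^*$; since $f$ restricted to the complement of $K^*\cup\wt K^*$ has degree $3$ and $\wt K^*$ is a simple (degree-$1$) preimage of $K^*$, the decoration $D_c$ cannot be mapped homeomorphically onto a single decoration — instead, counting preimages of a generic point of $K\sm K^*$ shows $D_c$ surjects onto $K(f)=K$ (equivalently, $f_\Psi$ maps the "sector at $\wt a^*$" onto all of $\Psi(K)\setminus\{a^*\}$ plus $a^*$ itself). For any decoration $D\neq D_c$ not containing $\om_2$: $f|_D$ is a local homeomorphism onto its image, which is open and closed in a decoration (Lemma \ref{l:deconto} applied in reverse, or Lemma \ref{l:triv} plus connectedness), hence $f(D)$ is a decoration and the map is one-to-one. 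The decoration containing $\om_2$ is $D_c$ itself, by Theorem \ref{t:decor}'s last assertion (the pullback of $D_v$ carrying $\om_2$ lies in $D_c$), so this case is subsumed; thus every decoration $\ne D_c$ maps one-to-one onto a decoration.

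For the pullback count: fix a decoration $D$. Its full preimage $f^{-1}(D)$ meets the complement of $K^*\cup\wt K^*$ in a set mapping $3$-to-$1$ onto $D$; I would show each component is a pullback that maps homeomorphically onto $D$ unless it contains $\om_2$. If $D\neq D_v$, then $\om_2\notin f^{-1}(D)$ (since $v_2=f(\om_2)\notin D$), so all three pullbacks are homeomorphic copies of $D$. Two of them are decorations (their closures meet $K^*$ — use that $K^*$ and $\wt K^*$ are both full, and that a homeomorphic lift of the continuum $\ol D$ meeting $K$ must accumulate in exactly one of $K^*$ or $\wt K^*$); the remaining one accumulates in $\wt K^*$, hence is the piece of $D_c$ over $D$ and is not itself a decoration. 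If $D=D_v$, then $\om_2\in f^{-1}(D_v)$: exactly one component of $f^{-1}(D_v)$ contains $\om_2$, and since $\om_2$ is a simple critical point this component maps $2$-to-$1$ onto $D_v$, accumulating in $K^*$ (because $v_2\notin K^*$ by Lemma \ref{l:v-nin-k} but the lift passes through $\om_2$ which is adjacent to $K^*$ and $\wt K^*$ as it lies in $D_c$) and in $\wt K^*$; it is contained in $D_c$. The other component is a homeomorphic lift $D'_v$ that is a genuine decoration.

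The main obstacle I expect is the bookkeeping of \emph{which} lifts accumulate in $K^*$ versus $\wt K^*$ and the proof that a homeomorphic lift of a decoration is again a decoration (not, say, a union of a decoration with part of $K^*$). I would handle this via Theorem \ref{t:mcm} / Theorem \ref{t:mcm}'s preimage $\Psi$-sectors: collapsing the grand orbit of $K^*$ turns decorations into the "decorations rel. $a^*$" of $\Psi(K)$, and $f_\Psi$ is an honest degree-$3$ branched covering with a single critical fixed point $a^*$ of local degree $2$ and the second critical point at $\Psi(L)$; there the lifting count is the familiar one, and pulling back through $\Psi$ (which is a homeomorphism away from the grand orbit of $K^*$) transfers the conclusion. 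Care is needed precisely near $a^*$ and $\wt a^*$, where $\Psi$ is not injective, and that is where the accumulation statements ("accumulates in $\wt K^*$", "accumulates in both $K^*$ and $\wt K^*$") must be read off directly from the local degrees and from Lemma \ref{l:triv}.
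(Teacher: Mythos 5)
Your proposal follows the same strategy as the paper: collapse the grand orbit of $K^*$ via Moore's theorem, analyze the induced degree-$3$ branched covering $f_\Psi$ with critical points at $a^*$ (local degree $2$) and $\Psi(\om_2)$, and use the preimage $\Psi$-sectors of Theorem~\ref{t:mcm} to count pullbacks and to track whether each pullback accumulates in $K^*$ or in $\wt K^*$ --- which is exactly how the paper handles Steps~3 and~4. One organizational wrinkle: when arguing that every decoration $D\ne D_c$ maps one-to-one, you invoke the theorem's own last assertion (``the pullback of $D_v$ carrying $\om_2$ lies in $D_c$'') to dispose of the case $\om_2\in D$; to avoid circularity you should first carry out the $\Psi$-sector pullback analysis for $D_v$ (as you sketch in your third paragraph), deduce from it that the $\om_2$-component of $f^{-1}(D_v)$ accumulates in $\wt K^*$ and hence lies in $D_c$, and only then return to the forward-image claim --- this is also why the local-homeomorphism/open-and-closed remark alone does not yet give injectivity; the homeomorphic pullbacks of the $\Psi$-sector do.
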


\begin{proof}
Evidently, decorations are $\Psi$-preimages of the components of the set $X=\Psi(K\sm K^*)$.
The proof is divided into steps.

\smallskip

\noindent \textbf{Step 1.} \emph{If $D$ is a decoration of $f$, then
every pullback of $D$ is a subset of some decoration of $f$. Moreover,
if $D'$ and $D''$ are decorations and $f(D')\cap D''\ne \0$, then
$f(D')\supset D''$}.

\smallskip

\noindent \emph{Proof of Step 1}.
Let $E$ be a pullback of $D$.
Clearly, $E\subset K\sm K^*$.
Since $E$ is connected, it must lie in some decoration.
Now, if $D'$ and $D''$ are decorations and $f(D')\cap D''\ne \0$,
 then we can choose a pullback $E''$ of $D''$, which is non-disjoint from $D'$.
By the above, $E''\subset D'$, and, by Lemma \ref{l:deconto}, we have $f(E'')=D''$.

\smallskip

\noindent \textbf{Step 2.} \emph{If $D$ is a non-critical decoration, then $f(D)$ is a decoration.}

\smallskip

\noindent \emph{Proof of Step 2.} The set $f(D)$ is connected and disjoint
from $K^*$ (by definition of a non-critical decoration). Hence it is
contained in one decoration. By Step 1, the set $f(D)$ coincides with this
decoration.

\smallskip

\noindent \textbf{Step 3.} \emph{Suppose that $D$ is not a critical value decoration.
Then all $f$-pullbacks of $D$ map forward one-to-one. Only two of these pullbacks are decorations;
the remaining pullback accumulates in $\wt K^*$.}

\smallskip

\noindent\emph{Proof of Step 3.}
Let $S$ be a preimage $\Psi$-sector containing $D$ and disjoint from $v_2=f(\omega_2)$
(it exists by Theorem \ref{t:mcm}).
Then $S$ contains no critical values of $f$, and there are three homeomorphic pullbacks of $S$.
Two of these pullbacks accumulate in $K^*$ and one accumulates in $\wt K^*$.

\smallskip

\noindent\textbf{Step 4.} \emph{Suppose that $D=D_v$ is the critical value decoration.
Then one pullback of $D$ is a decoration mapping one-to-one onto $D$, and the other
 pullback connects $K^*$ with $\wt K^*$ and maps forward two-to-one.}

\smallskip

\noindent\emph{Proof of Step 4.}
Similarly to step 3, consider a $\Psi$-sector $S_\Psi$ bounded by a $\Psi$-cut $\Sigma$ of $\C$.
Then $S_\Psi$ contains the critical value $\Psi(v_2)$ of $f_\Psi$, hence there are two pullbacks of $S_\Psi$:
 one pullback is attached to $a^*$ and maps forward one-to-one;
 the other pullback is bounded by two pullbacks of $\Sigma$ passing through $a^*$ and $\wt a^*$.
The statement follows.
\end{proof}

Recall that $\psi^*$ conjugates $f$ with $z\mapsto z^2$ near $K^*$.

\begin{lem}\label{l:z2}
If $D\ne D_c$ is a decoration, then $\arg_2(f(D))=2\arg_2(D)$.
In the notation of Theorem $\ref{t:decor}$, we have that $\arg_2(D_v)=2\arg_2(D_c)$.
\end{lem}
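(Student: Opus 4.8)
The plan is to use only two ingredients: $\psi^*$ conjugates $f$ to $z\mapsto z^2$ on $U^*\sm K^*$, and, by the remark following Definition \ref{d:qarg}, for every decoration $D$ the set $\psi^*(D)=\psi^*(D\cap U^*)$ accumulates on $\uc$ at exactly one point, namely $e^{2\pi i\arg_2(D)}$. Since $z\mapsto z^2$ doubles arguments on $\uc$, tracking a ``tail'' of $D$ near $K^*$ under $f$ forces the quadratic argument to double.

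First I would treat a non-critical decoration $D$; write $\al=\arg_2(D)$, so that $f(D)$ is again a decoration by Theorem \ref{t:decor}. Choose $p\in\ol D\cap K^*$ (nonempty by Lemma \ref{l:triv}) and a sequence $d_n\in D$ with $d_n\to p$. For $n$ large we have $d_n\in U^*$ and, since $f(p)\in K^*\subset U^*$, also $f(d_n)\in U^*$; moreover $d_n\notin K^*$ and $f(d_n)\in f(D)\subset K\sm K^*$, so $\psi^*$ is defined at $d_n$ and at $f(d_n)$ and $\psi^*(f(d_n))=\psi^*(d_n)^2$. Because $d_n\to p\in K^*$ we get $|\psi^*(d_n)|\to1$; hence every subsequential limit of $\psi^*(d_n)$ lies on $\uc$, and (as $\psi^*$ avoids $\uc$) such a limit is necessarily an accumulation point of $\psi^*(D)$ on $\uc$, hence equals $e^{2\pi i\al}$. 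Thus $\psi^*(d_n)\to e^{2\pi i\al}$ and $\psi^*(f(d_n))\to e^{2\pi i\cdot2\al}$. Since $\psi^*(f(d_n))\in\psi^*(f(D))$ and the $\psi^*(f(d_n))$ cannot be eventually constant (their limit lies on $\uc$), $e^{2\pi i\cdot2\al}$ is an accumulation point of $\psi^*(f(D))$ on $\uc$, hence equals $e^{2\pi i\arg_2(f(D))}$. This gives $\arg_2(f(D))=2\al$.

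For the second assertion the issue is that $f(D_c)=K(f)$ is not a decoration, so instead I would run the same argument along the distinguished pullback of $D_v$ supplied by Theorem \ref{t:decor}: the connected set $E$ with $\om_2\in E\subset D_c$, $f(E)=D_v$, accumulating in $K^*$. As $E\subset D_c$, the set $\psi^*(E)\subset\psi^*(D_c)$ can accumulate on $\uc$ only at $e^{2\pi i\arg_2(D_c)}$, and it does accumulate there since $E$ meets every neighborhood of $K^*$. Now pick $p\in\ol E\cap K^*$ and $d_n\in E$ with $d_n\to p$; then $d_n\notin K^*$ (as $E\subset D_c$), $f(d_n)\in D_v$ so $f(d_n)\notin K^*$, and $d_n,f(d_n)\in U^*$ for large $n$. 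Exactly as above, $\psi^*(f(d_n))=\psi^*(d_n)^2\to e^{2\pi i\cdot2\arg_2(D_c)}$, and since $\psi^*(f(d_n))\in\psi^*(D_v)$ this point is the unique circle-accumulation point of $\psi^*(D_v)$; hence $\arg_2(D_v)=2\arg_2(D_c)$.

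The only delicate point --- and the main (if minor) obstacle --- is the availability of the identity $\psi^*(f(z))=\psi^*(z)^2$ at the chosen points: it requires $z$ and $f(z)$ to lie simultaneously in the domain $U^*\sm K^*$ of $\psi^*$, which is exactly why one argues along a sequence approaching $K^*$ rather than with the whole decoration at once. The identity itself follows from the construction of $\psi^*$ as $\beta\circ\vp$, where $\vp$ is the hybrid conjugacy (so $\vp\circ f=P\circ\vp$ on $U^*$) and $\beta$ is the B\"ottcher coordinate of the quadratic representative $P$, which conjugates $P$ to $z\mapsto z^2$ throughout $\C\sm K(P)$. Everything else is routine point-set topology about accumulation on $\uc$.
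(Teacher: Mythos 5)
The paper states Lemma \ref{l:z2} without proof, evidently regarding it as immediate from the preceding remark that $\psi^*$ conjugates $f$ to $z\mapsto z^2$ near $K^*$. Your argument is a correct and careful spelling-out of exactly that intended reasoning, including the right way to handle $D_c$ (running the argument on the distinguished pullback $E\subset D_c$ of $D_v$ from Theorem \ref{t:decor}, since $f(D_c)=K(f)$ is not a decoration) and the right resolution of the domain issue by working along a sequence $d_n\to p\in\ol D\cap K^*$ so that both $d_n$ and $f(d_n)$ lie in $U^*\sm K^*$ where the identity $\psi^*(f(z))=\psi^*(z)^2$ holds.
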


Corollary \ref{c:2pbdec} follows from Proposition \ref{p:pends}.

\begin{cor} \label{c:2pbdec}
Let $D\not=D_v$ be a decoration with quadratic argument $\al$.
Then both elements of $\si_2^{-1}(\al)$ are the quadratic arguments of the decorations
containing pullbacks of $D$.
These decorations are distinct.
\end{cor}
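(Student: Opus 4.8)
The plan is to pin down the two pullback decorations using Theorem~\ref{t:decor}, compute their quadratic arguments from Lemma~\ref{l:deconto} and Lemma~\ref{l:z2}, and then separate them by means of the $z\mapsto z^2$ model of $f$ near $K^*$.

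First I would apply Theorem~\ref{t:decor}. Since $D\ne D_v$, the decoration $D$ has exactly three homeomorphic pullbacks, precisely two of which are decorations; call these $D_1$ and $D_2$, and let $D_3$ denote the third pullback, the one accumulating in $\wt K^*$ (and not in $K^*$). Neither $D_1$ nor $D_2$ can equal $D_c$: if $D_c$ were a component of $f^{-1}(D)$ we would get $f(D_c)\subset D$, contradicting $f(D_c)=K(f)$. Consequently Lemma~\ref{l:z2} applies to $D_1$ and $D_2$, and together with $f(D_i)=D$ (Lemma~\ref{l:deconto}) it gives $\si_2(\arg_2(D_i))=\arg_2(D)=\al$, so that $\arg_2(D_1),\arg_2(D_2)\in\si_2^{-1}(\al)=\{\al/2,\ (\al+1)/2\}$. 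It remains only to prove that these two arguments are \emph{distinct}.

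For the distinctness I would work in a neighbourhood of $K^*$, where $\psi^*$ conjugates $f$ to $z\mapsto z^2$. Following the construction in the proof of Theorem~\ref{t:decor}, fix a preimage $\Psi$-sector $S\supset D$ disjoint from $v_2$, chosen so that both complementary $\Psi$-sectors subtend a positive angle at $a^*$; then $\psi^*(S\cap U^*)$ accumulates on a closed circle arc $J\subset\uc$ with $z_\al\in J$ and $0<|J|<1$. By Theorem~\ref{t:decor} the sector $S$ has exactly two homeomorphic pullbacks $S_1$ and $S_2$ that accumulate in $K^*$, with $D_1\subset S_1$ and $D_2\subset S_2$ after relabelling; and by the conjugacy $\psi^*\circ f=(\psi^*)^2$ the sets $\psi^*(S_1\cap U^*)$ and $\psi^*(S_2\cap U^*)$ accumulate on the two components $J_1,J_2$ of $\si_2^{-1}(J)$, one each. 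Since $|J|<1$, the closed arcs $\ol{J_1}$ and $\ol{J_2}$ are disjoint; and since $\psi^*(D_i\cap U^*)\subset\psi^*(S_i\cap U^*)$ accumulates on the single point $z_{\arg_2(D_i)}$, that point lies in $\ol{J_i}$. Hence $\arg_2(D_1)\ne\arg_2(D_2)$, and, combined with the previous paragraph, $\{\arg_2(D_1),\arg_2(D_2)\}=\si_2^{-1}(\al)$.

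The main obstacle is this last (separation) step: one has to make precise the interaction between the $\psi^*$-model of a neighbourhood of $K^*$ and the quotient $\Psi$-picture used in Theorem~\ref{t:decor} — concretely, that a preimage $\Psi$-sector containing $D$ can be chosen whose $\psi^*$-image accumulates on a proper arc $J$ with $z_\al\in J$, and that its two $K^*$-accumulating pullbacks then accumulate on the two disjoint halves of $J$, so that the distinct $\si_2$-preimages $\al/2$ and $(\al+1)/2$ are forced to belong to distinct decorations. The reduction of the first two paragraphs, using Theorem~\ref{t:decor}, Lemma~\ref{l:deconto} and Lemma~\ref{l:z2}, is by contrast routine.
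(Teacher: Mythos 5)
Your first paragraph is fine: Theorem \ref{t:decor}, Lemma \ref{l:deconto} and Lemma \ref{l:z2} correctly place $\arg_2(D_1)$ and $\arg_2(D_2)$ in $\si_2^{-1}(\al)$. The gap is in the separation step, which you yourself flag as the main obstacle. You claim a preimage $\Psi$-sector $S\supset D$ can be chosen ``so that both complementary $\Psi$-sectors subtend a positive angle at $a^*$,'' and that this forces $\psi^*(S\cap U^*)$ to accumulate on a proper arc $J\subsetneqq\uc$. Neither half is justified. A $\Psi$-cut is merely a Jordan curve in the quotient sphere; it has no canonical angle at $a^*$, and even if you impose one, $\Psi$ is not smooth (the paper says so explicitly), so regularity of $\Sigma$ at $a^*$ gives no control on the two branches of $\Psi^{-1}(\Sigma)$ that approach $K^*$. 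Those branches could spiral around $K^*$, in which case $\psi^*(S\cap U^*)$ accumulates on all of $\uc$, so $J=\uc$ and the arcs $J_1,J_2$ are not separated. This is precisely the spiralling phenomenon the paper controls, for decorations only, via the Ray Assumption and Lemma \ref{l:triv3}; but $S$ is not a decoration, so you cannot invoke that. To rescue your route you would need to build $\Sigma$ from the $\Psi$-images of two external rays landing in $K^*$ (Theorem \ref{t:who-land} provides plenty), for then $\psi^*(\Sigma\cap U^*)$ lands on two points of $\uc$ and $J$ is an honest proper arc; you do not take this step, and nothing else in your argument supplies it.

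There is a shorter route, the one the paper points at with ``follows from Proposition \ref{p:pends},'' which avoids $\Psi$-sectors and Theorem \ref{t:decor} entirely. Since $D$ is non-disjoint from $E^*(\al)$, for arbitrarily small crosscuts $a$ associated with $E^*(\al)$ the set $D\cap\cont(a)$ is nonempty. The $f$-pullback of $a$ inside $U^*$ consists of two crosscuts associated with $E^*(\al/2)$ and $E^*((\al+1)/2)$, and $f$ maps each of their shadows onto $\cont(a)$, so $f^{-1}(D)$ meets both pullback shadows. As $D_3$ accumulates only in $\wt K^*$, it is bounded away from $K^*$, so for $a$ small only $D_1\cup D_2$ can meet those shadows; hence $D_1\cup D_2$ is non-disjoint from both $E^*(\al/2)$ and $E^*((\al+1)/2)$. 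By Proposition \ref{p:pends} each $D_i$ is non-disjoint from exactly one prime end, so $D_1$ and $D_2$ must realize the two prime ends separately, giving distinctness and the identification of arguments in one stroke.
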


\section{Combinatorics of renormalization}
\label{s:cubarg}
Consider a polynomial $f\in \imr\cap \Fc_\la$ with $|\la|\le 1$;
 as before denote by $f^*:U^*\to V^*$ the corresponding QL map.

\subsection{Invariant quadratic gap $\Uf$}
In \cite{bot16} we defined an invariant quadratic gap $\Uf(f)=\Uf$ associated with $f$ and $f^*$.
 When $J(f)$ is disconnected, gaps analogous to
$\Uf$ were studied in \cite{bclos} where tools from
\cite{lp96} were used.
Recall that by Lemma \ref{l:7.2} if $f$ is a cubic
polynomial with a non-repelling fixed point $a$, then there exists at
most one QL invariant filled Julia set $K^*$ containing
$a$; by Corollary \ref{c:7.2}, if $f$ has empty rational lamination, then
it has a unique non-repelling fixed point $a$ and at most one
QL invariant filled Julia set that, if it exists, must contain $a$.
Recall also that $\wt K^*$ is a component of $f^{-1}(K^*)$ different from $K^*$.

From now on we fix an immediately renormalizable
polynomial $f\in \Fc_{nr}$ and do not refer to $f$ in our notation
(we write $\Uf$ instead of $\Uf(f)$ etc).
Lemma \ref{l:inv-quad} summarizes some results of \cite[Section 7]{bot16}.
Properties of $\Uf$ listed in this lemma define $\Uf$ uniquely and
 can be taken as a definition.

\begin{lem}\label{l:inv-quad}
The gap $\Uf$ is an invariant quadratic gap of regular critical or periodic type.
If a (pre)periodic external ray $R(\alpha)$ of $f$ lands in $K^*$, then $\alpha\in\Uf$.
If a (pre)periodic external ray $R(\beta)$ of $f$ lands in $\wt K^*$, then $\beta$ is
 in the closure of the major hole of $\Uf$.
\end{lem}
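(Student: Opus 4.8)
The plan is to construct $\Uf$ out of the external combinatorics of $f$ near $K^*$ and $\wt K^*$, as in \cite[Section 7]{bot16}, and then to read off the three asserted properties; one may also simply quote \cite{bot16}.

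First I would set $A\subset\uc$ equal to the closure of the set of arguments $\al$ of (pre)periodic external rays $R(\al)$ of $f$ landing at points of $K^*$, and put $\Uf=\ch(A)$. Since $f(K^*)=K^*$, if $R(\al)$ lands at $x\in K^*$ then $f(R(\al))$ extends $R(\si_3(\al))$, which lands at $f(x)\in K^*$; (pre)periodicity is preserved, and the inclusion survives passing to the closure, so $\si_3(A)\subseteq A$ and $\Uf$ is a $\si_3$-invariant gap in the sense of Section \ref{sss:family}. Also $\Uf\cap\uc=A$ has no isolated points because the QL Julia set $K^*$ is non-degenerate and (pre)periodic landing arguments accumulate on one another. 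The essential point is that $\Uf$ is \emph{quadratic}, i.e.\ after collapsing the holes of $\Uf$ the map $\si_3|_{\bd(\Uf)}$ induces a locally injective, orientation-preserving, two-to-one self-covering of $\uc$. Global degree two is inherited from $f^*=f|_{U^*}$ being QL of degree two: using a hybrid conjugacy of $f^*$ with its quadratic representative $z^2+c$ and the B\"ottcher parametrization of $\C\sm K(z^2+c)$, one gets a monotone map $\tau$ collapsing the holes of $\Uf$ and semiconjugating $\si_3|_{\bd(\Uf)}$ with $\si_2$ (the dynamical incarnation of the map $\tau$ recalled before Theorem \ref{t:cubio}). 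Local injectivity of the induced boundary map — that the unique cubic critical point $\om_1$ inside $K^*$ does not leave a critical point of $\si_3$ on $\bd(\Uf)$ away from the major edge — is established from the separation of the decorations of $K$ rel.\ $K^*$ by prime ends of $K^*$ (Proposition \ref{p:pends}, ultimately \cite[Theorem 6.6]{mcm94}), which shows that distinct cubic arguments of rays on $K^*$ are genuinely separated and that $f$ acts injectively on the ``sector'' of $K\sm K^*$ attached along any non-major hole. I expect this verification — not anything that follows it — to be the main obstacle.

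Next, to locate the major hole I would use Lemma \ref{l:pull}: $f^{-1}(K^*)=K^*\cup\wt K^*$, where $\wt K^*$ is a full continuum disjoint from $K^*$ on which $f$ acts homeomorphically onto $K^*$. Then $\wt K^*$ lies in a single decoration (the critical decoration $D_c$), which by Proposition \ref{p:pends} is non-disjoint from exactly one prime end of $K^*$; via $\tau$ this prime end corresponds to one hole $H$ of $\Uf$, so every (pre)periodic ray of $f$ landing on $\wt K^*$ has argument in $\ol H$. Since $f:\wt K^*\to K^*$ is onto, each (pre)periodic point of $K^*$ landed on by a ray of $f$ has a preimage in $\wt K^*$ landed on by a ray of $f$, so $\si_3(\ol H)\supseteq A$; because $\bd(\Uf)$ carries only a two-to-one copy of the combinatorics of $\uc$, this forces $H$ to be the unique long hole, namely the major hole $I_\Uf$ (exactly the identification made in \cite[Section 7]{bot16}). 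This is the third bullet.

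Finally, the first bullet follows: $\Uf$ is an invariant quadratic gap with no isolated points on $\uc$, so by \cite[Lemma 3.10]{BOPT} its major $M_\Uf$ — the edge joining the endpoints of $I_\Uf$ — is of regular critical type or of periodic type. The second bullet is immediate, since a (pre)periodic ray of $f$ landing in $K^*$ has argument in $A\subseteq\Uf$ by construction of $A$. (For the uniqueness asserted in the remark after the statement, note that any invariant quadratic gap whose vertex set records the rays landing in $K^*$ and whose major hole carries the rays landing in $\wt K^*$ must coincide with $\ch(A)=\Uf$.)
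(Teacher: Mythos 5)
The paper does not actually prove Lemma~\ref{l:inv-quad}: it is explicitly presented as a summary of results from \cite[Section~7]{bot16}, with the remark that the listed properties ``can be taken as a definition'' of $\Uf$. You correctly flag the citation as the short route. Your reconstruction (define $\Uf=\ch(A)$ with $A$ the closure of (pre)periodic landing arguments into $K^*$; check $\si_3$-invariance; show $\Uf$ is quadratic; identify the major hole as the one attached to $\wt K^*$; then invoke \cite[Lemma~3.10]{BOPT}) follows the outline of the original construction, and the ingredients you draw on (Lemma~\ref{l:pull}, Proposition~\ref{p:pends}, Theorem~\ref{t:mcm}) are used in the paper without circular dependence on the present lemma. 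But as a self-contained proof it has two real gaps, and you are candid about the first of them.

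The first gap is exactly where you say: the verification that $\Uf$ is \emph{quadratic}. Your argument for this presupposes the monotone collapsing map $\tau:\bd(\Uf)\to\uc$, but $\tau$ is defined (in Section~\ref{sss:family}) only for a gap already known to be quadratic; building it directly from the hybrid conjugacy / B\"ottcher data, as you propose, is precisely the content that must be verified, and ``local injectivity $\dots$ is established from the separation of decorations by prime ends'' is an indication of why it should be true rather than a proof. The second gap is the identification of $H$ with the major hole. You claim $\si_3(\ol H)\supseteq A$ because $f:\wt K^*\to K^*$ is onto, but this only gives that each (pre)periodic point of $K^*$ has \emph{some} landing ray in $\ol H$ mapping to \emph{some} landing ray at its image, not that the push-forward covers every argument in $A$; and the subsequent inference from $\si_3(\ol H)\supseteq A$ to ``$H$ is the long hole'' needs a length/degree count that you do not supply. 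Since the authors themselves delegate all of this to \cite{bot16}, neither gap is disqualifying for the purposes of this paper, but neither is closed by your sketch.
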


Consider the map $\tau$ defined in Section \ref{ss:stru-fla}
 for any quadratic invariant gap of $\si_3$
 (this map collapses the edges of $\Uf$ and semiconjugates
$\si_3|_{\bd(\Uf)}$ with $\si_2$).
Observe that $K$-rays with arguments from $\bd(\Uf)$ do not necessarily
have principal sets contained in $K^*$. Nevertheless the map $\tau$
allows us to relate decorations of $K^*$ and their quadratic arguments
with edges and vertices of the gap $\Uf$.

\begin{lem}\label{l:cridi3}
The quadratic argument of $D_c$ is $\tau(M_{\Uf})$.
\end{lem}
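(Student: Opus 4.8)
\emph{Proof plan.}
By Definition \ref{d:qarg} and Proposition \ref{p:pends}, the quadratic argument $\arg_2(D_c)$ is the unique $\al$ for which $D_c$ is non-disjoint from the prime end $E^*(\al)$ of $\C\sm K^*$; equivalently, $\psi^*(D_c)$ accumulates on $z_\al\in\uc$. So the plan is to exhibit a single prime end of $\C\sm K^*$ that $D_c$ meets and to identify its argument with $\tau(M_{\Uf})$. Throughout I use that $\wt K^*\subset D_c$ (this is what makes $D_c$ the critical decoration) and that $D_c$ accumulates in $K^*$ by Lemma \ref{l:triv}; let $C=\ol{D_c}\cap K^*$, a continuum, be the ``root'' of $D_c$ in $K^*$.

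Next I recall how $\Uf$ and $\tau$ record the renormalization (cf.\ \cite{bot16}, Lemma \ref{l:inv-quad}): a (pre)periodic $K$-ray $R(\al)$ landing at a point $z\in K^*$ has a tail inside $U^*$ landing at $z$, so the $\psi^*$-image of that tail is a curve in $\C\sm\cdisk$ landing at a point of $\uc$ which, by the way $\Uf$ and $\tau$ are set up, has argument $\tau(\al)$; in particular such a ray is non-disjoint from $E^*(\tau(\al))$, and by Lemma \ref{l:inv-quad} one has $\al\in\Uf$. On the other hand, Lemma \ref{l:inv-quad} says every (pre)periodic $K$-ray landing in $\wt K^*$ has argument in the closed major hole $\ol{I_{\Uf}}$, whose endpoints are exactly the endpoints $a,b$ of the major edge $M_{\Uf}$. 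From this one extracts that the $K$-rays with arguments $a$ and $b$ (in the regular critical case, where $a,b$ need not be (pre)periodic, one instead uses (pre)periodic $K$-rays landing on $K^*$ with arguments tending to $a$, resp.\ to $b$, from inside $\bd(\Uf)$) land in $C$ and together bound the ``wake'' of $\wt K^*$; and since $\tau$ collapses the edge $M_{\Uf}$ to a point, all of these rays are non-disjoint from one and the same prime end $E^*(\tau(M_{\Uf}))$.

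It then remains to place $D_c$ inside that wake. The rays just used are external to $K$, hence disjoint from the connected set $D_c\subset K$, while $D_c\supset\wt K^*$; therefore $D_c$ lies entirely in the closure of the region they bound and accumulates, within that region, at $C$. Consequently the crosscuts defining $E^*(\tau(M_{\Uf}))$, which shrink onto a piece of $C$, eventually have shadows meeting $D_c$, i.e.\ $D_c$ is non-disjoint from $E^*(\tau(M_{\Uf}))$. By Proposition \ref{p:pends} this is the only prime end met by $D_c$, so $\arg_2(D_c)=\tau(M_{\Uf})$, as claimed.

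I expect the crux to be the ``wake'' assertions in the second and third paragraphs: that the two extreme $K$-rays (or the approximating sequences of (pre)periodic ones) attached to the endpoints of $M_{\Uf}$ really do land in the root $C$ and are read off from $K^*$ at the common quadratic argument $\tau(M_{\Uf})$, and that they genuinely confine $D_c$. This uses, beyond Lemma \ref{l:inv-quad}, the defining property of $\Uf$ from \cite{bot16} that $M_{\Uf}$ is precisely the edge of $\Uf$ facing $\wt K^*$, and it requires treating the regular critical and periodic types of $\Uf$ uniformly — in the former case by the approximation argument above together with the stability of the accumulation behavior of the $\psi^*$-images of the relevant $K$-rays.
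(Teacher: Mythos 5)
Your approach differs from the paper's and has circularity problems. The paper argues by contradiction: assuming $\arg_2(D_c)=\tau(y)$ for some edge/vertex $y\ne M_{\Uf}$ of $\Uf$, it chooses periodic angles $\al',\be'\in\Uf$ so that the arc $I=(\al',\be')$ contains $y$ but misses the endpoints of $M_{\Uf}$, then observes that the component $W$ of $\C\sm[R(\al')\cup R(\be')\cup K^*]$ containing rays with arguments in $I$ is disjoint from $\wt K^*$ (because by Lemma~\ref{l:inv-quad} rays landing in $\wt K^*$ have arguments in the closed major hole, outside $I$). Since $D_c\supset\wt K^*$ would have to lie in $W$, this is a contradiction. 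The auxiliary angles $\al',\be'$ can be chosen as arguments of $K$-rays landing at periodic points of $K^*$, so no landing results beyond Lemma~\ref{l:inv-quad} are needed.

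Your proposal instead works directly with the rays $R(\al_\Uf)$, $R(\be_\Uf)$ attached to the endpoints of the major edge. Two of its crucial claims are not available at this stage of the paper and create circularity. First, the step ``the $\psi^*$-image of that tail... has argument $\tau(\al)$'' is precisely the identity $\arg_2(R(\ga))=\tau(\ga)$; but in the paper this is \emph{derived from} Lemma~\ref{l:cridi3} (look at the paragraph between Lemma~\ref{l:cridi3} and Theorem~\ref{t:who-land}), so you cannot use it in the lemma's proof. Second, the assertion that $R(\al_\Uf)$, $R(\be_\Uf)$ land on $K^*$ (let alone on the root continuum $C$) is essentially Lemma~\ref{l:al-land}, which is proved in Section~6 by a nontrivial hyperbolic-contraction argument and which itself relies on Theorem~\ref{t:who-land}, hence on Lemma~\ref{l:cridi3}. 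The approximation device you suggest in the regular critical case has the same problem: knowing that (pre)periodic rays with arguments in $\Uf$ accumulating at $\al_\Uf$ actually land in $K^*$ is the content of Theorem~\ref{t:who-land}. So the proposal, as written, would need to be restructured along the lines of the paper's contradiction argument, which carefully avoids invoking any landing or ray-argument-identification results that are logically downstream of the lemma.
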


\begin{proof}
If the quadratic argument of $D_c$ is not $\tau(M_{\Uf})$, then there is an edge/vertex $y$ of $\Uf$
such that $\tau(y)$ is the quadratic argument of $D_c$ and
we can find periodic angles $\al'$, $\be'\in \Uf$ such that the circle arc $I=(\al', \be')$
 contains $y$ but does not contain the endpoints of $M_{\Uf}$.
For $K$-rays $R(\al')$, $R(\be')$ with arguments $\al'$, $\be'$, consider the
component $W$ of $\C\sm [R(\al')\cup R(\be')\cup K^*]$ containing
$K$-rays with arguments from $I$.
By Lemma \ref{l:inv-quad}, all periodic $K$-rays landing in $\wt K^*$ lie in the closure of the major hole of $\Uf$.
Hence $\widetilde K^*$ is disjoint from $W$.
On the other hand, by definition of the quadratic argument,
 $D_c\supset\wt K^*$ must be contained in $W$, a contradiction.
\end{proof}

Similarly to decorations, one can define quadratic arguments of $K$-rays landing in $K^*$.
It follows, similarly to Lemma \ref{l:z2}, that
$$
\arg_2(R(3\ga))=2\arg_2(R(\ga))
$$
for any $\ga$.
Also, as follows from Lemma \ref{l:cridi3}, the quadratic arguments of $R(\al_\Uf)$ and $R(\be_\Uf)$ are both equal to $\tau(\al_\Uf)=\tau(\be_\Uf)$,
 where $(\al_\Uf,\be_\Uf)$ is the major hole of $\Uf$.
On the other hand, there is only one monotone map from $\uc$ to $\uc$ collapsing $M_{\Uf}$ to a point
 and semi-conjugating $\si_3$ with $\si_2$ on $\Uf$; this map is $\tau$.
It follows that $\arg_2(R(\ga))=\tau(\ga)$ for any $K$-ray $R(\ga)$ landing in $K^*$.

\begin{thm}\label{t:who-land}
If $\al\in \Uf$ is a (pre)periodic angle that never maps to an
endpoint of the major $M_{\Uf}$ of $\Uf$, then the $K$-ray $R(\al)$ with
argument $\al$ lands on a point of $K^*$.
\end{thm}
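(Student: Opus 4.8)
The plan is to show that the landing point of $R(\al)$ lies in $K^*$, arguing by contradiction. Since $\al$ is (pre)periodic, $R(\al)$ lands at a (pre)periodic point $x$, necessarily repelling or parabolic; suppose $x\notin K^*$, so that $x$ lies in a unique decoration of $K$ rel.\ $K^*$.

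The first step is to reduce to the situation where $\al$, and hence $x$, is periodic and no forward iterate of $x$ lies in $K^*$. Indeed, if $f^m(x)\in K^*$ for some $m\ge 1$, take $m$ minimal; then $f^{m-1}(x)$ belongs to a component of $f^{-1}(K^*)$ different from $K^*$, i.e.\ to $\wt K^*$, and $R(\si_3^{m-1}\al)=f^{m-1}(R(\al))$ lands in $\wt K^*$. By Lemma~\ref{l:inv-quad} the angle $\si_3^{m-1}\al$ lies in the closure of the major hole $I_\Uf$; since $\Uf$ is invariant, $\si_3^{m-1}\al\in\Uf\cap\ol{I_\Uf}$, a set which consists exactly of the two endpoints of $M_{\Uf}$ --- contradicting the hypothesis. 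The same mechanism disposes of strictly preperiodic $\al$ by induction on the preperiod: then $\si_3\al\in\Uf$ still never maps to an endpoint of $M_{\Uf}$, so by the inductive hypothesis $R(\si_3\al)$ lands at some $z\in K^*$; as $f|_{K^*}$ has degree two and $f^{-1}(K^*)=K^*\sqcup\wt K^*$, all but one of the $f$-preimages of $z$ lie in $K^*$, and if $R(\al)$ landed at the remaining one, which lies in $\wt K^*$, we would again be in the case just treated.

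So assume $x$ is periodic of period $p$ with no forward iterate in $K^*$, and let $D_i=D(f^i(x))$ be the decorations visited by its orbit. First, none of the $D_i$ is the critical decoration $D_c$: otherwise $R(\si_3^i\al)$ lands at a point of $D_c$, so its quadratic argument $\tau(\si_3^i\al)$ equals $\arg_2(D_c)=\tau(M_{\Uf})$ by Lemma~\ref{l:cridi3}, and since $\tau$ collapses each edge of $\Uf$ to a point but is otherwise injective, $\si_3^i\al$ would be an endpoint of $M_{\Uf}$, a contradiction. Hence every $D_i$ is non-critical, so by Theorem~\ref{t:decor} each $f\colon D_i\to D_{i+1}$ is a homeomorphism; in particular $f^p$ restricts to a homeomorphism of $D_0$ fixing $x$, and by Lemma~\ref{l:z2} the quadratic arguments $\theta_i=\arg_2(D_i)$ satisfy $\theta_{i+1}=\si_2(\theta_i)$, so that $\theta_0$ is $\si_2$-periodic.

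The final step --- which I expect to be the main obstacle --- is to turn this configuration into a contradiction, by combining the homeomorphism $f^p|_{D_0}$ with the topology of $\ol{D_0}$. By Proposition~\ref{p:pends}, $D_0$ accumulates on exactly one prime end $E^*(\theta_0)$ of $K^*$; since $\theta_0$ is $\si_2$-periodic, the $K^*$-ray $R^*(\theta_0)$ is periodic and lands at a repelling or parabolic periodic point $y_0\in K^*$ whose prime end has single-point impression, whence $\ol{D_0}\cap K^*=\{y_0\}$ and $\ol{D_0}=D_0\cup\{y_0\}$ is a continuum meeting $K^*$ only at the $f^p$-fixed point $y_0$. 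Thus $f^p$ is a self-homeomorphism of the continuum $\ol{D_0}$ fixing $y_0$ and the repelling/parabolic point $x$ in its ``interior''. Using Theorem~\ref{t:mcm} to separate $D_0$ from $D_c$ by a preimage $\Psi$-sector and examining $D_0$ near $y_0$, one sees that $D_0$ occupies one of the finitely many sectors cut out at $y_0$ by the external rays of $f$ landing there (all of which have arguments in $\Uf$ by Lemma~\ref{l:inv-quad}); transporting these arguments through $\tau$ and using the injectivity of $\tau$ modulo edges together with the $\si_2$-periodicity of $\theta_0$, one concludes that $\al$, or one of its $\si_3$-iterates, must be an endpoint of $M_{\Uf}$ --- the desired contradiction. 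In effect, the crux is to prove that a non-critical periodic cycle of decorations cannot carry a periodic point which is a landing point of a $K$-ray with argument in $\Uf$, and it is precisely here that the full decoration machinery (Theorems~\ref{t:kcollapse}, \ref{t:decor}, \ref{t:mcm} and Proposition~\ref{p:pends}) is needed.
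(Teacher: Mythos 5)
Your approach is genuinely different from the paper's and, as you anticipate, it does not close. The paper's proof is a short direct argument with no contradiction and no decoration dynamics: set $\ta=\tau(\al)$, let $x\in K^*$ be the landing point of the $K^*$-ray $R^*(\ta)$, invoke the Main Theorem of \cite{bot21} to produce a $K$-ray $R(\be)$ landing at $x$ and homotopic to $R^*(\ta)$ rel.\ $K^*$, note $\arg_2(R(\be))=\ta$ by the homotopy and $\arg_2(R(\be))=\tau(\be)$ by the identity established just before the theorem; hence $\tau(\al)=\tau(\be)$, and the hypothesis on $\al$ rules out the only way to have $\al\ne\be$ (both being endpoints of one edge of $\Uf$, which eventually maps to $M_{\Uf}$), so $\al=\be$ and $R(\al)$ lands at $x$.

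In your write-up there are two concrete gaps. First, the assertion that $R(\si_3^i\al)$ landing in $D_c$ forces $\tau(\si_3^i\al)=\arg_2(D_c)$ is not supported: the identity $\arg_2(R(\ga))=\tau(\ga)$ is established in the paper only for rays landing in $K^*$, and the quadratic argument of a $K$-ray is defined via accumulation on a prime end of $K^*$ --- a ray landing in a decoration need not accumulate on $K^*$, so you have neither a definition nor an equality to appeal to; trying to patch this with sectors $S^\circ(\al',\be')$, $\al',\be'\in\Uf$, requires already knowing those rays land in $K^*$, which is essentially Theorem~\ref{t:who-land} itself. Second, the final step assumes that the periodic $K^*$-ray $R^*(\theta_0)$ has a single-point prime-end impression, whence $\ol{D_0}\cap K^*=\{y_0\}$. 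This is unjustified: $K^*$ is the filled Julia set of a quadratic-like map which may have a Cremer or Siegel fixed point, and such Julia sets can have non-degenerate prime-end impressions even at repelling periodic points; nor does $D_0$ accumulating on a single prime end immediately bound $\ol{D_0}\cap K^*$ by the impression. Without this, the topological argument on $\ol{D_0}$ does not begin --- you flag this step as the main obstacle, correctly, but the sketch given does not overcome it, and the paper's route via \cite{bot21} avoids the entire difficulty.
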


\begin{proof}
  Set $\ta=\tau(\al)$, and let $R^*(\ta)$ be the $K^*$-ray with argument $\ta$.
It lands on some point $x\in K^*$; we claim that $R(\al)$ lands on the same point.
Let $\be$ be the argument of a $K$-ray landing on $x$ such that the ray is homotopic to $R^*(\ta)$ rel. $K^*$
 (it follows from the Main Theorem of \cite{bot21} that such $\be$ exists).
By definition of the quadratic argument, $\arg_2(R(\be))=\ta$.
On the other hand, by the above, $\arg_2(R(\be))=\tau(\be)$; it follows that $\al$ and $\be$ have the same $\tau$-images.
In particular, $\al=\be\in \Uf$ unless both $\al$ and $\be$ are the endpoints of an edge of $\Uf$.
However, the latter is impossible by the assumption.
\end{proof}

\subsection{Sectors}
As before, suppose that $f\in\imr\cap\Fc_\la$ with $|\la|\le 1$ is fixed.
Consider a pair of external rays $R(\al)$, $R(\be)$ landing in $K^*$.
The set $\Sigma(\al,\be)=K^*\cup R(\al)\cup R(\be)$ divides the plane into two components, one
of which contains all external rays with arguments in $(\al,\be)$ and the other contains all external rays with arguments in $(\be,\al)$.
To formally justify this claim, collapse $K^*$ to a point (i.e., consider the equivalence relation $\sim$
on $\ol\C$, whose classes are $K^*$ and single points in $\ol\C\sm K^*$).
By Moore's theorem, the quotient space $\ol\C/\sim$ is homeomorphic to the sphere.
The image of $\Sigma(\al, \be)$ under the quotient projection, together with the image of the point at infinity, form a Jordan curve.
The statement now follows from the Jordan curve theorem.
Let $S^\circ(\al,\be)$ be the component of $\C\sm\Sigma(\al,\be)$ containing all external rays with arguments in $(\al,\be)$.
Observe that $S^\circ(\al,\be)$ is defined only if the rays $R(\al)$, $R(\be)$ both land in $K^*$.
The sets $S^\circ(\al,\be)$ will be called \emph{open sectors}, and the sets $\Sigma(\al,\be)$ will be called \emph{cuts}.
Images of sectors contain $K^*$ if and only if sectors contain $\wt K^*$.

An open sector $S^\circ(\al,\be)$ is associated with its \emph{argument arc} $(\al,\be)\subset\R/\Z$
that consists of arguments of all rays included in $S^\circ(\al,\be)$.
Note, that this sector does not have to coincide with the union of those rays as
 open sectors may contain decorations. More generally, consider a subset $T\subset\C$.
The set $T$ is said to be \emph{$(f)$-radial} if any ray intersecting $T$ lies in $T$.
For a radial set $T$ we can define the \emph{argument set} $\arg(T)$ of $T$ as the set of all $\ga\in\R/\Z$ with $R(\ga)\subset T$.
Every open sector is a radial set, whose argument set is an open arc.
It is clear that, for any radial set $T$, we have
$
\arg(f(T))=\si_3(\arg(T))$ and $\arg(f^{-1}(T))=\si_3^{-1}(\arg(T))$.

\begin{lem}
  \label{l:pb-opsec}
 Let $S^\circ$ be an open sector and let $T^\circ$ be an $f$-pullback of $S^\circ$.
Then $\arg(T^\circ)$ is the union of $m$
components of $\si_3^{-1}(\arg(S^\circ))$ for some $m\in\{1,2\}$.
The number of critical points in $T^\circ$ equals $m-1$.
If $\om_2\notin T^\circ$ and the closure of $T^\circ$ intersects $K^*$, then $T^\circ$ is an open sector mapping 1-1 onto $S^\circ$.
Any pullback of $S^\circ$ is disjoint from $K^*$.
\end{lem}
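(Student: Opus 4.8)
The plan is to analyze the proper holomorphic restriction $f\colon T^\circ\to S^\circ$. First I would observe that $T^\circ$ is radial: if a ray $R(\ga)$ meets $T^\circ$, then $R(3\ga)=f(R(\ga))$ meets $S^\circ$, hence $R(3\ga)\subset S^\circ$ because $S^\circ$ is radial, so $R(\ga)\subset f^{-1}(S^\circ)$ and, being connected, $R(\ga)\subset T^\circ$. Thus $\arg(T^\circ)\subset\si_3^{-1}(\arg(S^\circ))$; since each of the three arcs composing $\si_3^{-1}(\arg(S^\circ))$ is connected and the union of all rays with arguments in such an arc is connected, $\arg(T^\circ)$ is the union of $m$ of these arcs for some $m\ge0$. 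In fact $m\ge1$: $T^\circ$ surjects onto $S^\circ$ (it is open and relatively closed in $f^{-1}(S^\circ)$, and $f$ is open and proper), $S^\circ$ contains points lying on rays, and all $f$-preimages of such a point again lie on rays. Counting the preimages in $T^\circ$ of a non-critical point of $S^\circ$ lying on a ray (there is exactly one preimage on each of the $m$ rays over it) shows that $f\colon T^\circ\to S^\circ$ is proper of degree exactly $m$.

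Next I would check that $S^\circ$ and $T^\circ$ are simply connected. For $S^\circ$ this is noted in the text: collapsing $K^*$ turns $\Sigma(\al,\be)$ (with the point at infinity) into a Jordan curve and $S^\circ$ into one of its complementary Jordan domains, and the collapse does not affect $S^\circ$. For $T^\circ$: a bounded component $H$ of $\C\sm T^\circ$ would satisfy $f(\partial H)\subset\partial S^\circ\subset\Sigma(\al,\be)$, so $f(H)$ would be a nonempty bounded open set whose boundary lies in $\Sigma(\al,\be)$, hence a union of components of $\C\sm\Sigma(\al,\be)$; but both of these components are unbounded, a contradiction. Uniformizing $T^\circ$ and $S^\circ$ by disks via Riemann maps, $f$ becomes a finite Blaschke product of degree $m$, which has exactly $m-1$ critical points (with multiplicity) in $\disk$; since Riemann maps preserve critical points, $f$ has exactly $m-1$ critical points in $T^\circ$. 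The critical set of $f$ is contained in $\{\om_1,\om_2\}$, and $\om_1\notin T^\circ$ because $f(\om_1)\in K^*\subset\C\sm S^\circ$; hence the only critical point that can lie in $T^\circ$ is $\om_2$, which forces $m\in\{1,2\}$ and makes the number of critical points in $T^\circ$ equal to $m-1$. The same observation $f(K^*)=K^*\subset\C\sm S^\circ$ yields $f(T^\circ\cap K^*)\subset S^\circ\cap K^*=\0$, so every pullback of $S^\circ$ is disjoint from $K^*$.

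Finally, assume $\om_2\notin T^\circ$ and $\ol{T^\circ}\cap K^*\ne\0$. Then $m=1$, so $f\colon T^\circ\to S^\circ$ is a homeomorphism and $\arg(T^\circ)$ is a single arc $(\al',\be')$, a component of $\si_3^{-1}((\al,\be))$, hence of length less than $1/3$. Here I would use the structure of $f^{-1}(\Sigma(\al,\be))=(K^*\cup\wt K^*)\cup f^{-1}(R(\al))\cup f^{-1}(R(\be))$: each of the six preimage rays lands at a point that maps to the landing point of $R(\al)$ or of $R(\be)$, hence at a point of $K^*\cup\wt K^*$; therefore $f^{-1}(\Sigma(\al,\be))$ has exactly two connected components, $\Sigma_{K^*}$, consisting of $K^*$ together with the preimage rays landing in $K^*$, and $\Sigma_{\wt K^*}$, defined analogously with $\wt K^*$. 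Since $T^\circ$ is simply connected with a single end at infinity (its argument arc has length below $1/3$), the set $\C\sm T^\circ$ is connected, so $\partial T^\circ$ is connected; as $\partial T^\circ\subset f^{-1}(\partial S^\circ)\subset f^{-1}(\Sigma(\al,\be))$, it is contained in $\Sigma_{K^*}$ or in $\Sigma_{\wt K^*}$. But $T^\circ\cap K^*=\0$, so the hypothesis $\ol{T^\circ}\cap K^*\ne\0$ gives $\partial T^\circ\cap K^*\ne\0$, ruling out $\Sigma_{\wt K^*}$; hence $\partial T^\circ\subset\Sigma_{K^*}$. The boundary rays $R(\al')$ and $R(\be')$ belong to $\partial T^\circ$, hence to $\Sigma_{K^*}$, i.e. they land in $K^*$, so the open sector $S^\circ(\al',\be')$ is defined. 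Then $T^\circ\subset S^\circ(\al',\be')$ (it is connected, disjoint from $K^*\cup R(\al')\cup R(\be')$, and contains the rays with arguments in $(\al',\be')$); and $\partial T^\circ\subset\Sigma_{K^*}$ is disjoint from $S^\circ(\al',\be')$ because, thanks to the length bound, none of the four remaining preimage rays has argument in $(\al',\be')$; thus $T^\circ$ is closed and open in the connected set $S^\circ(\al',\be')$, hence equals it, and $T^\circ$ maps one-to-one onto $S^\circ$ as already observed.

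I expect the third paragraph to be the main obstacle: organizing $f^{-1}(\Sigma(\al,\be))$ into its two components, justifying the connectedness of $\partial T^\circ$, and carrying out the identification $T^\circ=S^\circ(\al',\be')$ are the delicate points, whereas the radiality bookkeeping, the degree count, and the Riemann--Hurwitz input are routine. An alternative for this step would be to pass first to the collapsed dynamics $f_\Psi$ of the text, where $K^*$ and $\wt K^*$ become the points $a^*$ and $\wt a^*$; there a pullback of a sector is attached either to $a^*$ or to $\wt a^*$, and the hypothesis $\ol{T^\circ}\cap K^*\ne\0$ selects the former.
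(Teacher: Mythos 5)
Your proof is correct and follows essentially the same route as the paper's: radiality of $T^\circ$ and the count of components of $\arg(T^\circ)$ give the degree, Riemann--Hurwitz gives the critical-point count, and the third claim is settled by observing that $\bd(T^\circ)$ is connected and contained in $f^{-1}(\Sigma(\al,\be))$, which forces both boundary rays to land in the same one of $K^*$, $\wt K^*$, after which the hypothesis $\ol{T^\circ}\cap K^*\ne\0$ selects $K^*$. The paper states most of this more tersely (it does not spell out the simple connectedness of $T^\circ$ needed for the Riemann--Hurwitz step, does not explicitly decompose $f^{-1}(\Sigma(\al,\be))$ into the two pieces $\Sigma_{K^*}$ and $\Sigma_{\wt K^*}$, and does not separately argue the last sentence of the lemma), so your write-up supplies useful detail; the one place that is a bit loose is the bounded-component argument for simple connectedness of $T^\circ$, where passing to the topological hull of a hole (or just applying Riemann--Hurwitz together with the a priori bound $\chi(T^\circ)\le 1$) would make the step airtight, but the conclusion and the overall structure match the paper.
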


\begin{proof}
The first claim ($\arg(T^\circ)$ is a union of components of $\si_3^{-1}(\arg(S^\circ))$) is immediate.
Since $f:T^\circ\to S^\circ$ is proper, then this map has a well-defined degree
equal to the number of components in $\arg(T^\circ)$. By the Riemann--Hurwitz formula,
the degree equals the number of critical points in $T^\circ$ plus one.
Thus the first two claims of the lemma follow.

Let us prove the third claim. The only critical point that can lie in $T^\circ$ is $\om_2$.
Since we assume that $\om_2\notin T^\circ$, then $\arg(T^\circ)$ has only one component. Let $\arg(T^\circ)=(\al,\be)$.
Then $T^\circ$ is bounded by $R(\al)\cup R(\be)$ and a part of $K^*\cup \wt K^*$, and $\bd(T^\circ)$ is connected.
If both $R(\al)$ and $R(\be)$ land in $K^*$, then, by definition, $T^\circ$ coincides with the open sector $S^\circ(\al,\be)$.
If both $R(\al)$ and $R(\be)$ land in $\wt K^*$, then $T^\circ$ is disjoint from $K^*$
as its image $S^\circ$ does not contain $K^*$; this implies that $\ol{T^\circ}$ is disjoint with $K^*$, a contradiction
with our assumptions. Since $\bd(T^\circ)$ is connected, this exhausts all possibilities and completes the proof of the lemma.
\end{proof}

Lemma \ref{l:img-sec} deals with the images of sectors.

\begin{lem}
  \label{l:img-sec}
  Consider an open sector $S^\circ(\al,\be)$, whose argument arc is mapped one-to-one under $\si_3$.
Then $f(S^\circ(\al,\be))=S^\circ(3\al,3\be)$.
Moreover, $S^\circ(\al,\be)$ maps one-to-one onto $S^\circ(3\al,3\be)$.
\end{lem}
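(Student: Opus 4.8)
The plan is to identify $S^\circ(\al,\be)$ itself as the (unique) pullback of $S^\circ(3\al,3\be)$ that carries the rays with arguments in $(\al,\be)$, and then to read off the statement from Lemma~\ref{l:pb-opsec}.

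First I would collect the preliminary facts. Since $K^*$ is forward invariant and $f$ carries $R(\al)$ onto $R(3\al)$ and $R(\be)$ onto $R(3\be)$, the rays $R(3\al)$ and $R(3\be)$ land in $K^*$; hence the open sector $S^\circ(3\al,3\be)$ is defined and $f(\Sigma(\al,\be))=\Sigma(3\al,3\be)$, so $\Sigma(\al,\be)\subseteq f^{-1}(\Sigma(3\al,3\be))$. Since the argument arc $(\al,\be)$ is mapped one-to-one by $\si_3$, its length is less than $1/3$ (if it were exactly $1/3$ then $3\al=3\be$ and the target sector would not be defined, so this case is excluded by the hypothesis); consequently $\si_3^{-1}((3\al,3\be))$ is the disjoint union of the three arcs $(\al,\be)$, $(\al+1/3,\be+1/3)$ and $(\al+2/3,\be+2/3)$, the last two of which lie in $(\be,\al)$ and so are disjoint from $(\al,\be)$.

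Next, let $T^\circ$ be the component of $f^{-1}(S^\circ(3\al,3\be))$ containing all rays $R(\ga)$ with $\ga\in(\al,\be)$; such a $T^\circ$ exists (this fan of rays is connected and is contained in $f^{-1}(S^\circ(3\al,3\be))$) and is a pullback of $S^\circ(3\al,3\be)$. Because $\Sigma(\al,\be)\subseteq f^{-1}(\Sigma(3\al,3\be))$, every component of $\C\sm f^{-1}(\Sigma(3\al,3\be))$ is contained in a component of $\C\sm\Sigma(\al,\be)$; since $T^\circ$ meets the rays with arguments in $(\al,\be)$, this forces $T^\circ\subseteq S^\circ(\al,\be)$, hence $\arg(T^\circ)\subseteq(\al,\be)$. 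On the other hand, by Lemma~\ref{l:pb-opsec}, $\arg(T^\circ)$ is a union of some of the three arcs listed above; the only one of them contained in $(\al,\be)$ is $(\al,\be)$, so $\arg(T^\circ)=(\al,\be)$, i.e.\ $m=1$. By the same lemma $T^\circ$ then contains no critical point of $f$; in particular $\om_2\notin T^\circ$. Finally, $\ol{T^\circ}\supseteq R(\al)$ and the landing point of $R(\al)$ lies in $K^*$, so $\ol{T^\circ}$ meets $K^*$.

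Now I would apply the third assertion of Lemma~\ref{l:pb-opsec}: since $\om_2\notin T^\circ$ and $\ol{T^\circ}$ meets $K^*$, the set $T^\circ$ is an open sector mapping one-to-one onto $S^\circ(3\al,3\be)$. An open sector whose argument arc equals $(\al,\be)$ is $S^\circ(\al,\be)$ (recall $R(\al)$ and $R(\be)$ land in $K^*$ by hypothesis), so $T^\circ=S^\circ(\al,\be)$, which yields both conclusions of the lemma. I expect the only real point to be the inclusion $T^\circ\subseteq S^\circ(\al,\be)$: it pins $\arg(T^\circ)$ down to $(\al,\be)$ and thereby rules out the a priori possible two-to-one pullback (the one that would contain $\om_2$); once that is settled the rest is bookkeeping with Lemma~\ref{l:pb-opsec}.
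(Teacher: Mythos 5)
Your proof is correct and follows essentially the same route as the paper's: identify the pullback $T^\circ$ of $S^\circ(3\al,3\be)$ that contains the rays with arguments in $(\al,\be)$, show it lies inside $S^\circ(\al,\be)$, pin down $\arg(T^\circ)=(\al,\beta)$ so that $m=1$ and $\om_2\notin T^\circ$, and then invoke the third assertion of Lemma~\ref{l:pb-opsec}. The only difference is cosmetic: the paper gets $T^\circ\subset S^\circ(\al,\be)$ directly from the facts that $R(\al)$, $R(\be)$ lie on $\bd(T^\circ)$ and that $T^\circ$ is disjoint from $K^*$, whereas you pass through the inclusion $\Sigma(\al,\be)\subseteq f^{-1}(\Sigma(3\al,3\be))$ and the resulting refinement of complementary components; both routes are valid, and yours also spells out the preliminary checks (that $R(3\al),R(3\be)$ land in $K^*$ and that the three preimage arcs are disjoint) that the paper takes for granted.
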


\begin{proof}
  Let $T^\circ$ be the $f$-pullback of $S^\circ(3\al,3\be)$ that includes rays with arguments in $(\al,\be)$.
Clearly, the rays $R(\al)$, $R(\be)$ are on the boundary of $T^\circ$.
Since these rays land in $K^*$ and $T^\circ\cap K^*=\0$, then $T^\circ\subset S^\circ(\al,\be)$.
Since by the assumptions of the lemma $\om_2\notin T^\circ$, then
$T^\circ=S^\circ(\al,\be)$ by Lemma \ref{l:pb-opsec}, as desired.
\end{proof}

\section{Backward stability}
In this section we study backward stability of decorations and show
that under certain circumstances decorations shrink as we pull them back.

\begin{lem}
  \label{l:sn}
  Fix $q\in (0,1)$ and $b>0$.
  Consider a sequence of positive numbers $s_n$ such that either $s_{n+1}=qs_n$ or $s_{n+1}\le 2qs_n+b$.
In the latter case call $n$ a \emph{bad} subscript.
Suppose that the distance between consecutive bad subscripts tends to infinity and denote bad subscripts by $n_i$.
Then $s_{n_i}\to 0$ as $i\to \infty$.
\end{lem}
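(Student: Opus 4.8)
The plan is to analyze the sequence along the bad subscripts and show that the multiplicative decay between consecutive bad times overwhelms the additive term $b$. First I would set up notation: let $n_i$ denote the $i$-th bad subscript, and let $g_i = n_{i+1} - n_i$ be the gap, which by hypothesis satisfies $g_i \to \infty$. Between $n_i$ (exclusive) and $n_{i+1}$ (exclusive) every step is of the first type, so $s_{n_{i+1}-1} = q^{\,g_i - 1} s_{n_i}$; then applying the bad-step inequality at $n_{i+1}-1$ gives
\begin{equation*}
s_{n_{i+1}} \le 2q\, s_{n_{i+1}-1} + b = 2q^{\,g_i} s_{n_i} + b.
\end{equation*}
This is the key recursion: writing $t_i = s_{n_i}$, we have $t_{i+1} \le c_i t_i + b$ with $c_i = 2q^{\,g_i}$, and $c_i \to 0$ since $g_i \to \infty$.

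Next I would show that a sequence satisfying $t_{i+1} \le c_i t_i + b$ with $c_i \to 0$ and $t_i \ge 0$ must tend to $0$. Pick $N$ so large that $c_i \le 1/2$ for all $i \ge N$. Then for $i \ge N$ we get $t_{i+1} \le \tfrac12 t_i + b$, which iterates to $t_{N+k} \le 2^{-k} t_N + 2b$, so $\limsup_i t_i \le 2b$. This already bounds the tail but does not give convergence to $0$, so I would then bootstrap: for any $\eps > 0$ choose $M \ge N$ so that $c_i \le \eps$ for all $i \ge M$ (possible since $c_i \to 0$); combined with the a priori bound $\limsup t_i \le 2b$, we get $t_{i+1} \le \eps \cdot 3b + b$ eventually — hmm, this still leaves a floor of order $b$. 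The clean fix is: choose $M$ so that $c_i t \le \eps$ whenever $t \le 2b+1$ and $i \ge M$; then once $t_i \le 2b+1$ we have $t_{i+1} \le \eps + b$... this is still not converging to $0$. Let me instead argue directly: since $\limsup t_i \le 2b$ is a crude bound, iterate the whole argument — replacing $b$ by... no. The correct observation is that $b$ is \emph{not} negligible in the recursion as stated, so I suspect the intended reading is that the ``latter case'' inequality is $s_{n+1} \le 2q s_n + b s_n$ or that $b$ scales, OR that one only needs $s_{n_i}$ bounded. Re-reading: the conclusion is merely $s_{n_i} \to 0$, and indeed with a genuine additive constant $b$ this is false (take $g_i$ increasing but $t_i$ near $2b$). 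So the honest proof must use that between bad times the sequence is multiplied by $q$ \emph{starting from} $t_i$, and crucially that $b$ here will in the application be $b = \dia$ of a fixed small set — but as a pure lemma about numbers, the statement needs $b$ to be absorbed.

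Therefore the real argument I would give: observe $s_{n+1} \le 2q s_n + b$ can be rewritten, after the long run of $q$-steps, and the point is that the \emph{factor} $2q^{g_i} \to 0$ so eventually $2q^{g_i} < q$, whence from that index on \emph{all} steps (bad and good) contract by at least $q$; hence $s_n \to 0$ outright, in particular $s_{n_i}\to 0$. Concretely: fix $i_0$ with $2q^{g_i} \le q$ for all $i \ge i_0$ and also small enough that $b \le (1-q)\cdot s_{n_{i_0}}\cdot q^{??}$ — no, simplest: for $i \ge i_0$, $s_{n_{i+1}} \le q s_{n_i} + b$... still additive. The genuinely correct plan, and the one I would commit to in the paper: show $s_{n_i}$ is bounded (by the $2^{-k}$ telescoping above, $\limsup s_{n_i} \le 2b$), and then note this is all that is actually needed — but since the statement demands $\to 0$, I conclude the additive $b$ must be intended multiplicatively, and I would prove: \textbf{under the reading $s_{n+1}\le 2qs_n$ at bad steps with an arbitrarily small additive error that can be taken $\le \delta_i$ with $\sum \delta_i < \infty$}, the recursion $t_{i+1}\le c_i t_i + \delta_i$ with $c_i\to 0$, $\sum\delta_i<\infty$ forces $t_i\to 0$ by the same pick-$N$-so-$c_i\le 1/2$ argument followed by $t_{N+k} \le 2^{-k}t_N + \sum_{j\ge N}\delta_j$ and letting $N\to\infty$. \textbf{The main obstacle} is exactly this: pinning down the precise form of the ``bad'' inequality so that the additive term is summable (or shrinking), since with a fixed additive constant $b>0$ the conclusion $s_{n_i}\to 0$ genuinely fails; I expect the paper's application supplies a shrinking error because the set being pulled back already has small diameter, and the clean lemma statement should reflect that.
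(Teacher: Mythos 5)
Your computation places the bad step at the wrong end of the block $[n_i,n_{i+1}]$, and this is precisely what makes the additive term $b$ seem non-decaying. Re-read the lemma: the bad subscript is $n$, and the bad inequality governs the transition $n \to n+1$. So since $n_i$ is bad, the \emph{first} step of the block is bad, giving $s_{n_i+1}\le 2qs_{n_i}+b$, and all subsequent steps $n_i+1\to n_i+2\to\dots\to n_{i+1}$ are good (pure multiplication by $q$). You instead put all the $q$-steps first ($s_{n_{i+1}-1}=q^{g_i-1}s_{n_i}$) and then applied the bad inequality at $n_{i+1}-1$ — but $n_{i+1}-1$ is not a bad subscript, $n_{i+1}$ is, and the bad step at $n_{i+1}$ governs $s_{n_{i+1}}\to s_{n_{i+1}+1}$, which is outside the block. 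The correct recursion is
\begin{equation*}
s_{n_{i+1}} \;=\; q^{\,g_i-1}\,s_{n_i+1} \;\le\; q^{\,g_i-1}\bigl(2q\,s_{n_i}+b\bigr) \;=\; 2q^{\,g_i}\,s_{n_i} \;+\; q^{\,g_i-1}\,b,
\end{equation*}
in which the additive term is $q^{\,g_i-1}b\to 0$ because $g_i\to\infty$. With this, for any $\eps>0$ one picks $N$ with $2q^N<1/4$ and $q^{N-1}b<\eps$; for large $i$ one has $g_i\ge N$ and hence $s_{n_{i+1}}\le \tfrac14 s_{n_i}+\eps$, so $\limsup_i s_{n_i}\le \tfrac{4\eps}{3}$, and letting $\eps\to 0$ finishes the proof. (This is exactly the paper's argument.)

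As a consequence, your proposed counterexample ($t_i$ staying near $2b$ while $g_i\to\infty$) cannot actually occur: if $s_{n_i}\approx 2b$, then $s_{n_{i+1}}\le q^{\,g_i-1}\bigl((4q+1)b\bigr)$, which forces $q^{\,g_i-1}(4q+1)\gtrsim 2$, capping $g_i$ — contradicting $g_i\to\infty$. So the lemma is correct as stated with a fixed additive constant $b>0$, and the various re-readings you entertain (summable $\delta_i$, multiplicative $b$, weakening to boundedness) are unnecessary. The rest of your analysis of the recursion $t_{i+1}\le c_i t_i+\text{(error)}$ is sound; the only gap is the misplacement of the bad step, which changes the error term from the constant $b$ to the decaying $q^{\,g_i-1}b$.
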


\begin{proof}
It suffices to show that $s_n\to 0$ as $n$ runs through all bad indices $n_1<n_2<\dots$;
fix $\eps>0$ and $N$ such that $q^N<1/8$ and $q^{N-1}b<\eps$.
Then, for $i$ large, we have $n_{i+1}-n_i\ge N$, and
$$
s_{n_{i+1}}=q^{n_{i+1}-n_i-1}(2qs_{n_i}+b)=q^{n_{i+1}-n_i}(2 s_{n_i}+q^{-1}b)\le \frac{s_{n_i}}{4}+\eps.
$$
Since the map $h(x)=x/4+\eps$ has a unique attracting point $4\eps/3$, which attracts all points of $\R$, then
$s_{n_i}$ becomes eventually less than $4\eps$.
Since $\eps>0$ is arbitrary, it follows that $s_n\to 0$ as $i\to\infty$, as desired.
\end{proof}

\subsection{Critical decoration is periodic}
Recall that we consider a fixed immediately renormalizable polynomial $f$.
Consider two $K$-rays $R=R(\al)$ and $L=R(\be)$ landing in $K^*$.
Also, take any equipotential $\wh E$ of $f$.
Let $\Delta=\Delta(R,L,\wh E)$ be the bounded complementary component of $K^*\cup R\cup L\cup \wh E$
such that the external rays that penetrate into $\Delta$ have arguments that belong to the positively oriented arc from $\al$
to $\be$.
Call such sets $\Delta$ \emph{bounded sectors}.
Evidently, $\Delta$ is the intersection of $S^\circ(\al,\be)$ with the Jordan disk enclosed by $\wh E$.
Hence results of the previous section dealing with sectors apply to $\Delta$ and similar sets.
In particular, $\arg(\Delta)$ can be defined as the set of arguments of all $K$-rays intersecting $\Delta$.
For $\Delta$ defined above, $\arg(\Delta)=(\alpha,\beta)$.

Let $\Delta'$ be an iterated pullback of $\Delta$ such that $\ol{\Delta'}\cap K^*\ne\0$; then say that $\Delta'$ is a pullback
of $\Delta$ \emph{adjacent} to $K^*$. If $\Delta'$ is an $f^n$-pullback of $\Delta$ such that $\ol{\Delta'}\cap K^*\ne\0$,
we call $\Delta'$ an \emph{iterated} pullback of $\Delta$ \emph{adjacent} to $K^*$.
Let $\Delta=\Delta_0$ and let, for every $n$, the set $\Delta_n$ be a pullback of $\Delta_{n-1}$ adjacent to $K^*$.
Then the sequence of sets $\Delta_n, n=0, 1, \dots$ is called a \emph{backward pullback orbit of
 bounded sectors adjacent to $K^*$}.
For it we define the set $\Nf=\{n_1<n_2<\dots\}$ of \emph{all} positive integers such that $\om_2\in\Delta_{n_i}$
($\Nf$ may be finite or infinite); in the notation we suppress the dependence on the sets $\{\Delta_n\}$.
By Lemma \ref{l:pb-opsec}, each $\Delta_n$ has $\arg(\Delta_n)=(\al_n,\be_n)$ for some $\al_n$ and $\be_n$.
Set $I_n=[\al_n, \be_n]$.

Set $\wt U^*$ to be the pullback of $U^*$ containing $\wt K^*$.
Recall that we consider polynomials $f\in \imr$ such that $\om_2$ is non-recurrent.

\begin{thm}\label{t:del-bnd}
Fix a bounded sector $\Delta=\Delta_0$ and a backward pullback orbit $\{\Delta_n\}$ of $\Delta$ adjacent to $K^*$.
Then one of the following holds.

\begin{enumerate}

\item The set $\Nf$ is infinite, $n_{i+1}-n_i\not \to\infty$ and there exists a number $N$ such that $n_{i+1}-n_i$ takes the same value
$N$ infinitely many times. Then the quadratic argument of $\om_2$ is $\si_2^N$-fixed, the quadratic argument of some
image of $\om_2$ belongs to $I_0$, the arcs $I_n$ are of length $|I_0|/2^n$, and $I_{n_i}$ contains $\om_2$ for any $i$.

\item The set $\{n_1<n_2<\dots\}$ is finite, and, for a number $M$ and all $m>M$,
 we have $\Delta_m\subset U^*$.

\end{enumerate}

\end{thm}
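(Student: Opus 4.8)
The plan is to track the sizes of the bounded sectors $\Delta_n$ together with the combinatorial data of their argument arcs $I_n$, and to separate the analysis into the case where $\om_2$ visits the sectors infinitely often (giving alternative (1)) versus finitely often (giving alternative (2)). First I would set up the size estimate: by Lemma~\ref{l:pb-opsec}, when $\om_2\notin\Delta_n$ the pullback $\Delta_n$ of $\Delta_{n-1}$ is a homeomorphic pullback, and since $\om_2$ is non-recurrent, Theorem~\ref{t:mane} (Ma\~n\'e) applies to give a uniform contraction: there is $q\in(0,1)$ so that the diameter of such a $\Delta_n$ is at most $q$ times the diameter of $\Delta_{n-1}$ (after covering a neighborhood of the relevant part of $K^*\cup\wt K^*$ by finitely many Ma\~n\'e neighborhoods, exactly as in the proof of Theorem~\ref{t:kcollapse}). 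When $\om_2\in\Delta_n$, the pullback has degree two, so the diameter estimate degrades to something like $2q\cdot\dia(\Delta_{n-1})+b$, where $b$ accounts for the bounded distortion near the critical value — this is precisely the setup of Lemma~\ref{l:sn}, with the bad subscripts being the elements $n_i$ of $\Nf$.

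Next I would handle the dichotomy on $\{n_i\}$. If $\Nf$ is finite, then beyond the last bad index every pullback step is a homeomorphic contraction, so $\dia(\Delta_m)\to 0$; combined with $\ol{\Delta_m}\cap K^*\ne\0$, for $m$ large $\Delta_m$ is squeezed into the tight neighborhood $U^*$ of $K^*$. (Here one should note the argument arc $\arg(\Delta_m)=(\al_m,\be_m)$ lies in $\bd(\Uf)$ eventually by Lemma~\ref{l:inv-quad}, but the essential point is just the shrinking.) That is alternative (2). If $\Nf$ is infinite: first suppose $n_{i+1}-n_i\to\infty$ — then Lemma~\ref{l:sn} forces $\dia(\Delta_{n_i})\to0$, but $\om_2\in\Delta_{n_i}$ and $\ol{\Delta_{n_i}}\cap K^*\ne\0$ would then force $\om_2$ to be arbitrarily close to $K^*$, i.e. an accumulation of the forward orbit of $\om_2$ on $K^*$; I would argue this contradicts non-recurrence together with the structure of $K^*$ (the forward orbit of $\om_2$ cannot accumulate on the QL Julia set without $\om_2$ being recurrent, or alternatively this contradicts $\om_2\notin K^*$ via the fact that the $\Delta_{n_i}$ shrink onto a point of $K^*$ while containing $\om_2$). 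Hence $n_{i+1}-n_i\not\to\infty$, so some value $N$ recurs infinitely often among the gaps $n_{i+1}-n_i$.

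Finally, with the recurrent gap value $N$ in hand, I would extract the combinatorial conclusions of alternative~(1). On each step the argument arc $I_n$ is one component of $\si_3^{-1}(I_{n-1})$, and — crucially — passing to quadratic arguments via $\tau$ (using Lemma~\ref{l:z2}, Corollary~\ref{c:2pbdec}, and the relation $\arg_2(R(3\ga))=2\arg_2(R(\ga))$) the quadratic argument of $\Delta_n$ is one $\si_2$-preimage of that of $\Delta_{n-1}$; since $\om_2\in\Delta_{n_i}$ and the quadratic argument of $\om_2$ (well-defined since $\om_2\notin K^*$ by Lemma~\ref{l:v-nin-k}, via $\arg_2(D(\om_2))$) is then carried to itself under $\si_2^{N}$ along the subsequence where the gap equals $N$, this quadratic argument is $\si_2^N$-fixed. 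The assertion that the quadratic argument of some image of $\om_2$ lies in $I_0$ follows because $\om_2\in\Delta_{n_1}$ means $f^{n_1}(\om_2)\in\Delta_0$, hence its decoration's quadratic argument is in $\arg(\Delta_0)=(\al_0,\be_0)\subset I_0$ after applying $\tau$; and $|I_n|=|I_0|/2^n$ follows because each pullback step, being a single component of $\si_3^{-1}$, halves the $\tau$-length (the $\tau$-image being a genuine $\si_2$-preimage arc). The statement $I_{n_i}\ni\om_2$ is just the defining property of $\Nf$. I expect the main obstacle to be the contradiction argument ruling out $n_{i+1}-n_i\to\infty$: one must carefully convert "$\Delta_{n_i}$ has diameter $\to0$, meets $K^*$, and contains $\om_2$" into a genuine contradiction with $\om_2\notin K^*$ and non-recurrence, which requires knowing that the $\Delta_{n_i}$ do not merely shrink but shrink \emph{onto $K^*$} rather than drifting — this needs the adjacency condition $\ol{\Delta_{n_i}}\cap K^*\ne\0$ to be used together with connectedness of $\Delta_{n_i}$ to pin $\om_2$ to within $\dia(\Delta_{n_i})$ of $K^*$.
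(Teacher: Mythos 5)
Your overall architecture matches the paper's — cover by Ma\~n\'e neighborhoods, get the recursion $s_{n+1}=qs_n$ or $s_{n+1}\le 2qs_n+b$, feed it into Lemma~\ref{l:sn}, then extract the combinatorics — but the quantity you propose to control is the wrong one, and this creates a genuine gap that you yourself half-sense at the end of your proposal.

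You claim Ma\~n\'e gives $\dia(\Delta_n)\le q\,\dia(\Delta_{n-1})$ (with the $2q\cdot+b$ degradation at bad steps), ``after covering a neighborhood of the relevant part of $K^*\cup\wt K^*$.'' This cannot work: each $\Delta_n$ is \emph{adjacent to $K^*$}, and $K^*$ cannot be covered by Ma\~n\'e neighborhoods, since it contains the non-repelling fixed point and (if $\om_1$ is recurrent) the $\omega$-limit set of $\om_1$, where Theorem~\ref{t:mane} does not apply. This is precisely why Theorem~\ref{t:kcollapse} covers only $\wt K^*$ (which is disjoint from those bad points), whereas here the sets $\Delta_n$ never detach from $K^*$. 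The paper therefore controls $\dia(\Delta_n\sm U^*)$, not $\dia(\Delta_n)$: the coverings $\Uc_n$ cover only $\Delta_n\sm U^*$, and the type-(ii) neighborhoods from $\Vc$ cover $\wt U^*$, so every tracked set stays away from $K^*$. The quantity $\dia(\Delta_n)$ itself has no reason to tend to zero.

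Once one has only $\dia(\Delta_n\sm U^*)\to 0$, both of your downstream conclusions stall. In case (2), small diameter of $\Delta_m\sm U^*$ does not by itself yield $\Delta_m\subset U^*$ — the small leftover piece could cling to $\bd(U^*)$ forever. In the contradiction argument ruling out $n_{i+1}-n_i\to\infty$, you get only that $\om_2$ (which lies in $\Delta_{n_i}\sm U^*$) is within $\dia(\Delta_{n_i}\sm U^*)\to 0$ of $\bd(U^*)$; for a fixed $U^*$ with $\om_2\notin\ol{U^*}$ that is a contradiction, but for the case (2) conclusion it is not. The paper closes both gaps with a single extra move you do not have: rerun the whole covering argument with a strictly smaller PL neighborhood $\wh U^*\subset U^*$ of $K^*$, which forces $\Delta_N\subset U^*$ at some finite stage, after which all further pullbacks adjacent to $K^*$ stay inside $U^*$ and hence cannot contain $\om_2$. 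This shrinking-$U^*$ step is exactly what converts ``shrinks near $K^*$'' into ``shrinks onto $K^*$,'' which is the obstacle you correctly flagged but did not resolve. Your combinatorial extraction in the final paragraph (the $\si_2^N$-fixed quadratic argument, $|\tau(I_n)|=|\tau(I_0)|/2^n$, the image of $\om_2$ landing in $I_0$) matches the paper and is fine.
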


\begin{proof}
Consider a finite covering $\Uc$ of $\Delta\sm U^*$ by Ma\~ne neighborhoods.
Similarly, fix a finite covering $\Vc$ of $\wt U^*$ by Ma\~ne neighborhoods such that $\bigcup\Vc=\wt U^*$.
Set $\Uc_1=\Uc$ and define $\Uc_n$ inductively as follows.
Assuming by induction that $\Delta_n\sm U^*\subset\bigcup\Uc_n$,
 define $\Uc_{n+1}$ as the set of all open sets $U$ satisfying one of the following:
\begin{enumerate}
  \item[(i)] there is $U'\in\Uc_n$ such that $U\subset\Delta_{n+1}$ is a pullback of $U'$;
  \item[(ii)] the point $\om_2$ is in $\Delta_{n+1}$, and $U\in\Vc$.
\end{enumerate}
Neighborhoods in $\Uc_{n+1}$ as in item (i) (resp., (ii)) are called \emph{type} (i) (resp., \emph{type} (ii)) neighborhoods.
Observe that properties of $\Delta_{n+1}$ as a pullback of $\Delta_n$ adjacent to $K^*$ are described in
Lemmas \ref{l:pb-opsec} and \ref{l:img-sec}.

Any $U\in\Uc_n$ is either obtained as an $f^k$-pullback of some type (i) neighborhood in $\Uc_{n-k}$
with $k$ being maximal with this property, or comes from $\Vc$ but only at the moments when $\om_2\in \Delta_n$.
In the former case set $s(U)=Cq^k$; by Theorem \ref{t:mane},
 we have $\mathrm{diam}(U)\le s(U)$. In the latter case set $s(U)=\mathrm{diam}(U)$.
Define $s_n=s_n(\Delta)$ as the sum of $s(U)$ over all $U\in\Uc_n$.
By the triangle inequality $\mathrm{diam}(\Delta_n\sm U^*)$ is bounded from above by $s_n$.

We claim that if $\Nf$ is finite or if $n_{i+1}-n_i\to \infty$ as $i\to \infty$, then
$s_n\to 0$ as $n\to\infty$. The former immediately follows from properties of neighborhoods of type (i).
This completes case (2) of the theorem.
 To deal with the latter ($n_{i+1}-n_i\to \infty$ implies $s_n\to 0$),
consider two cases of transition from $n$ to $n+1$.

(a) Assume that $\om_2\notin \Delta_{n+1}$.
Then, by Lemma \ref{l:pb-opsec}, the bounded sector
 $\Delta_{n+1}$ maps one-to-one to $\Delta_n$ and no point of $\Delta_{n+1}\sm U^*$ maps into $U^*$.
It follows that all neighborhoods in $\Uc_{n+1}$ are of type (i) and the number of neighborhoods remains the same so that
$s_{n+1}=q s_n$.

(b) Assume that $\om_2\in\Delta_{n+1}$.
Then there are at most twice as many type (i) neighborhoods in $\Uc_{n+1}$ as neighborhoods in $\Uc_n$.
Also, $\Uc_{n+1}$ includes $\Vc$.
We conclude that $s_{n+1}\le 2qs_n+
\mathrm{diam}(\wt U^*)$.

Thus, $s_n$ satisfies Lemma \ref{l:sn} with
$b=\mathrm{diam}(\wt U^*)$, and numbers $n_i$ are exactly the bad subscripts from that lemma.
By Lemma \ref{l:sn}, we have $s_{n_i}\to 0$ as $i\to\infty$.
Replacing $U^*$ with a smaller neighborhood $\widehat U^*$ of $K^*$ and repeating the same argument
for $\widehat U^*$ and the original $\Delta$ yields the existence of $N$ such that $\Delta_N\subset U^*$
from which moment on the pullbacks of $\Delta_N$ adjacent to $K^*$ are contained in $U^*$ and cannot contain
$\om_2$, a contradiction with the assumption. This proves that $n_{i+1}-n_i\not\to \infty$.

Since $n_{i+1}-n_i\not\to \infty$, the desired number $N$ exists.
Since $\Delta_{n+1}$ is an $f$-pullback of $\Delta_n$, then $\al_n=2\al_{n+1}\pmod 1$ and $\be_n=2\be_{n+1}\pmod 1$,
and the interval $I_{n+1}$ is twice shorter than $I_n$.
Evidently, $\om_2\in I_{n_i}$ for any $i$.
By the assumption, $n_{i+1}=n_i+N$ for infinitely many numbers $i$; for these numbers,
$\om_2\in\Delta_{n_i}\cap\Delta_{n_i+N}$, which implies that the quadratic argument of $\om_2$ belongs
to $I_{n_i}=\si_2(I_{n_i+N})\cap I_{n_i+N}$. Passing to the limit, we see that the quadratic argument is $\si_2^N$-fixed.
\end{proof}

\begin{lem}\label{l:all-n}
If a decoration $D$ has the quadratic argument $\arg_2(D)$
which does not belong to the orbit of a periodic quadratic argument $\arg_2(D_c)$,
then there exists $M$ such that any $f^M$-pullback of $D$ adjacent to $K^*$ is contained
in $U^*$. In particular, if $\arg_2(D_c)$ is not periodic, then this holds for any decoration $D$.
\end{lem}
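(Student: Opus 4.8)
The plan is to trap $D$ inside a bounded sector of very small ``quadratic width'', to apply Theorem~\ref{t:del-bnd} to \emph{every} backward pullback orbit of that sector, to show that its degenerate alternative (1) never occurs, and to deduce that the sectors are swallowed by $U^*$ after a number of steps that does not depend on the orbit. Since every $f^M$-pullback of $D$ adjacent to $K^*$ sits inside an $f^M$-pullback of the sector adjacent to $K^*$, this will give the lemma.

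First I would build the sector. Put $\al=\arg_2(D)$. By Theorem~\ref{t:who-land}, every (pre)periodic angle of $\Uf$ that never maps to an endpoint of $M_\Uf$ is the argument of a $K$-ray landing in $K^*$, and (as computed right before Theorem~\ref{t:who-land}) the quadratic argument of such a ray equals $\tau$ of the angle; these $\tau$-values are dense in the circle, so I can choose two such rays $R(\ga_1)$, $R(\ga_2)$ whose quadratic arguments $\tau(\ga_1)$, $\tau(\ga_2)$ lie on opposite sides of $\al$, differ from $\al$, and are as close to $\al$ as I wish. With an equipotential $\wh E$ surrounding $K=K(f)$, set $\Delta=\Delta(R(\ga_1),R(\ga_2),\wh E)$. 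Since $D\subset K$, the decoration $D$ avoids both rays, the equipotential, and (being a decoration) $K^*$; as $D$ is connected and, by Proposition~\ref{p:pends}, accumulates in $K^*$ only at the prime end $E^*(\al)$, whose representing crosscuts eventually lie in $S^\circ(\ga_1,\ga_2)$, it follows that $D\subset\Delta$ and that the quadratic argument of every decoration contained in $S^\circ(\ga_1,\ga_2)$ lies in the closed arc $J$ with endpoints $\tau(\ga_1),\tau(\ga_2)$ containing $\al$, which I may take as short as I please.

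Next I would exclude alternative (1). Fix a backward pullback orbit $\{\Delta_n\}$ of $\Delta$ adjacent to $K^*$, and let $\Nf$ be as in Theorem~\ref{t:del-bnd}. If alternative (1) held, its conclusion would force $\arg_2(D_c)=\arg_2(\om_2)$ to be fixed by some $\si_2^N$ and would place an iterated $\si_2$-image of $\arg_2(\om_2)$ inside the quadratic arc of $\Delta$, i.e.\ inside $J$. If $\arg_2(D_c)$ is not periodic the first of these already fails. If $\arg_2(D_c)$ is periodic, its $\si_2$-orbit is finite and, by hypothesis, misses $\al$, so shrinking $J$ to be disjoint from that orbit makes the second fail; moreover in this case one checks (using Lemma~\ref{l:z2}) that the quadratic arguments met along the forward orbit of $\om_2$ stay in that $\si_2$-orbit, so in fact $\Nf=\emptyset$. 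In all cases alternative (1) is impossible, so alternative (2) holds and $\Nf$ is finite.

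Finally I would make $M$ uniform and conclude. When $\Nf=\emptyset$ the recursion in the proof of Theorem~\ref{t:del-bnd} reduces to $s_{n+1}=qs_n$, so $\dia(\Delta_n\sm U^*)\le q^ns_0(\Delta)\to0$; repeating the estimate with a smaller PL neighborhood $\wh U^*\Subset U^*$, and using that each $\Delta_n$ is connected, adjacent to $K^*\subset\wh U^*$, with $\mathrm{dist}(\partial\wh U^*,\C\sm U^*)>0$, I get a number $M_0=M_0(\Delta)$, independent of the orbit, with $\Delta_n\subset U^*$ for $n\ge M_0$. In the remaining case $\Nf$ is only finite: a pullback at a non-bad index still satisfies $\dia(\Delta_n\sm U^*)\le q^ns_0(\Delta)$, so when a bad index $n_1$ is large, $\Delta_{n_1}$ lies in $U^*$ apart from a set of diameter $O(q^{n_1})$ around $\om_2$, and — since $\om_2$ is not recurrent, the components adjacent to $K^*$ of $f^{-j}(U^*)$ for bounded $j$ are relatively compact in $U^*$, while pullbacks of that tiny exceptional set stay tiny — a uniform $M=M(\Delta)$ with $\Delta_n\subset U^*$ for $n\ge M$ still exists. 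Given such an $M$, any $f^M$-pullback $E$ of $D$ adjacent to $K^*$ lies in a component $\Delta'$ of $f^{-M}(\Delta)$; since $\ol E\cap K^*\ne\emptyset$, the set $\Delta'$ is adjacent to $K^*$, and since $f^M$ is proper, $\Delta'$ is the $M$-th term of a backward pullback orbit of $\Delta$ adjacent to $K^*$, whence $E\subset\Delta'\subset U^*$. The main technical obstacle is precisely this last uniformity statement when $\Nf\ne\emptyset$ — controlling the pullbacks that pass through the critical point $\om_2$, using its non-recurrence together with Theorem~\ref{t:kcollapse}.
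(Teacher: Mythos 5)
Your proof follows the same route as the paper's: put $D$ inside a bounded sector $\Delta$ whose quadratic-argument arc is small and avoids the (finite, in the periodic case) $\si_2$-orbit of $\arg_2(D_c)$, feed every backward pullback orbit of $\Delta$ adjacent to $K^*$ into Theorem~\ref{t:del-bnd}, rule out alternative~(1), and transfer the resulting containment from $\Delta$-pullbacks to $D$-pullbacks. The paper's proof is two sentences and leaves all of this implicit; your elaboration of the sector construction (via Theorem~\ref{t:who-land}), of the inclusion $D\subset\Delta$, and especially your observation that in the periodic case $\Nf=\emptyset$ for \emph{every} pullback orbit (since $\om_2\in\Delta_n$ would force $\si_2^{n}(\arg_2(D_c))\in\ol{\tau(\arg(\Delta))}$), is exactly what makes the "one $M$ for all orbits'' uniform and is a genuine improvement in explicitness over the paper's text.

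Where your argument goes wrong is in the last paragraph, the case $\Nf\ne\emptyset$ (which can only arise in the sub-case where $\arg_2(D_c)$ is not periodic). Your statement that non-bad indices still satisfy $\dia(\Delta_n\sm U^*)\le q^n s_0(\Delta)$ is only true up to the first bad index; after $n_1$ the recursion restarts from $s_{n_1}\le 2qs_{n_1-1}+b$, so this bound fails. The follow-up claim that for large $n_1$ the set $\Delta_{n_1}$ lies in $U^*$ apart from an $O(q^{n_1})$-small piece near $\om_2$ is also off: if $n_1$ were large enough that $\Delta_{n_1-1}\subset U^*$, then the next pullback adjacent to $K^*$ would sit inside $f^{-1}(U^*)\cap U^*\subset U^*$, which excludes $\om_2$ — so $n_1$ simply cannot be large, rather than being a large index with the structure you describe. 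Even with $n_1$ bounded, nothing you say controls $\max\Nf$ uniformly over all pullback orbits, and the appeal to Theorem~\ref{t:kcollapse} and non-recurrence at the end is not an argument. To be fair, the paper's own one-line deduction "By Theorem~\ref{t:del-bnd}, the lemma follows'' glosses over the same point; you are honest in flagging it as the main obstacle. The fix in the spirit of the paper is still to force $\Nf=\emptyset$: since your bound on $n_1$ can be taken uniform over all sectors inside a fixed $\Delta_{\max}\supset D$, only finitely many $\si_2$-iterates $\si_2^n(\arg_2(D_c))$, $n\le N_1$, threaten to enter $\ol{\tau(\arg(\Delta))}$, and one shrinks $\Delta$ to avoid them — rather than trying to control the recursion with bad indices present.
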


\begin{proof}
By the assumption we can choose a bounded sector $\Delta$
 such that $\arg_2(D)\in \tau(\arg(\Delta))$,
 and $\tau(\arg(\Delta))$ is disjoint from the orbit of a periodic quadratic argument $\arg_2(D_c)$.
By Theorem \ref{t:del-bnd}, the lemma follows.
\end{proof}

All this implies Proposition \ref{p:alldec}.

\begin{prop}
  \label{p:alldec}
Every decoration $D$ is eventually mapped to $D_c$.
\end{prop}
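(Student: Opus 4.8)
The plan is to argue by contradiction and to show that a decoration whose forward orbit misses $D_c$ would have to be squeezed, along a backward orbit, into arbitrarily small neighborhoods of $K^*$, which is absurd because decorations are disjoint from $K^*$. So suppose some decoration $D$ is never mapped onto $D_c$. Since $D_c$ is the only critical decoration and, by Theorem \ref{t:decor}, every non-critical decoration maps homeomorphically onto a decoration, all the forward images $D_n=f^n(D)$, $n\ge 0$, are non-critical decorations. Using Corollary \ref{c:2pbdec} (and, if $D_n=D_v$, the last assertion of Theorem \ref{t:decor} about $D'_v$) I build a \emph{backward} orbit of decorations $D=D_0,\ D_{-1},\ D_{-2},\dots$ with $f(D_{-k-1})=D_{-k}$. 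No $D_{-k}$ equals $D_c$: any pullback $E$ of a decoration $D'$ satisfies $f(E)=D'$ by Lemma \ref{l:deconto}, whereas $f(D_c)=K(f)$ is not a decoration. Hence no $D_{-k}$ contains $\om_2\in D_c$, so each $f^k\colon D_{-k}\to D_0$ is a homeomorphism; and each $D_{-k}$ accumulates in $K^*$ by Lemma \ref{l:triv}, i.e.\ it is adjacent to $K^*$.

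Next I would enclose $D_0$ in a bounded sector $\Delta_0$: take two $K$-rays landing in $K^*$ that straddle the way $D_0$ accumulates in $K^*$, with an argument arc so short that $\tau(\arg\Delta_0)$ is disjoint from the (finite) forward $\si_2$-orbit of $\arg_2(D_c)$ whenever that orbit is periodic and $\arg_2(D_0)$ does not lie on it. Track the backward orbit by bounded sectors: let $\Delta_k$ be the pullback of $\Delta_{k-1}$ adjacent to $K^*$ that contains $D_{-k}$ (it exists, since $D_{-k}\subset f^{-1}(D_{-k+1})\subset f^{-1}(\Delta_{k-1})$ is connected and adjacent to $K^*$). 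Apply Theorem \ref{t:del-bnd} to $\{\Delta_k\}$. Its alternative (1) would force the quadratic argument of $\om_2$ — which is $\arg_2(D_c)$, as $\om_2\in D_c$ — to be $\si_2^N$-fixed and some image of it to fall into $\tau(\arg\Delta_0)$; this is excluded by the choice of $\Delta_0$ (and excluded outright when $\arg_2(D_c)$ is not periodic). So alternative (2) holds, and rerunning the proof of Theorem \ref{t:del-bnd} with an arbitrary neighborhood $\Omega\supseteq K^*$ in place of $U^*$ (precisely the replacement performed at the end of that proof) shows that for \emph{every} neighborhood $\Omega$ of $K^*$ one has $D_{-k}\subset\Delta_k\subset\Omega$ for all large $k$.

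This already settles the case where the forward orbit $\{D_n\}$ contains a periodic decoration $D'$ (necessarily $D'\ne D_c$, since $f(D_c)=K(f)$): carrying out the construction for $D'$ and using multiples of its period $p$ as pullback times exhibits $D'$ itself as an $f^{mp}$-pullback of $D'$ adjacent to $K^*$, whence $D'\subset\Omega$ for every neighborhood $\Omega$ of $K^*$, so $D'\subseteq K^*$ — contradicting $D'\cap K^*=\emptyset$. For the general case I would reduce to this one. Since $\arg_2(D_n)=\si_2^n(\arg_2(D))$, if the sequence of these quadratic arguments is preperiodic then infinitely many $D_n$ share a common periodic quadratic argument; and since the pullback depth $M$ in Lemma \ref{l:all-n} comes from Theorem \ref{t:del-bnd} applied to backward pullback orbits of a single bounded sector, $M$ can be taken to depend only on the quadratic arc $\tau(\arg\Delta)$, not on the individual decoration — which again forces $D\subseteq K^*$ by the periodic mechanism. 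In the remaining possibility, $\{\si_2^n(\arg_2(D))\}$ is infinite and avoids $\arg_2(D_c)$; here the plan is to combine the second-paragraph squeezing with the relation $\psi^*(D_{-k})^{2^{\,k-M}}=\psi^*(D_{-M})$ (valid because $D_{-k},\dots,D_{-M}$ all stay in $U^*$), which gives $\mathrm{diam}\,\psi^*(D_{-k})\to 0$, i.e.\ $D_{-k}$ degenerating onto $K^*$, and then confront this with the homeomorphism $f^k\colon D_{-k}\to D_0$.

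\medskip
\noindent\textbf{Main obstacle.} The delicate point is the last step: turning the backward squeezing of the orbit into $K^*$ into a genuine contradiction when the quadratic arguments never stabilize. Equivalently, the work is to extract from Lemma \ref{l:all-n} and Theorem \ref{t:del-bnd} a bound on the pullback depth that is \emph{uniform} over all decorations with a fixed quadratic argument, so that the clean periodic-decoration argument — which is itself immediate — applies across all cases.
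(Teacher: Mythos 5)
Your approach is genuinely different from the paper's, and the difference matters because your plan has a real gap (one you partially flag yourself). You argue by contradiction about a single ``escaping'' decoration $D$: build a backward orbit $D_{-k}$ through $D$, enclose it in a backward pullback orbit of bounded sectors, and invoke Theorem~\ref{t:del-bnd} to squeeze it into $K^*$. The paper avoids tracking any escaping orbit at all: it defines $K_d$ as the union of $K^*$ with \emph{all} decorations that do eventually hit $D_c$, checks that $K_d$ is backward invariant (an easy consequence of Lemma~\ref{l:deconto} and Theorem~\ref{t:decor}), uses Lemma~\ref{l:all-n} to show that $K_d$ is compact, and observes that $K_d$ is a full continuum; a full compact backward-invariant continuum inside $K$ that contains a non-exceptional point must equal $K$ (the backward orbit of a repelling point of $K^*$ is dense in $J(f)$ and lies in the closed set $K_d$, so $J(f)\subset K_d$, and fullness then gives $K\subset K_d$). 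This inversion is the essential trick: only decorations \emph{in} $K_d$, i.e.\ iterated pullbacks of $D_c$, need to be controlled, and that is exactly the family Lemma~\ref{l:all-n} handles.

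Two gaps in your proposal. (a) The aperiodic case is not closed. Even granting $\dia(\psi^*(D_{-k}))\to 0$, nothing clashes with $f^k\colon D_{-k}\to D_0$ being a homeomorphism: decorations accumulate on $K^*$ (Lemma~\ref{l:triv}), so a sequence of them with shrinking diameter is perfectly legal, and the maps $f^k$ send them forward onto the one fixed set $D_0$, producing no contradiction. The ``uniform bound on pullback depth over decorations with a fixed quadratic argument'' you ask for would, in substance, prove the compactness of $K_d$, which is the heart of the paper's argument — so this is not a small missing detail but the content of the proof. (b) Your construction of $\Delta_0$ requires $\tau(\arg(\Delta_0))$ to miss the $\si_2$-orbit of $\arg_2(D_c)$, hence requires $\arg_2(D_0)$ (or $\arg_2(D')$ in the periodic case) to lie off that orbit; but distinct decorations may share a quadratic argument (this is stated explicitly right after Definition~\ref{d:qarg}), so $D$ never reaching $D_c$ does not preclude $\arg_2(D_n)=\arg_2(D_c)$ for some $n$, in which case the $\Delta_0$ you want does not exist and even your periodic-case argument does not start.
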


\begin{proof}
Consider the union $K_d$ of $K^*$ and all decorations that are eventually mapped to $D_c$.
We claim that the set $K_d$ is backward invariant. Indeed, take any decoration $D\subset K_d$.
Then any $f$-pullback of $D$ is either a decoration in $K_d$ or a subset of $D_c$.
Thus, $f^{-1}(K_d)\subset K_d$.

Now, suppose that the quadratic argument $\arg_2(D)=\ga$ of $D$ does not belong to
a periodic orbit of $\arg_2(D_c)$.
Then, by Lemma \ref{l:all-n}, for any $U^*$, the $f^n$-pullbacks
 of $D$ adjacent to $K^*$ will be contained in $U^*$ for any $n>M_D(U^*)$,
 where $M_D(U^*)$ depends on $D$ and $U^*$.
Therefore, the set $K_d$ is compact as a union of $K^*$ and a sequence of sets
that are closed in $\C\sm K^*$ and accumulate to $K^*$. Observe that if $\arg_2(D_c)$ is
periodic, the sets in $K_d$ are decorations with periodic arguments from the
$\si_2$-orbit of $\arg_2(D_c)$, or decorations with non-periodic arguments that are
 iterated preimages of $\arg_2(D_c)$.

Clearly, $K_d$ is a full continuum. To sum it all up, $K_d$ is a full subcontinuum of
$K$, which is backward invariant. It follows that $K_d=K$, which completes the proof.
\end{proof}

The next lemma specifies properties of the gap $\Uf$.

\begin{lem}\label{l:perio-type}
If $\om_2$ is non-recurrent, then $\Uf$ is of periodic type.
\end{lem}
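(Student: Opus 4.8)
The plan is to reduce the statement to the assertion that the quadratic argument $\arg_2(D_c)$ of the critical decoration is $\si_2$-periodic, and to derive the latter from non-recurrence of $\om_2$. The reduction is combinatorial. By Lemma \ref{l:inv-quad}, $\Uf$ is of regular critical or of periodic type, and by Lemma \ref{l:cridi3} we have $\arg_2(D_c)=\tau(M_\Uf)$. I claim $\tau(M_\Uf)$ is $\si_2$-periodic if and only if $M_\Uf$ is $\si_3$-periodic, i.e. iff $\Uf$ is of periodic type. The implication ``$\Leftarrow$'' is immediate because $\tau$ semiconjugates $\si_3|_{\bd\Uf}$ to $\si_2$. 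For ``$\Rightarrow$'', assume $M_\Uf=\ol{\al_\Uf\be_\Uf}$ is critical, so $\si_3(\al_\Uf)=\si_3(\be_\Uf)=:c'$, and $\si_2^k(\tau(M_\Uf))=\tau(M_\Uf)$. Since $\tau$ is monotone, $M_\Uf$ is the only $\tau$-fibre over $\tau(M_\Uf)$; applying $\tau$ to $\si_3^k(\al_\Uf)=\si_3^{k-1}(c')$ therefore gives $\si_3^k(\al_\Uf)\in\{\al_\Uf,\be_\Uf\}$, so one endpoint of $M_\Uf$ is $\si_3$-periodic. But for a quadratic $\si_3$-invariant gap this is impossible: using $|I_\Uf|\ge 1/3$ and $\si_3(\al_\Uf)=\si_3(\be_\Uf)$, the $\si_3$-orbit of such an endpoint together with $M_\Uf$ would leave a hole of $\Uf$ strictly longer than the major hole $I_\Uf$, contradicting its maximality (cf. the classification of quadratic $\si_3$-gaps in \cite{BOPT}). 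Hence it suffices to show $\arg_2(D_c)$ is $\si_2$-periodic.

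Suppose, for contradiction, that $\arg_2(D_c)$ is not $\si_2$-periodic. Then Lemma \ref{l:all-n} applies to every decoration, and Proposition \ref{p:alldec} says that every decoration is eventually mapped onto $D_c$; apply this to the critical value decoration $D_v$ and iterate. Tracking the decoration $D(f^n(\om_2))$, either (a) the orbit of $\om_2$ is eventually absorbed into an iterated pullback of $K^*$, which forces it first through the unique non-$K^*$ preimage component $\wt K^*$ of $K^*$ (recall $f^{-1}(K^*)=K^*\cup\wt K^*$); chasing the decoration of $v_2$ through Lemma \ref{l:z2} and Corollary \ref{c:2pbdec} — for instance $v_2\in\wt K^*$ would force $D_v=D_c$, hence $\arg_2(D_c)=2\arg_2(D_c)$ in $\R/\Z$ and so $\arg_2(D_c)=0$ — yields a relation $\si_2^j(\arg_2(D_c))=\arg_2(D_c)$ for some $j\ge 1$, contradicting non-periodicity. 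Or (b) $f^{n_i}(\om_2)\in D_c$ for infinitely many $n_i$. In case (b), fix a bounded sector $\Delta=\Delta_0$ with $\om_2\in\Delta_0$ and $\tau(\arg(\Delta_0))$ a small neighbourhood of $\arg_2(\om_2)$, and a backward pullback orbit $\{\Delta_n\}$ of $\Delta_0$ adjacent to $K^*$. Since $\arg_2(\om_2)=\arg_2(D_c)$ is not $\si_2$-periodic, the first alternative of Theorem \ref{t:del-bnd} is excluded, so the second holds: $\Delta_m\subset U^*$ for all large $m$; because $V^*$ was taken very tight (so $\om_2\notin U^*$), this gives $f^m(\om_2)\notin\Delta_0$ for all large $m$. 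On the other hand, from the infinitely many returns $f^{n_i}(\om_2)\in D_c$, the fixed positive diameter of $\wt K^*\subset D_c$, and the description of $D_c$ near $\wt K^*$ in Theorem \ref{t:decor}, one concludes that $f^{n_i}(\om_2)\in\Delta_0$ for infinitely many $i$ — a contradiction. Thus $\arg_2(D_c)$ is $\si_2$-periodic, and by the reduction $\Uf$ is of periodic type.

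The main obstacle is case (b). Proposition \ref{p:alldec} controls only the itinerary of \emph{decorations}, whereas what is needed is control of the single point $\om_2$: that its orbit actually re-enters the fixed sector $\Delta_0$ infinitely often, rather than merely that its decoration revisits $D_c$ while the point itself drifts towards $\ol{D_c}\cap K^*$. Upgrading decoration-level recurrence to point-level recurrence into $\Delta_0$ requires the finer structure of $D_c$ near $\wt K^*$ from Theorem \ref{t:decor} (the $\om_2$-containing pullback of $D_v$ accumulates in both $K^*$ and $\wt K^*$) together with the null-sequence property of the iterated pullbacks of $K^*$ (Theorem \ref{t:kcollapse}); keeping track of which pullback branch the orbit of $\om_2$ follows at each step is the delicate part.
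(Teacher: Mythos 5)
Your overall plan is the right one and matches the paper's: show that $\arg_2(D_c)$ is $\si_2$-periodic and then use Lemma \ref{l:cridi3} to transfer periodicity to $M_\Uf$. But your execution of the main step takes a wrong turn, and the gap you yourself flag (case (b)) is real; the good news is that the whole case analysis is unnecessary.

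The paper's argument is entirely at the level of decorations, not points. By Proposition \ref{p:alldec} (which is available here because we are assuming $\om_2$ is non-recurrent), the critical value decoration $D_v$ eventually maps to $D_c$: there is $m\ge 0$ minimal with $f^m(D_v)=D_c$. Now apply Lemma \ref{l:z2}: by the second sentence of that lemma, $\arg_2(D_v)=2\arg_2(D_c)$, and since each $f^j(D_v)$ with $0\le j<m$ is a decoration distinct from $D_c$, the first sentence gives $\arg_2(f^{j+1}(D_v))=2\arg_2(f^j(D_v))$. Chaining these, $\arg_2(D_c)=\arg_2(f^m(D_v))=2^m\arg_2(D_v)=2^{m+1}\arg_2(D_c)$, so $\arg_2(D_c)$ is $\si_2$-periodic. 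That is the whole content of the step. No information about the orbit of the \emph{point} $\om_2$ is required; in particular, one need not worry whether $f^n(\om_2)$ ever enters a pullback of $K^*$, nor whether the orbit of $\om_2$ re-enters a fixed bounded sector $\Delta_0$. Your attempt to upgrade ``$D(f^n(\om_2))$ revisits $D_c$'' to ``$f^{n_i}(\om_2)\in\Delta_0$'' is precisely what does not follow and is not needed: Proposition \ref{p:alldec} already says what you need about $D_v$ directly, and Lemma \ref{l:z2} converts this into a periodicity relation for $\arg_2(D_c)$ purely combinatorially.

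Two further remarks. First, there is no need to argue by contradiction (``suppose $\arg_2(D_c)$ is not periodic''); the direct argument above is shorter. Second, your reduction in the first paragraph — deducing $\si_3$-periodicity of $M_\Uf$ from $\si_2$-periodicity of $\tau(M_\Uf)$ — correctly identifies the dichotomy from Lemma \ref{l:inv-quad} and the monotonicity of $\tau$, and correctly concludes that an endpoint of a critical $M_\Uf$ would have to be $\si_3$-periodic; but the final step (``the orbit would leave a hole strictly longer than $I_\Uf$'') is asserted rather than proved, and as written it is not clear it yields a contradiction. You should either cite the relevant structural statement from \cite{BOPT} explicitly or give a clean argument; the paper treats this transfer as immediate from Lemma \ref{l:cridi3}, implicitly relying on the classification in \cite{BOPT}.
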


\begin{proof}
Consider the critical value decoration $D_v$.
By Proposition \ref{p:alldec}, $D_v$ eventually maps back to $D_c$.
It follows that the quadratic argument $\arg_2(D_c)$ is periodic.
Therefore, $\Uf$ is of periodic type by Lemma \ref{l:cridi3}.
\end{proof}

\subsection{Major hole defines a cut}
\label{ss:majcut}
In Section \ref{ss:majcut}, we complete the proof of Theorem \ref{t:recur}.
We assume that $\om_2$ is not recurrent, which implies, by Lemma \ref{l:perio-type},
 that $\Uf$ has periodic type.
Let $(\al_\Uf,\be_\Uf)$ be the major hole of $\Uf$, and let $k$ be the minimal $\si_3$-period of $\al_\Uf$.

\begin{lem}
  \label{l:al-land}
  Both rays $R(\al_\Uf)$ and $R(\be_\Uf)$ land in $K^*$.
\end{lem}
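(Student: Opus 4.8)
The plan is to use the periodic type of $\Uf$ to locate the landing points of $R(\al_\Uf)$ and $R(\be_\Uf)$ inside $K^*$.

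First I would note that, since $\Uf$ is of periodic type, the major $M_{\Uf}$ is $\si_3$-periodic, hence $\al_\Uf$ and $\be_\Uf$ are $\si_3$-periodic (of period $k$) and $R(\al_\Uf)$, $R(\be_\Uf)$ are periodic external rays, each landing at a repelling periodic point, say $z_\al$ and $z_\be$. Neither of these can lie in $\wt K^*$, nor in any proper iterated pullback of $K^*$: such a continuum is carried by a positive iterate of $f$ homeomorphically onto the forward invariant set $K^*$ and is disjoint from it (Lemma \ref{l:pull} and the surrounding discussion), so it cannot contain a periodic point. Thus each of $z_\al$, $z_\be$ lies either in $K^*$ (as desired) or in the part of some decoration that is not in the grand orbit of $K^*$, and the whole matter reduces to excluding the latter possibility.

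To get a foothold, put $\theta=\tau(\al_\Uf)=\tau(\be_\Uf)$; since $\tau$ semiconjugates $\si_3|_{\bd(\Uf)}$ with $\si_2$, the angle $\theta$ is $\si_2$-periodic, so the $K^*$-ray $R^*(\theta)$ is $f^*$-periodic and lands at a repelling periodic point $x\in K^*$. Running the argument from the proof of Theorem \ref{t:who-land} (via the Main Theorem of \cite{bot21}), there is a $K$-ray landing at $x$ and homotopic to $R^*(\theta)$ rel.\ $K^*$; its argument $\ga$ satisfies $\arg_2(R(\ga))=\theta=\tau(\ga)$, and since $\tau$ collapses exactly the closed major hole $[\al_\Uf,\be_\Uf]$ to $\theta$ while $\ga\in\Uf$ avoids the open major hole, we get $\ga\in\{\al_\Uf,\be_\Uf\}$. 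So at least one of the two rays already lands at $x\in K^*$ (and, by the same general fact $\arg_2(R(\ga))=\tau(\ga)$, if the other one lands in $K^*$ it must land at $x$ as well).

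It remains to upgrade this to \emph{both} rays, i.e.\ to rule out that, say, $z_\be$ lies in a decoration outside the grand orbit of $K^*$. Here I would combine two facts: by Proposition \ref{p:alldec} every decoration is eventually carried to $D_c$, while by Theorem \ref{t:decor} the decoration $D_c$ maps onto all of $K(f)$ and hence no decoration is periodic; therefore the forward orbit of $z_\be$ must pass through $D_c$, and after replacing $z_\be$ by a point of its orbit we may assume $z_\be\in D_c\sm\wt K^*$ is periodic. Feeding this into the structure of $D_c$ from Theorem \ref{t:decor} — the sub-decoration of $D_c$ carrying $\om_2$ maps two-to-one onto $D_v$ and accumulates in $\wt K^*$, everything else in $D_c$ maps one-to-one onto a decoration — together with $\arg_2(D_c)=\tau(M_{\Uf})=\theta$ (Lemma \ref{l:cridi3}) should force $z_\be$ back into the grand orbit of $K^*$, a contradiction; equivalently, in Carath\'eodory terms, one is showing that $D_c$ is the only obstruction separating the accesses of $R(\al_\Uf)$ and $R(\be_\Uf)$ to the prime end $E^*(\theta)$ of $\C\sm K^*$, so that both are homotopic to $R^*(\theta)$ rel.\ $K^*$ and both land at $x$. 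Making this separation statement precise — in effect, that $\ol{D_c}\cap K^*$ is the single point $x$ — is the step I expect to be the main obstacle.
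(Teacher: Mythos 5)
Your proposal follows a genuinely different route from the paper, and it contains a gap that you yourself flag at the end. You correctly observe (Steps~1--4) that, writing $\theta=\tau(\al_\Uf)=\tau(\be_\Uf)$ and letting $x$ be the landing point of $R^*(\theta)$, the Main Theorem of \cite{bot21} supplies a $K$-ray landing at $x$ homotopic to $R^*(\theta)$ rel.\ $K^*$; combined with Lemma~\ref{l:inv-quad} and the identity $\arg_2(R(\ga))=\tau(\ga)$, this forces the argument of that ray into $\{\al_\Uf,\be_\Uf\}$. That step is sound, but it only produces \emph{one} of the two rays landing in $K^*$. The key remaining task --- showing the \emph{other} ray of the pair also lands in $K^*$ --- is exactly where your argument stalls. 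Your attempted Step~5 via Proposition~\ref{p:alldec} and Theorem~\ref{t:decor} would require knowing that $\ol{D_c}\cap K^*$ is a single point (equivalently, that the impression of the prime end $E^*(\theta)$ accumulated by $D_c$ is degenerate); this is not something the preceding machinery gives you, and in fact Lemma~\ref{l:triv} only says that $\ol{D_c}\cap K^*$ is a continuum, possibly nondegenerate. Since you explicitly identify this as the ``main obstacle'' without resolving it, the proof is incomplete.

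The paper avoids this entirely by arguing about each ray separately with a hyperbolic-contraction (``snail-type'') argument: assuming $R(\al_\Uf)$ lands at some $a\notin K^*$, one builds a domain $W$ bounded by $R(\al_\Uf)$, a nearby (pre)periodic ray $R(\al)$ that is known to land in $K^*$ by Theorem~\ref{t:who-land}, and a piece of $K$; the branch $g$ of $f^{-k}$ that carries $W$ into itself is a hyperbolic contraction, yet it simultaneously attracts some points of $W$ to $a$ and other points to $K^*$, which contradicts boundedness of hyperbolic distances under $g^n$. That argument is local to a single ray and never needs information about the other endpoint of the major, which is precisely what your approach lacked. If you want to pursue your route you would need to prove the degeneracy of the impression (or an equivalent statement that $R(\al_\Uf)$ and $R(\be_\Uf)$ represent the two accesses to $x$ bounding $D_c$), but note that this is essentially the content of the subsequent Theorem~\ref{t:sameland}, so you would be working out of order relative to the paper's logic.
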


\begin{proof}
  It is enough to prove the claim for $R(\al_\Uf)$.
Assume the contrary: $R(\al_\Uf)$ lands on a point $a\notin K^*$.
Choose a preperiodic argument $\al$ in $\Uf$ sufficiently close to $\al_\Uf$
that is not an (eventual) preimage of $\al_\Uf$ or $\be_\Uf$.
By Theorem \ref{t:who-land}, the ray $R(\al)$ lands in $K^*$.
Define $W$ as the complementary component of $R(\al)\cup R(\al_\Uf)\cup K$ containing
 all rays with arguments in $(\al,\al_\Uf)$.
There is a unique univalent $f^k$-pullback $W_1$ of $W$ that is contained in $W$.
In fact, $W_1$ is bounded by $R(\al_1)$, $R(\al_\Uf)$, and a part of $K$,
 where $\si_3^k(\al_1)=\al$ and $\al<\al_1<\al_\Uf$.
Let $g:W\to W_1$ be the inverse of $f^k:W_1\to W$.

Note that $a$ attracts all nearby points of $W$ under the iterates of $g$.
That is, $g^n(x)\to a$ for any $x\in W$ sufficiently close to $a$.
Indeed, observe that $W$ coincides with $W_1$ near $a$, and, for this reason,
 the local inverse of $f^k$ near $a$ coincides with $g$.
On the other hand, there are points of $W$ that converge to $K^*$ under the iterates of $g$.
To see that, it suffices to take any point of $R(\al_1)$ that lies in $U^*$
and use the definition of a PL set.

Take two points $x$, $y\in W$ such that $g^n(x)\to K^*$ and $g^n(y)\to a$ as $n\to\infty$.
On the one hand, $g:W\to W$ is a hyperbolic contraction, hence the sequence
 $\mathrm{dist}_W(g^n(x),g^n(y))$ is bounded, where $\mathrm{dist}_W$ means the hyperbolic distance in $W$.
On the other hand, as $x_n$ and $y_n$ converge to different boundary points of $W$,
 the distance $\mathrm{dist}_W(x_n,y_n)$ tends to infinity.
In particular, passing to subsequences, we see that $\mathrm{dist}_W(g^n(x),g^n(y))$ is unbounded,
 a contradiction.
\end{proof}

Theorem \ref{t:sameland} implies Theorem \ref{t:recur}.

\begin{thm}
  \label{t:sameland}
  The rays $R(\al_\Uf)$ and $R(\be_\Uf)$ land on the same point.
\end{thm}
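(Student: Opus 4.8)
The plan is to argue by contradiction. Assume that $R(\al_\Uf)$ lands at a point $p$ and $R(\be_\Uf)$ lands at a point $p'$ with $p\ne p'$; by Lemma~\ref{l:al-land} both $p$ and $p'$ lie in $K^*$. Since $\om_2$ is not recurrent, $\Uf$ is of periodic type (Lemma~\ref{l:perio-type}), so the major $M_\Uf$, and hence each of the angles $\al_\Uf$, $\be_\Uf$, is $\si_3$-periodic; consequently $R(\al_\Uf)$ and $R(\be_\Uf)$ are periodic $K$-rays and $p$, $p'$ are (repelling or parabolic) periodic points of $f$ contained in $K^*$.

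Next I would pass to the quadratic side of the picture. Recall from the discussion after Lemma~\ref{l:cridi3} that $\arg_2(R(\ga))=\tau(\ga)$ for every $K$-ray $R(\ga)$ landing in $K^*$; since $\tau$ collapses the edge $M_\Uf$, we get $\arg_2(R(\al_\Uf))=\arg_2(R(\be_\Uf))=\tau(\al_\Uf)=:\theta$, and $\theta$ is $\si_2$-periodic because $\tau$ semiconjugates $\si_3$ with $\si_2$. Thus $R(\al_\Uf)$ and $R(\be_\Uf)$ are non-disjoint from one and the same prime end $E^*(\theta)$ of $\C\sm K^*$ and from no other; in the $\psi^*$-picture their tails accumulate at the single point $z_\theta\in\uc$ from the two sides of the channel of $E^*(\theta)$ (the side of angles slightly below $\al_\Uf$ and the side of angles slightly above $\be_\Uf$, all lying in $\Uf$). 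Let $x^*\in K^*$ be the landing point of the $K^*$-ray $R^*(\theta)$, so that $p$, $p'$ and $x^*$ all lie in the impression of $E^*(\theta)$. Finally, by Lemma~\ref{l:inv-quad} the arguments of all $K$-rays landing in $K^*$ lie in $\Uf$, and $\al_\Uf$, $\be_\Uf$ are the only $\tau$-preimages of $\theta$ in $\Uf$.

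The key step is to conclude that $p=x^*=p'$. For this I would invoke the Main Theorem of \cite{bot21}, exactly as in the proof of Theorem~\ref{t:who-land}: a $K$-ray with quadratic argument $\theta$ landing in $K^*$ is homotopic to $R^*(\theta)$ rel.\ $K^*$, and any $K$-ray landing at $x^*$ homotopic to $R^*(\theta)$ rel.\ $K^*$ has an argument with $\tau$-image $\theta$, hence in $\{\al_\Uf,\be_\Uf\}$. The content I need is that this correspondence captures \emph{both} rays $R(\al_\Uf)$ and $R(\be_\Uf)$ — they are precisely the two $K$-rays flanking the prime end $E^*(\theta)$ — so both of them must land at $x^*$, forcing $p=p'$ and contradicting our assumption. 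An alternative, self-contained route: the impression of the periodic prime end $E^*(\theta)$ is an $f^k$-invariant continuum in $\bd K^*$ containing the periodic points $p$, $p'$ and $x^*$; showing that such an impression contains at most one periodic point — via a hyperbolic-contraction argument in the spirit of Lemma~\ref{l:al-land}, ruling out an invariant continuum that joins two distinct repelling/parabolic periodic points while staying inside the prime-end channel — again yields $p=p'$.

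I expect this last step to be the main obstacle: since $K^*$ need not be locally connected, the impression of $E^*(\theta)$ may a priori be a nondegenerate continuum, and one must argue that the single periodic $K^*$-ray $R^*(\theta)$ nonetheless accounts for the landing of \emph{both} flanking $K$-rays, not merely one of them. Once $p=p'$ is established, the two periodic $K$-rays $R(\al_\Uf)$ and $R(\be_\Uf)$ land together, forming a rational cut of $f$; in the setting of Theorem~\ref{t:recur} the rational lamination of $f$ is empty, so this is impossible, and therefore the assumption that $\om_2$ is not recurrent must fail — this is how Theorem~\ref{t:sameland} implies Theorem~\ref{t:recur}.
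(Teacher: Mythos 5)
Your first route, via the Main Theorem of \cite{bot21}, has exactly the gap you flag: that theorem supplies \emph{some} $K$-ray landing at $x^*$ homotopic to $R^*(\theta)$ rel.\ $K^*$, and the computation $\arg_2 R(\ga)=\tau(\ga)$ forces its argument to be $\al_\Uf$ or $\be_\Uf$; but nothing in that reasoning captures \emph{both} rays. A priori the theorem could be realized by only one of the two, with the other accumulating somewhere else in the (possibly nondegenerate) impression of $E^*(\theta)$. So this route is not a proof without a further argument, which is precisely the content of the theorem.

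Your second route is the right idea in spirit but is aimed at the wrong target. You propose to show that the impression of $E^*(\theta)$ contains at most one periodic point --- this is both a stronger statement than needed and not directly accessible by a contraction argument, because impressions are not open regions in which one controls a hyperbolic metric. What the paper actually does is simpler and avoids prime ends entirely at this stage: assume $R(\al_\Uf)$, $R(\be_\Uf)$ land at $a\ne b$, let $W$ be a component of $S^\circ(\al_\Uf,\be_\Uf)\cap U^*$ with both $a,b\in\bd(W)$, and observe (using periodicity of $\al_\Uf$ of exact period $k$ and that $U^*$ is a QL neighborhood) that there is a univalent $f^k$-pullback $W_1\subset W$. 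The inverse branch $g\colon W\to W_1$ is then a hyperbolic contraction of $W$ into itself, and since one can pick $x,y\in W$ with $g^n(x)\to a$, $g^n(y)\to b$, the bounded hyperbolic distance $\mathrm{dist}_W(g^n x,g^n y)$ contradicts $a\ne b\in\bd W$. This is the same hyperbolic-contraction mechanism you cite from Lemma \ref{l:al-land}, but run inside a concrete open region $W$ rather than on an impression; the non--local-connectivity worries you raise then simply never arise.
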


\begin{proof}
By Lemma \ref{l:al-land}, both $R(\al_\Uf)$ and $R(\be_\Uf)$ land in $K^*$.
Assume by way of contradiction that the landing point $a$ of $R(\al_\Uf)$ is different from
 the landing point $b$ of $R(\be_\Uf)$.
Let $W$ be a component of $S^\circ(\al_\Uf,\be_\Uf)\cap U^*$ such that $a$, $b\in\bd(W)$.
Clearly, such $W$ exists, and a univalent $f^k$-pullback $W_1$ of $W$ is a subset of $W$.
The rest of the proof is a hyperbolic distance argument similar to that used in the proof of Lemma \ref{l:al-land}.
Namely, choose points $x$, $y\in W$ so that $g^n(x)\to a$ and $g^n(y)\to b$.
By hyperbolic contraction of $g$, the hyperbolic distance (w.r.t. $W$) between $g^n(x)$ and $g^n(y)$ is bounded.
On the other hand, it must diverge to infinity since $a\ne b$, a contradiction.
\end{proof}

In Theorem \ref{t:recur}, we assume that $f$ has no periodic cuts.
Yet, as we have just proved (see Theorem \ref{t:sameland}), if $\om_2$ is non-recurrent, then $f$ must have a periodic cut.
This contradiction proves Theorem \ref{t:recur}.

\end{document}